\newlist{enumerata}{enumerate}{2}
\setlist[enumerata,1]{label=\protect\normalfont{(\alph*)},ref=\protect\normalfont{\alph*}}
\setlist[enumerata,2]{label=\protect\normalfont{(\roman*)},ref=\protect\normalfont{\roman*}}
\newlist{enumerati}{enumerate}{1}
\setlist[enumerati,1]{label=\protect\normalfont{(\roman*)}, ref=\protect\normalfont{\roman*}}
\newcommand{\Pscr}{\mathscr{P}}
\newtheorem{theorem}{Theorem}[section]
\newtheorem{proposition}[theorem]{Proposition}
\newtheorem{lemma}[theorem]{Lemma}
\newtheorem{corollary}[theorem]{Corollary}
\theoremstyle{definition}
\newtheorem{definition}[theorem]{Definition}
\newtheorem{parag}[theorem]{}
\newtheorem{example}[theorem]{Example}
\newtheorem{notation}[theorem]{Notation}
\newtheorem{remark}[theorem]{Remark}
\theoremstyle{remark}
\newtheorem*{smallremark}{Remark}
\newcommand{\image}{	\operatorname{{\rm im}}}
\newcommand{\Spl}{	\operatorname{{\rm Spl}}}
\newcommand{\ENSpl}{	\operatorname{{\rm ENSpl}}}
\newcommand{\Edz}{	\operatorname{{\rm Edz}}}
\newcommand{\EN}{\operatorname{{\rm EN}}}
\newcommand{\DT}{	\operatorname{{\rm\bf DT}}}
\newcommand{\DTr}{	\operatorname{{\rm\bf DT}_{\text{\rm r}}}}
\newcommand{\DTpr}{	\operatorname{{\rm\bf DT}_{\text{\rm pr}}}}
\newcommand{\DTprp}{	\operatorname{{\rm\bf DT}^*_{\text{\rm pr}}}}
\newcommand{\DTpru}{	\operatorname{{\rm\bf DT}^1_{\text{\rm pr}}}}
\newcommand{\Cent} {\operatorname{{\rm Cent}}}
\newcommand{\bX}{\mathbf{X}}
\newlength{\mylength}
\newcommand{\sgn}{\operatorname{{\rm sgn}}}
\newcommand{\setspec}[2]{\big\{\,#1\, \mid \,#2\, \big\}}
\newcommand{\Integ}{\ensuremath{\mathbb{Z}}}
\newcommand{\Nat}{\ensuremath{\mathbb{N}}}
\newcommand{\Comp}{\ensuremath{\mathbb{C}}}
\newcommand{\Reals}{\ensuremath{\mathbb{R}}}
\newcommand{\bbA}{\ensuremath{\mathbb{A}}}
\newcommand{\bbB}{\ensuremath{\mathbb{B}}}
\newcommand{\bbX}{\ensuremath{\mathbb{X}}}
\newcommand{\Dgoth}{{\ensuremath{\mathfrak{D}}}}
\newcommand{\Ggoth}{{\ensuremath{\mathfrak{G}}}}
\newcommand{\Mgoth}{{\ensuremath{\mathfrak{M}}}}
\newcommand{\Aeul}{\EuScript{A}}
\newcommand{\Ceul}{\EuScript{C}}
\newcommand{\Eeul}{\EuScript{E}}
\newcommand{\Oeul}{\EuScript{O}}
\newcommand{\Seul}{\EuScript{S}}
\newcommand{\Teul}{\EuScript{T}}
\newcommand{\Veul}{\EuScript{V}}
\renewcommand{\epsilon}{\varepsilon}
\renewcommand{\phi}{\varphi}
\renewcommand{\emptyset}{\varnothing}
\newcommand{\rien}[1]{}
\begin{document}

\title{Decorated trees}

\author{Pierrette Cassou-Nogu\`es}
\address{Univ.\ Bordeaux, CNRS, Bordeaux INP, IMB, UMR 5251,  F-33400, Talence, France}
\email{Pierrette.Cassou-nogues@math.u-bordeaux.fr}
\thanks{Research of the first author partially supported by  Spanish grants PID2020-114750GB-C31 (Zaragoza) and PID2020-114750GB-C32 (Madrid).}

\author{Daniel Daigle}
\address{Department of Mathematics and Statistics\\ University of Ottawa\\ Ottawa, Canada K1N 6N5}
\email{ddaigle@uottawa.ca}

\date{\today}

\subjclass[2010]{Primary 14Q05, 14H20, 14B05, 52-08, 05E14.}

\keywords{Tree, decorated tree, rooted tree, genus of a tree, $\delta$-invariant of a tree,
singularities, Newton tree, Eisenbud and Neumann diagrams.}

\begin{abstract}
We study a class of combinatorial objects that we call ``decorated trees''.
These consist of vertices, arrows and edges, 
where each edge is decorated by two integers (one near each of its endpoints), each arrow is decorated by an integer,
and the decorations are required to satisfy certain conditions.
The class of decorated trees includes different types of trees used in algebraic geometry,
such as the Eisenbud and Neumann diagrams for links of singularities
and the Neumann diagrams for links at infinity of algebraic plane curves.
By purely combinatorial means, we recover some formulas that were previously understood to be ``topological''.
In this way, we extend the generality of those formulas and show that they are in fact ``combinatorial''.
\end{abstract}

\maketitle

\vfuzz=2pt


\section*{Introduction}

We study a class of combinatorial objects that we call ``decorated trees''.
These consist of vertices, arrows and edges, 
where each edge is decorated by two integers (one near each of its endpoints), each arrow is decorated by an integer,
and the decorations are required to satisfy certain conditions.
The class of decorated trees includes different types of trees used in algebraic geometry,
such as the Eisenbud and Neumann diagrams for links of singularities \cite{Eisenbud_Neumann_1985}
and the Neumann diagrams for links at infinity of algebraic plane curves \cite{Neumann:LinksInfinity}.
By purely combinatorial means, we recover some formulas that were previously understood to be ``topological''.
In this way, we extend the generality of those formulas and show that they are in fact ``combinatorial''.
This article is used in a subsequent work \cite{CND:memoir} in preparation by the two authors, as well as in \cite{ArtalBartoloCassouNogues2024}.

The first two sections are mostly dedicated to definitions.
In the first section we define what is a decorated tree $\Teul$,
its multiplicity $M(\Teul) \in \Integ$ and a number $F(\Teul) \in \Integ$
(in the special case where all arrows of $\Teul$ are decorated by $(0)$ or $(1)$, $F(\Teul)$ is simply the number of arrows in $\Teul$ decorated with $(1)$).
Then we describe some operations of simplification that do not change the values of $M(\Teul)$ and $F(\Teul)$
(the simpler trees produced by these operations are called ``minimal trees'' in the context of singularities).
The second section explains Splitting and ENsplitting, which are operations for breaking a tree into two pieces.
The splitting operations allow the use of induction and are very useful tools.

The remaining sections consider formulas that are known for singularities and algebraic curves, and prove them in a purely combinatorial way and for more general trees.

Section $3$  proves that $M(\Teul)+F(\Teul)$ is even, which (for example) recovers the fact that in characteristic zero the Milnor number of an irreducible singularity is even. 

In section $4$,  we define the genus and the delta-invariant of a decorated tree $\Teul$ by 
$$
\textstyle g(\Teul) = \frac12 ( 2 - M(\Teul) - F(\Teul) )
\quad \text{and} \quad
\textstyle \delta(\Teul) = \frac12 ( F(\Teul) - M(\Teul) ) ,
$$
and we prove formulas that compute these invariants in terms of subtrees with fewer arrows.
We first prove the case where all the arrows are decorated with $(0)$ or $(1)$ and then do the general case.
Thm \ref{8374te23nfAiUoMnbed53oifn} and Prop.\ \ref{9837487239rpejf9d88}(a)
are used in particular in \cite{ArtalBartoloCassouNogues2024} to prove that if $f(x,y)\in K[[x,y]]$ with $K$ of arbitrary characteristic, then the $\delta$-invariant of its Newton tree is equal to the $\delta$-invariant of the singularity. 

Section $5$ is more technical and section $6$ proves a genus formula for a large class of decorated trees,
which recovers the known formula for algebraic plane curves in characteristic zero.

In addition to giving the results mentioned in the above paragraphs,
the present work also serves as the basis for further development of the theory of decorated trees,
in particular in a work in progress \cite{CND:memoir} by the two authors. 
Consequently, some of the material is elaborated to a greater degree than would be necessary if our goal was only to prove the main results of the present article.

\medskip

\noindent{\bf Conventions. }
We write ``$\setminus$'' for set difference, ``$\subset$'' for strict inclusion and  ``$\subseteq$'' for general inclusion.
We follow the convention that $0 \in \Nat$.


\section{Definition of decorated trees}
\label {Sec:PreliminariesAbstractNewtontreesatinfinity}

We begin by some terminologies and notations for general graphs and trees.

CAUTION: our use of the word ``vertex'' differs from the standard usage of graph theory (see \ref{97863cws981bode9}--\ref{pco9v0239jd0OiiJq0wjd}).

\begin{parag} \label {97863cws981bode9}
In this work, a {\it graph\/} is a pair $X = (X_0,X_1)$ where $X_0$ and $X_1$ are finite sets and each element
of $X_1$ is a subset of $X_0$ of cardinality exactly $2$. The elements of $X_1$ are called the {\it edges},
and those of $X_0$ are called {\it $0$-dimensional cells\/} (we do NOT use the word ``vertex'' here!).
If $x \in X_0$ and $e \in X_1$ are such that $x \in e$, we say that the edge $e$ is {\it incident\/} to $x$.
If $x \in X_0$, the number of edges incident to $x$ is called the {\it valency\/} of $x$ and is denoted $\delta_{x}$.
If $x,y \in X_0$ are such that $\{x,y\}$ is an edge, we say that $x,y$ are {\it adjacent},
or that $x,y$ are \textit{linked by an edge}. If $e=\{x,y\}$ is an edge then $x,y$ are the {\it endpoints\/} of $e$.

A {\it path\/} in the graph $X=(X_0,X_1)$ is an ordered tuple $(x_0,\dots,x_n)$ of elements of $X_0$ satisfying
$n\ge0$ and the two conditions:
\begin{itemize}

\item if $n\ge1$ then for each $i \in \{0,\dots,n-1\}$ we have $\{ x_i, x_{i+1} \} \in X_1$;
\item if $n\ge2$ then the edges $\{x_0,x_1\}$, \dots, $\{x_{n-1},x_n\}$ are distinct.

\end{itemize}
Note that what we call a ``path'' is called a ``simple path'' in standard terminology of graph theory.
Given $x,y \in X_0$, a {\it path from $x$ to $y$} is a path $(x_0,\dots,x_n)$ satisfying 
$x_0=x$ and $x_n=y$.

Suppose that $\gamma = (x_0,\dots,x_n)$ is a path. 
We say that a $0$-dimensional cell $x \in X_0$ ``is in $\gamma$'' if $x \in \{ x_0,\dots,x_n \}$;
we say that an edge $e \in X_1$ ``is in $\gamma$'' if $e$ is one of the edges
$\{x_0,x_1\}$, \dots, $\{x_{n-1},x_n\}$.
If $\xi \in X_0 \cup X_1$ and $\xi$ is in a path $\gamma$, we also say that $\gamma$ \textit{contains} $\xi$.

A graph $X = (X_0,X_1)$ is a {\it tree\/} if for every choice of $x,y \in X_0$ there exists a unique path from $x$ to $y$.
If $X$ is a tree and $x,y \in X_0$ then
\begin{center}
the unique path from $x$ to $y$ is denoted $\gamma_{x,y}$.
\end{center}
\end{parag}

\begin{definition} \label {cJioI0qobfoWvvnxzhb83edfAiE7oqnkl}
A \textit{decorated tree} is a $5$-tuple $\Teul = (\Veul,\Aeul,\Eeul,f,q)$ satisfying
\begin{enumerati}
\item $\Veul$ and $\Aeul$ are finite sets, $\Veul \cap \Aeul = \emptyset$, and 
the pair $(\Veul \cup\Aeul,\Eeul)$ is a tree in the sense of paragraph \ref{97863cws981bode9};
\item $\delta_\alpha = 1$ for all $\alpha \in \Aeul$ ($\delta_\alpha$ is the valency of $\alpha$ in the tree $(\Veul\cup\Aeul,\Eeul)$);
\item $f : \Aeul \to \Integ$ is a map;
\item $q : \setspec{ (e,x) \in \Eeul \times (\Veul \cup \Aeul) }{ x \in e } \to \Integ$ is a map;

\item \label {9823r89vb281283a} if $e \in \Eeul$, $\alpha \in \Aeul$ and $\alpha \in e$, then $q(e,\alpha)=1$; 

\item \label {CDT-ie20udf01}
for every $v \in \Veul$ and every choice of distinct edges $e,e'$ incident to $v$, we have\linebreak
\mbox{$\gcd\big( q(e,v), q(e',v) \big) = 1$.}

\end{enumerati}
The set of decorated trees is denoted $\DT$.
Given $\Teul = (\Veul,\Aeul,\Eeul,f,q) \in \DT$,
the elements of $\Veul$ and $\Aeul$ are called, respectively, the \textit{vertices} and \textit{arrows} of $\Teul$.
If $(e,x) \in \Eeul \times (\Veul\cup\Aeul)$ and $x \in e$ then the integer $q(e,x)$ is called {\it the decoration of $e$ near $x$}
(so each edge has two decorations, one near each of its endpoints).
If $\alpha \in \Aeul$ then the integer $f(\alpha)$ is called {\it the decoration of $\alpha$.}
Define
$$
\Aeul_0 = \setspec{ \alpha \in \Aeul }{ f(\alpha) = 0 } .
$$
An edge incident to an element of $\Aeul_0$ is called a {\it dead end.}

The notations of \ref{97863cws981bode9} are valid for decorated trees:
if $x \in \Veul \cup \Aeul$ then $\delta_x$ denotes the valency of $x$ in the tree $(\Veul\cup\Aeul,\Eeul)$;
if $x,y \in \Veul \cup \Aeul$ then $\gamma_{x,y}$ is the unique path from $x$ to $y$.
\end{definition}

\begin{parag}   \label {pco9v0239jd0OiiJq0wjd}
{\bf Pictures.}
In pictures, vertices are represented by
``\begin{picture}(2,1)(-1,-.5) \put(0,0){\circle{1}} \end{picture}''
and arrows are represented by arrowheads
``\begin{picture}(2,1)(-1,-.5) \put(0.5,0){\vector(1,0){0}} \end{picture}'' (not by arrows ``\begin{picture}(5,1)(0,-.5) \put(0,0){\vector(1,0){5}} \end{picture}'').
So ``\begin{picture}(5.5,1)(-.5,-.5) \put(0,0){\circle{1}} \put(0.5,0){\vector(1,0){4.5}} \end{picture}''
represents an edge joining a vertex 
``\begin{picture}(2,1)(-1,-.5) \put(0,0){\circle{1}} \end{picture}''
to an arrow
``\begin{picture}(2,1)(-1,-.5) \put(0.5,0){\vector(1,0){0}} \end{picture}''.
If $\alpha \in \Aeul$ then the decoration of $\alpha$ (i.e., the integer $f(\alpha)$) is enclosed between parentheses.
Part (a) of the following picture is an example of a decorated tree $\Teul = (\Veul,\Aeul,\Eeul,f,q)$:
\begin{equation}  \label {dpd99238gve912R2tw}
\text{\rm (a)} \ \ \raisebox{-6mm}{\begin{picture}(38,14)(-2,-7)
\put(0,0){\circle{1}}
\put(20,0){\circle{1}}
\put(.5,0){\line(1,0){19}}
\put(20.4472,.2236){\vector(2,1){10}}
\put(20.4472,-.2236){\vector(2,-1){10}}

\put(0,-1){\makebox(0,0)[t]{\tiny $u$}}
\put(19,-1){\makebox(0,0)[t]{\tiny $v$}}
\put(31,6){\makebox(0,0)[l]{\tiny $(1)$}}
\put(31,-6){\makebox(0,0)[l]{\tiny $(0)$}}
\put(30,3.5){\makebox(0,0)[t]{\tiny $\alpha_1$}}
\put(30,-4){\makebox(0,0)[b]{\tiny $\alpha_2$}}

\put(2,1){\makebox(0,0)[b]{\tiny $-2$}}
\put(17,1){\makebox(0,0)[b]{\tiny $3$}}

\put(28,4.5){\makebox(0,0)[br]{\tiny $1$}}
\put(23,2){\makebox(0,0)[br]{\tiny $1$}}
\put(23,-2){\makebox(0,0)[tr]{\tiny $2$}}
\put(28,-4.5){\makebox(0,0)[tr]{\tiny $1$}}

\end{picture}}
\qquad
\text{\rm (b)}\ \ \raisebox{-6mm}{\begin{picture}(38,14)(-2,-7)
\put(0,0){\circle{1}}
\put(20,0){\circle{1}}
\put(.5,0){\line(1,0){19}}
\put(20.4472,.2236){\vector(2,1){10}}
\put(20.4472,-.2236){\vector(2,-1){10}}

\put(0,-1){\makebox(0,0)[t]{\tiny $u$}}
\put(19,-1){\makebox(0,0)[t]{\tiny $v$}}
\put(31,-6){\makebox(0,0)[l]{\tiny $(0)$}}
\put(30,3.5){\makebox(0,0)[t]{\tiny $\alpha_1$}}
\put(30,-4){\makebox(0,0)[b]{\tiny $\alpha_2$}}

\put(2,1){\makebox(0,0)[b]{\tiny $-2$}}
\put(17,1){\makebox(0,0)[b]{\tiny $3$}}

\put(23,-2){\makebox(0,0)[tr]{\tiny $2$}}

\end{picture}}
\end{equation}
with $\Veul = \{ u, v \}$, $\Aeul = \{ \alpha_1, \alpha_2 \}$,
$\Eeul = \{ \{u, v\}, \{v, \alpha_1\}, \{v,\alpha_2\} \}$, 
$f(\alpha_1) = 1$, 
$f(\alpha_2) = 0$, 
$q(\{u,v\},u) = -2$, 
$q(\{u,v\},v) = 3$, 
$q(\{v,\alpha_1\},\alpha_1) = 1$, etc. 
We have $\Aeul_0 = \{ \alpha_2 \}$, and the edge $\{v,\alpha_2\}$ is a dead end.
We have $\delta_v=3$ and $\delta_u = \delta_{\alpha_1} = \delta_{\alpha_2} = 1$.

We use the following convention to simplify pictures:
{\it it is not necessary to display the decorations of edges that are equal to $1$ and the decorations of arrows that are equal to $(1)$.}
Thus, the tree \eqref{dpd99238gve912R2tw}(a) can be simplified to the one depicted in \eqref{dpd99238gve912R2tw}(b).
\end{parag}

\begin{definition}
Let $\Teul = (\Veul,\Aeul,\Eeul, f,q) \in \DT$.
Given $x \in \Veul \cup \Aeul$, let $E_x$ temporarily denote the set of all edges incident to $x$.
For each $e \in E_x$, define 
$Q(e,x) = \prod_{e' \in E_x \setminus \{e\}} q(e',x)$.
{\bf Here and throughout this paper, empty products are equal to \boldmath $1$.}
This defines a map
$$
Q : \setspec{ (e,x) \in \Eeul \times (\Veul \cup \Aeul) }{ x \in e } \to \Integ .
$$
Note that the maps $q$ and $Q$ have the same domain and same codomain, and that $Q$ is determined by $q$.
Also note that if $x \in \Veul \cup \Aeul$ satisfies $\delta_x=1$, and if $e$ is the edge incident to $x$, then $Q(e,x)=1$ because $Q(e,x)$ is an empty product.

For instance, consider the edges $e = \{u,v\}$, $e' = \{v,\alpha_1\}$ and $e''=\{v,\alpha_2\}$ in the tree $\Teul$ shown in
\eqref{dpd99238gve912R2tw}(a), in paragraph \ref{pco9v0239jd0OiiJq0wjd}.
Then $Q(e,v) = 2$ and (since $Q(e,u)$ is an empty product) $Q(e,u)=1$.
Also, $Q(e',v) = 6$ and $Q(e',\alpha_1)=1=Q(e'',\alpha_2)$.
\end{definition}

\begin{definition}  \label {defdetedgej923hb912edl}  
Let $\Teul \in \DT$.
If $e=\{x,y\}$ is an edge in $\Teul$, we define the {\it determinant of $e$} by 
$$
\det(e) = q(e,x)q(e,y) - Q(e,x)Q(e,y) .
$$
\end{definition}

\begin{definition} \label {c9v39rf0eX9e4np8glr9t8}
Let $\Teul = (\Veul,\Aeul,\Eeul,f,q) \in \DT$.
\begin{enumerati}

\item
We say that an edge $\epsilon$ is {\it incident\/} to a path $\gamma$ if $\epsilon$ is not in $\gamma$
and $\epsilon$ is incident to some vertex of $\gamma$.
If $\epsilon$ is incident to $\gamma$,
we define $q(\epsilon,\gamma) = q(\epsilon,u)$ where $u$ is the unique vertex of $\gamma$ to which
$\epsilon$ is incident.

\item Given $v \in \Veul \cup \Aeul$ and $\alpha \in \Aeul$ such that $v \neq \alpha$, we set
$$
\textstyle
x_{v,\alpha} = f(\alpha) \prod_{\epsilon \in E} q(\epsilon,\gamma_{v,\alpha})
\quad \text{and} \quad
\hat x_{v,\alpha} = f(\alpha) \prod_{\epsilon \in \hat E} q(\epsilon,\gamma_{v,\alpha})
$$
where $E$ is the set of edges incident to $\gamma_{v,\alpha}$ and
$\hat E$ is the set of edges incident to $\gamma_{v,\alpha}$ but not incident to $v$.
Observe that $x_{v,\alpha} = Q(e,v) \hat x_{v,\alpha}$, where $e$ is the unique edge in $\gamma_{v,\alpha}$ incident to $v$.
Also note that if $\alpha \in \Aeul_0$ then $x_{v,\alpha} = 0 = \hat x_{v,\alpha}$.

\item Given $v\in \Veul\cup \Aeul_0$, we define the {\it multiplicity $N_v$ of $v$} by
$$
\textstyle N_v
= \sum_{ \alpha \in \Aeul \setminus \Aeul_0} x_{v,\alpha} 
= \sum_{ \alpha \in \Aeul \setminus\{v\} } x_{v,\alpha} .
$$

\item  We define the {\it multiplicity\/} $M(\Teul)$ of $\Teul$ by
$$
\textstyle M(\Teul) = -\sum_{v \in \Veul\cup \Aeul_0} N_v(\delta_v-2) .
$$

\item Given $u \in \Veul \cup \Aeul$ and an edge $e$ incident to $u$, define:
\begin{itemize}

\item $\Aeul(u,e) = \setspec{ \alpha \in \Aeul \setminus \Aeul_0 }{ \text{$e$ is in $\gamma_{u,\alpha}$} }$

\item $\Aeul^*(u,e) = (\Aeul \setminus \Aeul_0) \setminus \Aeul(u,e)
=\setspec{ \alpha \in \Aeul \setminus \Aeul_0 }{ \text{$e$ is not in $\gamma_{u,\alpha}$} }$

\item $p(u,e) = \sum_{ \alpha \in \Aeul(u,e) } \hat x_{ u,\alpha }$

\item $p^*(u,e) = \sum_{ \alpha \in \Aeul^*(u,e) } \hat x_{ v,\alpha } = p(v,e)$ where  $v$ is defined by $e = \{u,v\}$.

\end{itemize}

\item For each $\alpha \in \Aeul \setminus \Aeul_0$ we define $F(\alpha) = \gcd\big( f(\alpha), p(\alpha,e_\alpha) \big)$,
where $e_\alpha$ denotes the unique edge incident to $\alpha$.

\item Define $F(\Teul) = \sum_{ \alpha \in \Aeul \setminus \Aeul_0 } F(\alpha)$.

\end{enumerati}
\end{definition}


\begin{remark}  \label {52retfxcxntgBmA98WjmHkkj498736762}
Let $\Teul \in \DT$.
If $e = \{x,y\}$ is an edge with $x \in \Veul \cup \Aeul_0$ and $y \in \Veul \cup \Aeul$, then 
$$
N_x = Q(e,x) p(x,e) + q(e,x) p^*(x,e) = Q(e,x) p(x,e) + q(e,x) p(y,e) .
$$
\end{remark}

\begin{remark}  \label {DdjkfblowicblwFT983764t1}
Let $\Teul \in \DT$.
For all $\alpha \in \Aeul \setminus \Aeul_0$ we have $F(\alpha) \in \Nat\setminus \{0\}$,
and $F(\alpha)=1$ whenever $f(\alpha)=\pm1$.
It follows that if $f( \Aeul ) \subseteq \{-1,0,1\}$ then $F(\Teul) = | \Aeul \setminus \Aeul_0 |$.
\end{remark}


\begin{parag}   \label {CROCHETpco9v0239jd0OiiJq0wjd}
{\bf Pictures.}
It is well known that vertices $v \in \Veul$ satisfying $N_v=0$ play a special role in the theory.
To make it easier to find them in a picture, we sometimes  represent them by ``\begin{picture}(2,1)(-1,-.5) \put(0,0){\circle*{1}} \end{picture}''
(as opposed to ``\begin{picture}(2,1)(-1,-.5) \put(0,0){\circle{1}} \end{picture}'').
To simplify pictures, we also use the following convention.
If $n \in \Nat$ and $v$ is a vertex satisfying $N_v=0$ then
{\setlength{\unitlength}{1.5mm}
$$
\textit{$\raisebox{.5mm}{\begin{picture}(14,1)(-.9,-.5)
\put(0,0){\circle*{1}}
\put(0,1){\makebox(0,0)[b]{\footnotesize $v$}}
\put(0.5,0){\line(1,0){6.5}}
\put(6.7,0.05){\makebox(0,0)[l]{\footnotesize $<\! n$}}
\end{picture}}$ is an abbreviation for}\qquad
\scalebox{.8}{\raisebox{-6\unitlength}{\begin{picture}(21,12.5)(-.9,-6.5)
\put(-1,0){\makebox(0,0)[r]{\small $v$}}
\put(0,0){\circle*{1}}
\put(0.447,0.224){\vector(2,1){10}}
\put(0.447,-0.224){\vector(2,-1){10}}
\put(7,.6){\makebox(0,0)[c]{\tiny $\vdots$}}
\put(11,5){\makebox(0,0)[l]{\tiny $(1)$}}
\put(11,-5){\makebox(0,0)[l]{\tiny $(1)$}}
\put(2,2.4){\makebox(0,0)[rd]{\tiny $1$}}
\put(7,5){\makebox(0,0)[rd]{\tiny $1$}}
\put(1,-2.4){\makebox(0,0)[ld]{\tiny $1$}}
\put(7,-4.7){\makebox(0,0)[ru]{\tiny $1$}}
\put(15,0){\rotatebox{90}{\makebox(0,0){$\displaystyle\underbrace{\rule{12\unitlength}{0mm}}$}}}
\put(16.5,0){\makebox(0,0)[l]{\small $n$}}
\end{picture}}} .
$$
That is, {\begin{picture}(11,1)(-.9,-.5)
\put(0,0){\circle*{1}}
\put(0,1){\makebox(0,0)[b]{\footnotesize $v$}}
\put(0.5,0){\line(1,0){6.5}}
\put(6.7,0.05){\makebox(0,0)[l]{\footnotesize $<\! n$}}
\end{picture}}}
replaces $n$ edges $e_i = \{v,\alpha_i\}$ ($1 \le i \le n$) where, for each $i$,
we have $\alpha_i \in \Aeul$, $f(\alpha_i)=1$ and $q(e_i,v)=1$.
For instance, the tree in \eqref{8754bcbnnsjdbceg3746t6487gd}(a) belongs to $\DT$, and  \eqref{8754bcbnnsjdbceg3746t6487gd}(b) shows a 
different picture of the same tree, using the above convention:
\begin{equation}  \label {8754bcbnnsjdbceg3746t6487gd}
\text{\rm (a)} \ \ \scalebox{.8}{\raisebox{-6mm}{\begin{picture}(45,14)(-13,-7)
\put(5,0){\circle{1}}
\put(20,0){\circle*{1}}
\put(5.5,0){\line(1,0){14}}
\put(20.4472,.2236){\vector(2,1){10}}
\put(20.5,0){\vector(1,0){13}}
\put(20.4472,-.2236){\vector(2,-1){10}}

\put(4.5,0){\vector(-1,0){13}}
\put(-9,0){\makebox(0,0)[r]{\tiny $(3)$}}
\put(-5,1){\makebox(0,0)[b]{\tiny $1$}}

\put(31,6){\makebox(0,0)[l]{\tiny $(1)$}}
\put(34,0){\makebox(0,0)[l]{\tiny $(1)$}}
\put(31,-6){\makebox(0,0)[l]{\tiny $(2)$}}

\put(3,1){\makebox(0,0)[b]{\tiny $1$}}
\put(7,1){\makebox(0,0)[b]{\tiny $0$}}
\put(17,1){\makebox(0,0)[b]{\tiny $-1$}}

\put(28,4.5){\makebox(0,0)[br]{\tiny $1$}}
\put(23,2){\makebox(0,0)[br]{\tiny $1$}}
\put(25,.5){\makebox(0,0)[b]{\tiny $1$}}
\put(31,.5){\makebox(0,0)[b]{\tiny $1$}}
\put(23,-2){\makebox(0,0)[tr]{\tiny $2$}}
\put(28,-4.5){\makebox(0,0)[tr]{\tiny $1$}}

\end{picture}}}
\qquad\quad
\text{\rm (b)} \ \ \scalebox{.8}{\raisebox{-6mm}{\begin{picture}(45,14)(-13,-7)
\put(5,0){\circle{1}}
\put(20,0){\circle*{1}}
\put(5.5,0){\line(1,0){14}}
\put(20.5,0){\line(1,0){13}}
\put(20.4472,-.2236){\vector(2,-1){10}}

\put(4.5,0){\vector(-1,0){13}}
\put(-9,0){\makebox(0,0)[r]{\tiny $(3)$}}
\put(-5,1){\makebox(0,0)[b]{\tiny $1$}}

\put(33.2,0.1){\makebox(0,0)[l]{\tiny $< 2$}}
\put(31,-6){\makebox(0,0)[l]{\tiny $(2)$}}

\put(3,1){\makebox(0,0)[b]{\tiny $1$}}
\put(7,1){\makebox(0,0)[b]{\tiny $0$}}
\put(17,1){\makebox(0,0)[b]{\tiny $-1$}}

\put(23,-2){\makebox(0,0)[tr]{\tiny $2$}}
\put(28,-4.5){\makebox(0,0)[tr]{\tiny $1$}}

\end{picture}}}
\end{equation}
We may further simplify the picture by using the convention of \ref{pco9v0239jd0OiiJq0wjd}, as follows:
$$
\text{\rm (c)} \ \ \scalebox{.8}{\raisebox{-6mm}{\begin{picture}(45,14)(-13,-7)
\put(5,0){\circle{1}}
\put(20,0){\circle*{1}}
\put(5.5,0){\line(1,0){14}}
\put(20.5,0){\line(1,0){13}}
\put(20.4472,-.2236){\vector(2,-1){10}}

\put(4.5,0){\vector(-1,0){13}}
\put(-9,0){\makebox(0,0)[r]{\tiny $(3)$}}

\put(33.2,0.1){\makebox(0,0)[l]{\tiny $< 2$}}
\put(31,-6){\makebox(0,0)[l]{\tiny $(2)$}}

\put(7,1){\makebox(0,0)[b]{\tiny $0$}}
\put(17,1){\makebox(0,0)[b]{\tiny $-1$}}

\put(23,-2){\makebox(0,0)[tr]{\tiny $2$}}

\end{picture}}}
$$
We stress that (a), (b) and (c) are pictures of {\it the same tree},
and that the valency of the vertex  ``\begin{picture}(2,1)(-1,-.5) \put(0,0){\circle*{1}} \end{picture}'' in (b) and (c) is $4$, not $3$.
\end{parag}

\begin{example}  \label {894Y3yhbQwc34eeruGVfef8}
The following picture of a decorated tree $\Teul \in \DT$ uses the conventions of both \ref{pco9v0239jd0OiiJq0wjd} and \ref{CROCHETpco9v0239jd0OiiJq0wjd}:
$$
\scalebox{.85}{\begin{picture}(60,26.5)(-11,-18)
\put(0,0){\circle*{1}}
\put(15,0){\circle{1}}
\put(30,0){\circle{1}}
\put(0,0){\line(-1,0){8.0}} \put(-7.5,0.13){\makebox(0,0)[r]{\tiny $3 >$}}
\put(0,-.5){\vector(0,-1){9.5}}
\put(1,-9){\makebox(0,0)[l]{\tiny $(0)$}}
\put(2,.5){\makebox(0,0)[bl]{\tiny $-1$}}
\put(22.5,-1){\makebox(0,0)[t]{\tiny $e$}}
\put(31,2){\makebox(0,0)[b]{\tiny $2$}}
\put(14.5,0){\line(-1,0){14}}
\put(29.5,0){\line(-1,0){14}}
\put(30.35355,0.35355){\line(1,1){8}}
\put(30.35355,-0.35355){\line(1,-1){8}}
\put(38,8){\circle*{1}}
\put(38,7.5){\vector(0,-1){9.5}}
\put(39,-1){\makebox(0,0)[l]{\tiny $(0)$}}
\put(36,6){\makebox(0,0)[rb]{\tiny $-4$}}
\put(38,8){\line(1,0){8.0}} \put(45.7,8.07){\makebox(0,0)[l]{\tiny $< 1$}}
\put(38,-8){\line(1,0){8.0}} \put(45.7,-7.94){\makebox(0,0)[l]{\tiny $< 1$}}
\put(38,-8){\circle*{1}}
\put(38,-8.5){\vector(0,-1){9.5}}
\put(39,-17){\makebox(0,0)[l]{\tiny $(0)$}}
\put(35.5,-6){\makebox(0,0)[rt]{\tiny $-7$}}
\end{picture}}
$$
(One of the edges is called ``$e$'', because Ex.\ \ref{8f790hbYBiD9485thBdfdcy82fjx3sBhNt87} refers to that edge.)
Note that there are eight arrows in this tree: three are decorated by $(0)$ and five are decorated by $(1)$.
The reader may verify that the vertices $v$ that satisfy $N_v=0$ are precisely those that are represented by
 ``\begin{picture}(2,1)(-1,-.5) \put(0,0){\circle*{1}} \end{picture}'' (as explained in \ref{CROCHETpco9v0239jd0OiiJq0wjd}).
\end{example}

\begin{remark} \label {es5s6xjc17lcy4l}
Let $e=\{v,\alpha\}$ be a dead end, where $v \in \Veul$ and $\alpha \in \Aeul_0$.  Then 
$$
N_v = q(e,v) N_{\alpha}.
$$
To see this, simply observe that $x_{v,\beta}=q(e,v)x_{\alpha,\beta}$ for all $\beta \in \Aeul \setminus \Aeul_0$.
\end{remark}

\begin{lemma} \label {pc09vvh230e9dfq99912we2e3}
Let $\Teul \in \DT$ and let $e = \{v,\alpha\}$ be an edge with  $v \in \Veul$ and $\alpha \in \Aeul \setminus \Aeul_0$.
If $N_v = 0$, then $q(e,v) \mid f(\alpha)$.
\end{lemma}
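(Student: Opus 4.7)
The plan is to combine the decomposition of $N_v$ given in Remark \ref{52retfxcxntgBmA98WjmHkkj498736762} with the coprimality condition \ref{CDT-ie20udf01} on the decorations at $v$. The key observation I would make first is that, because $\alpha$ has valency $1$, the set $\Aeul(v,e)$ is as small as possible.

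More precisely, the first step is to verify that $\Aeul(v,e) = \{\alpha\}$. Indeed, if $\beta \in \Aeul \setminus \Aeul_0$ and the edge $e = \{v,\alpha\}$ lies on $\gamma_{v,\beta}$, then the path must enter $\alpha$ via $e$; but $\delta_\alpha = 1$ forces the path to terminate there, so $\beta = \alpha$. Consequently $p(v,e) = \hat{x}_{v,\alpha}$, and since the only edges incident to the one-edge path $\gamma_{v,\alpha} = (v,\alpha)$ are edges at $v$ (again using $\delta_\alpha = 1$), the set $\hat E$ in the definition of $\hat{x}_{v,\alpha}$ is empty. Hence $p(v,e) = f(\alpha)$ (an empty product being $1$).

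Next, I would invoke Remark \ref{52retfxcxntgBmA98WjmHkkj498736762} to write
\[
N_v \;=\; Q(e,v)\,p(v,e) + q(e,v)\,p^*(v,e) \;=\; Q(e,v)\,f(\alpha) + q(e,v)\,p^*(v,e).
\]
The hypothesis $N_v = 0$ then gives $q(e,v) \,p^*(v,e) = -Q(e,v)\,f(\alpha)$, so in particular $q(e,v) \mid Q(e,v)\,f(\alpha)$.

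The final step, and the only place anything nontrivial happens, is to eliminate the $Q(e,v)$ factor. By definition $Q(e,v) = \prod_{\epsilon \in E_v \setminus \{e\}} q(\epsilon,v)$, and condition \ref{CDT-ie20udf01} in the definition of a decorated tree guarantees that $\gcd\bigl(q(e,v), q(\epsilon,v)\bigr) = 1$ for every edge $\epsilon \neq e$ incident to $v$. Therefore $\gcd\bigl(q(e,v), Q(e,v)\bigr) = 1$, and the divisibility $q(e,v) \mid f(\alpha)$ follows. I do not anticipate any real obstacle here; the main thing to be careful about is correctly identifying $p(v,e)$ as $f(\alpha)$, which is exactly where the valency-$1$ condition on $\alpha$ is used.
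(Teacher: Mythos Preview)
Your proof is correct and follows essentially the same approach as the paper: decompose $N_v$ into a term $Q(e,v)f(\alpha)$ plus a multiple of $q(e,v)$, then use the pairwise coprimality of the decorations at $v$ to strip off $Q(e,v)$. The only cosmetic difference is that you invoke Remark~\ref{52retfxcxntgBmA98WjmHkkj498736762} directly, whereas the paper rederives the decomposition by hand (and separately disposes of the degenerate case $\delta_v=1$, which your formulation handles automatically since then $Q(e,v)=1$ and $p^*(v,e)=0$).
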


\begin{proof}
If $e$ is the only edge incident to $v$ then $0 = N_v = x_{v,\alpha} = f(\alpha) \neq 0$, a contradiction.
So $e$ is not the only edge incident to $v$.
Let $e,e_1,\dots,e_n$ ($n\ge1$) be the distinct edges incident to $v$, let $y=q(e,v)$, and for each $i \in \{1, \dots, n\}$, let
$y_i = q(e_i,v)$. 
For each $\beta \in (\Aeul \setminus \Aeul_0) \setminus\{\alpha\}$, we have $y \mid x_{v,\beta}$;
so $N_v = y_1\cdots y_n f(\alpha) + y z$ for some $z \in \Integ$.
As $N_v=0$, $y$ divides $y_1\cdots y_n f(\alpha)$.
Since $y,y_1,\dots,y_n$ are pairwise relatively prime by \ref{cJioI0qobfoWvvnxzhb83edfAiE7oqnkl}\eqref{CDT-ie20udf01}, it follows that $y \mid f(\alpha)$.
\end{proof}

Consider a dead end $e=\{v,\alpha\}$, where $v \in \Veul$ and $\alpha \in \Aeul_0$.
If the decoration of $e$ near $v$ is $1$, we call $e$ a {\it dead end decorated by $1$}.

\begin{definition}  \label {oO092hrdn7e30q9mwgfnzPloa7Wg7}
Let $\Teul = (\Veul,\Aeul,\Eeul, f,q) \in \DT$.
In each of the following four cases, we define a tree $\Teul' = (\Veul',\Aeul',\Eeul', f',q')$ that belongs to $\DT$.
\begin{enumerata}

\item Let $e = \{v,\alpha\}$ be a dead end, where $v \in \Veul$, $\alpha \in \Aeul_0$ and $q(e,v)=1$.
Let $\Teul'$ be the tree obtained from $\Teul$ by deleting the arrow $\alpha$ and the edge $e$.
We say that $\Teul'$ is obtained from $\Teul$ {\it by deleting a dead end decorated by $1$.}

\item Let $e = \{v,t\}$ be an edge such that $v,t \in \Veul$, $\delta_t=1$ and $q(e,v)=1$.
Let $\Teul'$ be the tree obtained from $\Teul$ by deleting the vertex $t$ and the edge $e$.
We say that $\Teul'$ is obtained from $\Teul$ {\it by deleting a pending vertex decorated by $1$.}

\item Let $v \in \Veul$ be such that $\delta_v=2$, and let $e_1 = \{u_1,v\}$ and $e_2=\{v,u_2\}$ be the two edges incident to $v$ (where $u_1,u_2 \in \Veul \cup \Aeul$).
Let $\Teul'$ be the tree obtained from $\Teul$ by deleting the vertex $v$ and the edges $e_1$ and $e_2$,
adding the edge $e' = \{u_1,u_2\}$, and defining the decorations of $e'$ near $u_1$ and $u_2$ to be $q(e_1,u_1)$ and $q(e_2,u_2)$ respectively.
We say that $\Teul'$ is obtained from $\Teul$ {\it by deleting a vertex of valency $2$.}

\item Let $e = \{v_1,v_2\}$ be an edge such that
\begin{equation}  \label {f985bbvAbYqbpqkqlqy6542}
v_1,v_2 \in \Veul, \quad q(e,v_1) = Q(e,v_2) \quad \text{and} \quad q(e,v_2) = Q(e,v_1).
\end{equation}
Observe that $\det(e)=0$.
Let $\Teul'$ be the tree obtained from $\Teul$ by deleting the edge $e$ and the vertices $v_1$ and $v_2$, adding a new vertex $v$,
and replacing each edge of the form $\epsilon = \{x,v^*\}$ with  $v^* \in \{v_1,v_2\}$ and $x \notin \{v_1,v_2\}$
by an edge $\epsilon' = \{x,v\}$ with decorations $q'(\epsilon',x) = q(\epsilon,x)$ and $q'(\epsilon',v) = q(\epsilon,v^*)$.
We say that $\Teul'$ is obtained from $\Teul$ {\it by contracting an edge of determinant zero.}
\end{enumerata}
\end{definition}

\begin{remark}
Not all edges of determinant zero can be contracted as in \ref{oO092hrdn7e30q9mwgfnzPloa7Wg7}(d).
Consider an edge $e = \{v_1,v_2\}$ of $\Teul\in\DT$ such that $v_1,v_2 \in \Veul$ and $\det(e)=0$. 
Define $q_i = q(e,v_i)$ and $Q_i = Q(e,v_i)$ for $i = 1,2$.
It is not hard to see that either $(q_1,q_2) = (Q_2,Q_1)$ or $(q_1,q_2) = (-Q_2,-Q_1)$.
The contraction operation of \ref{oO092hrdn7e30q9mwgfnzPloa7Wg7}(d) is permitted if and only if
condition \eqref{f985bbvAbYqbpqkqlqy6542} is satisfied, if and only if $(q_1,q_2) = (Q_2,Q_1)$.
\end{remark}

\begin{lemma}  \label {0v82u395u4i83hrf993md4tj09}
Let $\Teul, \Teul' \in \DT$, and suppose that $\Teul'$ is obtained from $\Teul$ by 
performing one of the four operations of Def.\ \ref{oO092hrdn7e30q9mwgfnzPloa7Wg7}. 
Then  $M(\Teul) = M(\Teul')$ and $F(\Teul) = F(\Teul')$.
\end{lemma}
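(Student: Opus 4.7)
I will treat the four operations of Definition~\ref{oO092hrdn7e30q9mwgfnzPloa7Wg7} separately. In each case the strategy is the same: track how the basic quantities $x_{w,\beta}$ change when one passes from $\Teul$ to $\Teul'$, deduce the change in each $N_w$, and verify that this change is absorbed by the change in valencies in the formula $M(\Teul) = -\sum_{w} N_w(\delta_w - 2)$. For $F$ the analysis is parallel: since arrows have valency $1$, the quantities $\hat x_{\alpha,\beta}$ for $\alpha \in \Aeul \setminus \Aeul_0$ coincide with $x_{\alpha,\beta}$, so once the $x_{\alpha,\beta}$ are shown unchanged, the $p(\alpha,e_\alpha)$ are unchanged and hence so is each $F(\alpha)$.

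Operations (a), (b), and (c) are straightforward. For (a), the decoration $1$ near $v$ means the edge $e$ contributes a factor $1$ whenever it is incident to a path, so $N_w$ is unchanged for every $w \neq \alpha$; the contribution of $\alpha$ to $M(\Teul)$ is $-N_\alpha(1-2) = N_\alpha$ and the drop of $\delta_v$ by $1$ adds $+N_v$ to the $v$-term, and Remark~\ref{es5s6xjc17lcy4l} gives $N_v = q(e,v) N_\alpha = N_\alpha$, so the two changes cancel. Operation (b) is analogous: a direct comparison of paths starting at $t$ versus at $v$ gives $x_{v,\beta} = x_{t,\beta}$ for every $\beta \in \Aeul \setminus \Aeul_0$ (the only difference being an incident-edge factor $q(e,v)=1$ at $v$), whence $N_v = N_t$ and the same cancellation holds. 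For (c), the condition $\delta_v = 2$ makes the $v$-contribution to $M(\Teul)$ vanish regardless of $N_v$; moreover, any path in $\Teul$ through $v$ uses both edges at $v$, so there are no incident edges at $v$ contributing to any $x_{w,\beta}$, and in $\Teul'$ the replacement edge $e'$ carries matching decorations, so all $x_{w,\beta}$ are preserved. Invariance of $F$ is immediate from the $x_{\alpha,\beta}$-invariance in each of (a), (b), (c).

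Operation (d) is the main case and the only one requiring real work. Write $q_i = q(e,v_i)$ and $Q_i = Q(e,v_i)$, and let $a_1,\dots,a_{n_1}$ (respectively $b_1,\dots,b_{n_2}$) denote the decorations at $v_1$ (respectively $v_2$) of the edges incident to $v_1$ (respectively $v_2$) other than $e$. The hypothesis \eqref{f985bbvAbYqbpqkqlqy6542} reads $q_1 = \prod_j b_j = Q_2$ and $q_2 = \prod_i a_i = Q_1$. The key identity, proved by an elementary case analysis on whether the path $\gamma_{w,\beta}$ crosses the edge $e$ or not, is
\begin{equation*}
x^{\Teul}_{v_1,\beta} \;=\; x^{\Teul}_{v_2,\beta} \;=\; x^{\Teul'}_{v,\beta} \qquad \text{for every } \beta \in \Aeul \setminus \Aeul_0,
\end{equation*}
together with $x^{\Teul}_{w,\beta} = x^{\Teul'}_{w,\beta}$ for every $w \notin \{v_1,v_2\}$. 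In each case the check rests entirely on the matching $q_1 = Q_2$, $q_2 = Q_1$, which makes the starting-vertex contributions agree between $\Teul$ and $\Teul'$. Setting $N := N_{v_1} = N_{v_2} = N_v$, and noting that $\delta_{v_1} = n_1+1$, $\delta_{v_2} = n_2+1$, $\delta_v = n_1+n_2$, the change in $M$ is
\begin{equation*}
M(\Teul) - M(\Teul') \;=\; -N(n_1-1) \,-\, N(n_2-1) \,+\, N(n_1+n_2-2) \;=\; 0,
\end{equation*}
and invariance of $F$ again follows from the preserved $x_{\alpha,\beta}$. The main obstacle of the whole proof is precisely case (d): the fine numerical balance imposed by $(q_1,q_2)=(Q_2,Q_1)$ is exactly what is needed to force $N_{v_1}$, $N_{v_2}$, and $N_v$ to coincide, and without it the starting-vertex contributions would fail to match.
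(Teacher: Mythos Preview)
Your proof is correct and follows essentially the same approach as the paper's: track how $x_{w,\beta}$ (hence $N_w$) changes under each operation and verify the valency bookkeeping in $M(\Teul)=-\sum_w N_w(\delta_w-2)$, with case (d) hinging on the identity $N_{v_1}=N_{v_2}=N'_v$ forced by $(q_1,q_2)=(Q_2,Q_1)$. The paper's version is terser---it asserts the case-(d) identity as ``easily verified'' and leaves the invariance of $F$ entirely to the reader---whereas you spell out both the path-decoration comparison establishing $x^{\Teul}_{v_1,\beta}=x^{\Teul}_{v_2,\beta}=x^{\Teul'}_{v,\beta}$ and the reason $F$ is preserved (namely that $p(\alpha,e_\alpha)=\sum_\beta x_{\alpha,\beta}$ for arrows $\alpha$, so unchanged $x_{\alpha,\beta}$ suffices).
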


\begin{proof}
We use the notations $\Teul = (\Veul,\Aeul,\Eeul, f,q)$ and $\Teul' = (\Veul',\Aeul',\Eeul', f',q')$.
We use primes ($'$) to indicate that a quantity is computed in $\Teul'$ (for instance $x_{v,\alpha}$ is computed in $\Teul$
and $x'_{v,\alpha}$ is computed in $\Teul'$).
We prove that $M(\Teul) = M(\Teul')$. The proof of $F(\Teul) = F(\Teul')$ is much easier, and left to the reader.

In the first three cases, it is clear that $N_w' = N_w$ for every $w \in \Veul' \cup \Aeul_0' \subset \Veul \cup \Aeul_0$.

The first two cases are treated together:
let $e = \{v,t\}$ be an edge in $\Teul$ such that $v \in \Veul$, $t \in \Veul \cup \Aeul_0$, $\delta_t=1$, and $q(e,v)=1$,
and suppose that $\Teul'$ is obtained by deleting $t$ and $e$.
Since $q(e,v)=1$, we have $x_{t,\alpha} = x_{v,\alpha}$ for all $\alpha \in \Aeul \setminus \Aeul_0$, so $N_t = N_v$.
Since $\delta_v = \delta'_v+1$, $N_v(\delta_v-2) + N_t(\delta_t-2) = N_v(\delta_v-2) - N_v = N'_v(\delta'_v-2)$.
Since $N_x(\delta_x-2) = N'_x(\delta'_x-2)$ for all $x \in  (\Veul \cup \Aeul_0) \setminus \{v,t\} = (\Veul' \cup \Aeul_0') \setminus \{v\}$,
we get $M(\Teul) = M(\Teul')$.

If $\Teul'$ is obtained by deleting a vertex $v$ of valency $2$,
then  $N_v(\delta_v-2) = 0$ and $N_x(\delta_x-2) = N'_x(\delta'_x-2)$ for all $x \in  (\Veul \cup \Aeul_0) \setminus \{v\} = \Veul' \cup \Aeul_0'$,
so $M(\Teul) = M(\Teul')$.

If $\Teul'$ is obtained from $\Teul$ by contracting an edge $e = \{v_1,v_2\}$ of determinant zero then it is easily verified that 
$N_v' = N_{v_1} = N_{v_2}$ and $(\delta_{v_1}-2) + (\delta_{v_2}-2) = (\delta_{v}'-2)$, so
$N_{v_1}(\delta_{v_1}-2) + N_{v_2}(\delta_{v_2}-2) = N_v'(\delta_{v}'-2)$; moreover,
$N_w' = N_w$ and $\delta_w' = \delta_w$ for every $w \in \Veul' \cup \Aeul_0' \setminus \{v\} = \Veul \cup \Aeul_0 \setminus \{v_1,v_2\}$,
so $M(\Teul) = M(\Teul')$.
\end{proof}

It is possible to define an inverse of the contraction operation of Def.\ \ref{oO092hrdn7e30q9mwgfnzPloa7Wg7}(d).
We do this in Def.\ \ref{983tet3w23hwryhj4djne}. First, we need:

\begin{definition}  \label {87tr432hb8re85g78s87jbuf}
A {\it $2$-prepartition\/} of a set $E$ is a set $\Pscr = \{E_1,E_2\}$ such that $E_1,E_2 \subseteq E$, $E_1 \cup E_2 = E$ and  $E_1 \cap E_2 = \emptyset$.
Note that $\{ \emptyset, E \}$ is a $2$-prepartition of $E$.
\end{definition}

\begin{definition}  \label {983tet3w23hwryhj4djne}
Let $\Teul \in \DT$, let $v \in \Veul$, let $E_v$ denote the set of all edges of $\Teul$ that are incident to $v$,
and let $\Pscr$ be a $2$-prepartition of $E_v$ (see Def.\ \ref{87tr432hb8re85g78s87jbuf}).
The triple $(\Teul,v,\Pscr)$ determines a pair $(\Teul_0,e)$ where $\Teul_0 \in \DT$ and $e$ is an edge of $\Teul_0$ satisfying \eqref{f985bbvAbYqbpqkqlqy6542}.
We proceed to define $(\Teul_0,e)$.

Write $\Pscr = \{ E_1,E_2 \}$. For each $i \in \{1,2\}$, let $X_i = \setspec{ x \in \Veul \cup \Aeul }{ \{x,v\} \in E_i }$
and $a_i = \prod_{e \in E_i} q(e,v)$.
We write $q$ and $q_0$ for the edge decorations in $\Teul$ and $\Teul_0$ respectively.
Let $\Teul_0$ be the tree obtained by performing the following operations on~$\Teul$:
\begin{itemize}

\item delete $v$ and all edges incident to $v$;

\item add two vertices $v_1$ and $v_2$ and the edge $e = \{v_1, v_2\}$, and define\\
$q_0( e , v_1 ) = a_2$ and $q_0( e , v_2 ) = a_1$;

\item for each $i \in \{1,2\}$ and $x \in X_i$, add the edge $\{v_i,x\}$ and define\\
$q_0( \{v_i,x\} , v_i ) = q( \{v,x\}, v )$ and $q_0( \{v_i,x\} , x ) = q( \{v,x\}, x )$.

\end{itemize}

$$
{\begin{picture}(32,14)(-20,-9)

\put(-15,0){\makebox(0,0)[r]{$\Teul\,:$}}

\put(0,0){\circle{1}}
\put(0,2){\makebox(0,0)[b]{\footnotesize $v$}}

\put(-0.4472,0.2236){\line(-2,1){5}}
\put(-0.4472,-0.2236){\line(-2,-1){5}}

\put(0.4472,0.2236){\line(2,1){5}}
\put(0.4472,-0.2236){\line(2,-1){5}}

\put(6,3){\circle{1}}
\put(6,-3){\circle{1}}
\put(-6,3){\circle{1}}
\put(-6,-3){\circle{1}}

\put(-6.4472,3.2236){\line(-2,1){4}}
\put(-6.4472,2.7764){\line(-2,-1){4}}

\put(-6.4472,-3.2236){\line(-2,-1){4}}
\put(-6.4472,-2.7764){\line(-2,1){4}}

\put(6.4472,3.2236){\line(2,1){4}}
\put(6.4472,2.7764){\line(2,-1){4}}

\put(6.4472,-3.2236){\line(2,-1){4}}
\put(6.4472,-2.7764){\line(2,1){4}}

\put(-6,0){\oval(4,12)}
\put(6,0){\oval(4,12)}

\put(-6,-7){\makebox(0,0)[t]{\footnotesize $X_1$}}
\put(6,-7){\makebox(0,0)[t]{\footnotesize $X_2$}}

\end{picture}}
\qquad 
{\begin{picture}(54,14)(-32,-9)

\put(-25,0){\makebox(0,0)[r]{$\Teul_0\,:$}}

\put(-10,0){\circle{1}}
\put(-10,2){\makebox(0,0)[b]{\footnotesize $v_1$}}
\put(-8,-1){\makebox(0,0)[tl]{\footnotesize $a_2$}}
\put(8,-1){\makebox(0,0)[tr]{\footnotesize $a_1$}}

\put(-9.5,0){\line(1,0){19}}

\put(10,0){\circle{1}}
\put(10,2){\makebox(0,0)[b]{\footnotesize $v_2$}}

\put(0,1){\makebox(0,0)[b]{\footnotesize $e$}}

\put(-10.4472,0.2236){\line(-2,1){5}}
\put(-10.4472,-0.2236){\line(-2,-1){5}}

\put(10.4472,0.2236){\line(2,1){5}}
\put(10.4472,-0.2236){\line(2,-1){5}}

\put(16,3){\circle{1}}
\put(16,-3){\circle{1}}
\put(-16,3){\circle{1}}
\put(-16,-3){\circle{1}}

\put(-16.4472,3.2236){\line(-2,1){4}}
\put(-16.4472,2.7764){\line(-2,-1){4}}

\put(-16.4472,-3.2236){\line(-2,-1){4}}
\put(-16.4472,-2.7764){\line(-2,1){4}}

\put(16.4472,3.2236){\line(2,1){4}}
\put(16.4472,2.7764){\line(2,-1){4}}

\put(16.4472,-3.2236){\line(2,-1){4}}
\put(16.4472,-2.7764){\line(2,1){4}}

\put(-16,0){\oval(4,12)}
\put(16,0){\oval(4,12)}

\put(-16,-7){\makebox(0,0)[t]{\footnotesize $X_1$}}
\put(16,-7){\makebox(0,0)[t]{\footnotesize $X_2$}}

\end{picture}}
$$
Observe that $\Teul_0 \in \DT$ and that $e = \{v_1,v_2\}$ is an edge of  $\Teul_0$ satisfying \eqref{f985bbvAbYqbpqkqlqy6542}.
Since $(\Teul_0,e)$ is obtained from $(\Teul,v,\Pscr)$ by creating an \underline{e}dge of \underline{d}eterminant \underline{z}ero, we shall use 
the notation 
$$
\Edz(\Teul,v,\Pscr) = (\Teul_0,e) .
$$
Note that the tree obtained from $\Teul_0$ by contracting $e$ (Def.\ \ref{oO092hrdn7e30q9mwgfnzPloa7Wg7}(d)) is $\Teul$.
By Lemma \ref{0v82u395u4i83hrf993md4tj09}, it follows that 
$$
M(\Teul_0) = M(\Teul) \quad \text{and} \quad  F(\Teul_0) = F(\Teul) .
$$
Also, it is straightforward to verify that 
\begin{equation}  \label {8765rfdmpldnpwrfp8jmlrwpuy65dr43gfd4w228834yj}
N_{v_1} = N_{v_2} = N_{v}
\end{equation}
where $N_{v_1}$ and $N_{v_2}$ are computed in $\Teul_0$ and $N_v$ is computed in $\Teul$.
\end{definition}

\begin{notation}
Let $\gamma = (x_0, \dots, x_n)$ be a path in a tree $\Teul \in \DT$, with $n>0$.
Define 
$$
q(\gamma,x_0) = q( \{x_0,x_1\}, x_0) \text{\ \  and\ \ } q(\gamma,x_n) = q( \{x_{n-1},x_n\}, x_n)
$$
and for each $u \in \{ x_0, \dots, x_n \}$,  
$$
Q(\gamma,u) = \prod_{\epsilon \in \Eeul(\gamma,u)}  q(\epsilon,u)
$$
where $\Eeul(\gamma,u)$ is the set of edges that are incident to $u$ but are not in $\gamma$,
and where empty products are equal to $1$ by convention.
Also, define
$$
Q^*(\gamma) = \prod_{0<i<n} Q(\gamma,x_i) .
$$
\end{notation}


\begin{definition} \label {dpc9vin2o309d093detpathd920}
Let $\gamma = (x_0, \dots, x_n)$ be a path in a tree $\Teul \in \DT$, with $n>0$.
We define the determinant of $\gamma$ as:
$$
\det\gamma = q(\gamma,x_0)  q(\gamma,x_n) - Q^*(\gamma)^2  Q(\gamma,x_0)  Q(\gamma,x_n) .
$$
\end{definition}

The determinant of a path generalizes the determinant of an edge (Def.\ \ref{defdetedgej923hb912edl}).

\begin{definition} 
Let $\Teul \in \DT$ and consider a path $\gamma = (x_0, \dots, x_n)$ in $\Teul$ with $n>0$.
We say that $\gamma$ is a {\it linear path\/} if $\delta_{x_i}=2$ for all $i$ such that $0<i<n$. 
If $\gamma$ is a linear path then the definition of $\det(\gamma)$ given in \ref{dpc9vin2o309d093detpathd920}
simplifies to $\det(\gamma) = q(e_1,x_0) q(e_n,x_n) - Q(e_1,x_0)Q(e_n,x_n)$,
where $e_1 = \{x_0,x_1\}$ and $e_n = \{x_{n-1},x_n\}$.
\end{definition}

\begin{proposition} \label {kuwdhr12778}
Let $\Teul\in\DT$, consider distinct $v,v' \in \Veul \cup \Aeul_0$,
and suppose that the path $\gamma = \gamma_{v,v'}$ is linear. 
If $e$ is the unique edge in $\gamma$ which is incident to~$v$ then
$$
\left| \begin{smallmatrix} q(\gamma,v) & Q(\gamma,v') \\ N_v & N_{v'} \end{smallmatrix} \right| = \det(\gamma) p(v,e) \, .
$$
\end{proposition}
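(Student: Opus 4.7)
The plan is a direct computation: unpack $N_v$ and $N_{v'}$ via Remark~\ref{52retfxcxntgBmA98WjmHkkj498736762} applied at the two ends of $\gamma$, and then identify the resulting $p$-terms using linearity. Write $\gamma = (v=x_0, x_1, \dots, x_n = v')$ and $e_i = \{x_{i-1}, x_i\}$, so that $e = e_1$. Since $e$ is the only edge of $\gamma$ incident to $v$, one has $q(\gamma,v) = q(e,v)$ and $Q(\gamma,v) = Q(e,v)$; similarly $q(\gamma,v') = q(e_n,v')$ and $Q(\gamma,v') = Q(e_n,v')$. Linearity forces each internal vertex $x_i$ $(0 < i < n)$ to have valency $2$ with both incident edges in $\gamma$, so $Q(\gamma,x_i) = 1$, $Q^*(\gamma) = 1$, and therefore $\det(\gamma) = q(e,v) q(e_n,v') - Q(e,v) Q(e_n,v')$.

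Applying Remark~\ref{52retfxcxntgBmA98WjmHkkj498736762} to $e$ and to $e_n$ gives
\[
N_v = Q(e,v)\,p(v,e) + q(e,v)\,p(x_1,e), \qquad
N_{v'} = Q(e_n,v')\,p(v',e_n) + q(e_n,v')\,p(x_{n-1},e_n).
\]
The crucial step is the \emph{linearity invariance}
\[
p(v,e) = p(x_{n-1},e_n) \qquad\text{and}\qquad p(v',e_n) = p(x_1,e).
\]
Granted this, substitution yields
\[
q(\gamma,v) N_{v'} - Q(\gamma,v') N_v
= \bigl[ q(e,v) q(e_n,v') - Q(e,v) Q(e_n,v') \bigr]\, p(v,e)
= \det(\gamma)\, p(v,e),
\]
the $p(x_1,e)$-terms cancelling cleanly.

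I would verify the first invariance (the second is strictly symmetric). Since every internal $x_i$ has valency $2$ with both incident edges in $\gamma$, an arrow $\alpha \in \Aeul \setminus \Aeul_0$ satisfies $e \in \gamma_{v,\alpha}$ iff the path from any point of $\gamma$ to $\alpha$ must exit $\gamma$ at $v'$, iff $e_n \in \gamma_{x_{n-1},\alpha}$; hence $\Aeul(v,e) = \Aeul(x_{n-1},e_n)$. For such $\alpha$, the edge set $\hat E$ entering $\hat x_{v,\alpha}$ and the edge set $\hat E'$ entering $\hat x_{x_{n-1},\alpha}$ coincide: each internal $x_i$ contributes nothing to either (its two edges are in the respective path), the edges incident to $v$ are excluded from $\hat E$ by definition and are absent from $\hat E'$ (not incident to any vertex of $\gamma_{x_{n-1},\alpha}$), the edges incident to $x_{n-1}$ are symmetrically handled, and the contribution from the segment after $v'$ is identical in both. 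Hence $\hat x_{v,\alpha} = \hat x_{x_{n-1},\alpha}$ and the two sums agree.

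The main obstacle is precisely this bookkeeping for the linearity invariance; after that the calculation is a one-line cancellation. The proof works uniformly for $v,v' \in \Veul$ and $v,v' \in \Aeul_0$: in the latter case one has $q(e,v) = 1 = Q(e,v)$, but Remark~\ref{52retfxcxntgBmA98WjmHkkj498736762} still applies because its hypotheses only require $x \in \Veul \cup \Aeul_0$ and $y \in \Veul \cup \Aeul$, which is verified at both endpoints (with $y = x_1$ or $y = v'$ at one end, both in $\Veul \cup \Aeul$ since an intermediate vertex of a linear path has valency $2$ and is therefore in $\Veul$).
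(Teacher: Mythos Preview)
Your proof is correct and follows essentially the same approach as the paper's: apply Remark~\ref{52retfxcxntgBmA98WjmHkkj498736762} at both endpoints, use linearity to identify $p(v,e)=p(x_{n-1},e_n)$ and $p(v',e_n)=p(x_1,e)$, and compute the $2\times 2$ determinant directly. The paper simply asserts the two $p$-identities without justification, whereas you supply the bookkeeping argument for them; otherwise the arguments are identical.
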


\begin{proof}
Write $\gamma = (x_0, \dots, x_n)$ and $e_i = \{x_{i-1},x_i\}$ for $i=1,\dots,n$, and define
$$
Q = Q(e_1,x_0) , \quad q = q(e_1,x_0) , \quad q' = q(e_n,x_n) , \quad Q' = Q'(e_n,x_n) .
$$
Since $\gamma$ is linear, we have $p(x_0,e_1) = p(x_{n-1},e_n)$ and $p(x_n,e_n) = p(x_1,e_1)$.
We use the abbreviations $p = p(x_0,e_1) = p(x_{n-1},e_n)$ and $p' = p(x_n,e_n) = p(x_1,e_1)$.
Then Rem.\ \ref{52retfxcxntgBmA98WjmHkkj498736762} gives $N_v = Qp + qp'$ and $N_{v'} = q'p + Q'p'$, so
$$
\left| \begin{smallmatrix} q(\gamma,v) & Q(\gamma,v') \\ N_v & N_{v'} \end{smallmatrix} \right|
=\left| \begin{smallmatrix} q & Q' \\ Qp + qp' & q'p + Q'p' \end{smallmatrix} \right|
=\left| \begin{smallmatrix} q & Q' \\ Qp  & q'p \end{smallmatrix} \right|
=\left| \begin{smallmatrix} q & Q' \\ Q  & q' \end{smallmatrix} \right| p
= \det(\gamma) p(v,e) \, .
$$
\end{proof}

\begin{figure}[h]
\setlength{\unitlength}{1mm}
\scalebox{.8}{\begin{picture}(74,30)(-27,-15)
\put(-20,0){\circle{1}}
\put(0,0){\circle{1}}
\put(-19.5,0){\line(1,0){19}}
\put(-20.632,0.316){\line(-2,1){5}}
\put(-20.632,-.316){\line(-2,-1){5}}
\put(-19,-1){\makebox(0,0)[tl]{\tiny $q(\gamma,v)$}}
\put(-20,1){\makebox(0,0)[b]{\footnotesize $v$}}
\put(-10,1){\makebox(0,0)[b]{\footnotesize $e$}}

\put(0,8){\makebox(0,0){$\displaystyle\overbrace{\rule{44\unitlength}{0mm}}^{\text{\footnotesize $\gamma$ is a linear path}}$}}

\put(20,0){\circle{1}}
\put(0.5,0){\line(1,0){6.3}}
\put(19.5,0){\line(-1,0){6.3}}
\put(10.5,0){\makebox(0,0){\dots}}
\put(20.632, 0.316){\line(2,1){16.367}}
\put(20.632,-0.316){\line(2,-1){16.367}}
\put(42,0){\oval(10,30)}
\put(42,0){\makebox(0,0){\tiny $\Aeul(v,e)$}}
\put(20,1){\makebox(0,0)[b]{\footnotesize $v'$}}
\put(23,0){\makebox(0,0)[l]{\tiny $Q(\gamma,v')$}}
\end{picture}}
\caption{Schematic representation of the situation of Propositions \ref{kuwdhr12778} and \ref{nouveau61324d1g3r}.}
\label {83473yruer938d}
\end{figure}
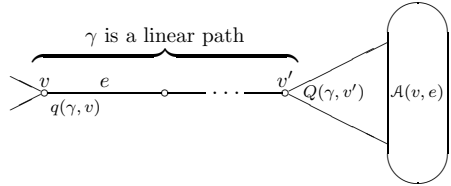

\section{Splitting and EN-Splitting}

Splitting and EN-splitting are two operations that can be performed on decorated trees.
Each one of these operations breaks a tree $\Teul \in \DT$ into two pieces $\Teul_1,\Teul_2 \in \DT$.
There are two types of splitting (at an edge and at a vertex) and two types of EN-splitting (at an edge and at a vertex).

\subsection*{Splitting}

\begin{definition} \label {5bwfj9gxjqk2cybwowc4ygr}
Let $\Teul = (\Veul,\Aeul,\Eeul,f,q) \in \DT$. By a {\it good pair\/} of $\Teul$ we mean an ordered pair $(z,v)$ such that

\begin{enumerati}

\item $z \in \Veul$, $v \in \Veul \cup \Aeul$ and $\{z,v\}$ is an edge of $\Teul$;

\item exactly one element $\alpha_0$ of $\Aeul_0 \setminus \{v\}$ is adjacent to $z$;

\item each element $\alpha$ of $(\Veul \cup \Aeul) \setminus \{ \alpha_0, v \}$ adjacent to $z$
satisfies $\alpha \in \Aeul$, $f(\alpha)=1$ and $q( \{z,\alpha\} , z ) = 1$;

\item $q(e,z) d + p(z,e) = 0$, where $e = \{z,v\}$ and $d = \delta_z - 2$. 

\end{enumerati}
\end{definition}

\begin{remark}  \label {filledcirclegoodpair9283479837489}
If $(z,v)$ is a good pair of $\Teul \in \DT$ then, near $z$, $\Teul$ must look like one of the following pictures:
$$
\begin{array}{ccc}
\scalebox{.9}{\begin{picture}(32,15)(-35,-11)
\put(-30,0){\circle{1}}
\put(-32,0){\makebox(0,0)[r]{\footnotesize $v$}}
\put(-29.5,0){\line(1,0){14}}
\put(-15,0){\circle*{1}}
\put(-14.5,0){\line(1,0){7.7}} \put(-7,0.18){\makebox(0,0)[l]{\footnotesize $<\! d$}}
\put(-15,-.5){\vector(0,-1){9.5}}
\put(-16,-9){\makebox(0,0)[r]{\footnotesize $\alpha_0$}}
\put(-14,-9){\makebox(0,0)[l]{\footnotesize $(0)$}}
\put(-15,1){\makebox(0,0)[b]{\footnotesize $z$}}
\put(-30.35355,0.35355){\line(-1,1){4}}
\put(-30.35355,-0.35355){\line(-1,-1){4}}
\end{picture}}
& \quad &
\scalebox{.9}{\begin{picture}(32,15)(-35,-11)
\put(-28,1){\makebox(0,0)[b]{\footnotesize $v$}}
\put(-15.5,0){\vector(-1,0){14}}
\put(-15,0){\circle*{1}}
\put(-14.5,0){\line(1,0){7.7}} \put(-7,0.18){\makebox(0,0)[l]{\footnotesize $<\! d$}}
\put(-15,-.5){\vector(0,-1){9.5}}
\put(-14,-9){\makebox(0,0)[l]{\footnotesize $(0)$}}
\put(-16,-9){\makebox(0,0)[r]{\footnotesize $\alpha_0$}}
\put(-30,0){\makebox(0,0)[r]{\footnotesize $(s)$}}
\put(-15,1){\makebox(0,0)[b]{\footnotesize $z$}}
\end{picture}} \\
\text{\footnotesize Good pair $(z,v)$ with $v \in \Veul$.}
&&
\text{\footnotesize Good pair $(z,v)$ with $v \in \Aeul$.}
\end{array}
$$
where $d \in \Nat$ and $s \in \Integ$, and where we assume that $q(\{z,v\},z)d + p(z,\{z,v\}) = 0$ in both pictures.
The case $d=0$ is allowed.
Condition (iii) of Def.\ \ref{5bwfj9gxjqk2cybwowc4ygr} allows us to use the notation ``\begin{picture}(11.5,1)(0,-.5)
\put(0.5,0){\line(1,0){6.5}}
\put(6.7,0.2){\makebox(0,0)[l]{\footnotesize $<\! d$}}
\end{picture}'' defined in paragraph \ref{CROCHETpco9v0239jd0OiiJq0wjd}.
The fact that $z$ is represented by a filled circle
``$\begin{picture}(2,1)(-1,-.5) \put(0,0){\circle*{1}} \end{picture}$'' suggests that $N_z=0$ (see \ref{CROCHETpco9v0239jd0OiiJq0wjd}),
which is indeed the case by the next remark.
\end{remark}

\begin{remark} \label {o987543wwwm2kokqwdftxz8gij0i}
If $(z,v)$ is a good pair of $\Teul \in \DT$ (with notation as in Def.\ \ref{5bwfj9gxjqk2cybwowc4ygr}), then
$$
N_{\alpha_0}=0 \quad \text{and} \quad N_{z}= 0 .
$$
Indeed, $N_{\alpha_0}=q(e,z) d + p(z,e) = 0$ and $N_{z}= q( \{z,\alpha_0\},z ) N_{\alpha_0} = 0$.
In particular, $-N_z(\delta_z-2)-N_{\alpha_0}(\delta_{\alpha_0}-2) = 0$, i.e.,
the contribution of $z$ and $\alpha_0$ in the calculation of $M(\Teul)$ is zero.
\end{remark}

\begin{definition}  \label {jkc65432o396r839f0f6n30}
Let $\Teul = (\Veul,\Aeul,\Eeul,f,q) \in \DT$.
Given an edge $e = \{v_1,v_2\}$ of $\Teul$ (where $v_1,v_2 \in \Veul \cup \Aeul$), 
we proceed to define two trees $\Teul_1, \Teul_2 \in \DT$.
\begin{equation} \label {picture9893647239dj943-1}
\raisebox{-6\unitlength}{\begin{picture}(53,14)(-30,-7)
\put(-25,0){\makebox(0,0)[r]{$\Teul\,:$}}
\put(-18,4){\makebox(0,0)[rb]{$\mathscr{B}_1$}}
\put(18,4){\makebox(0,0)[lb]{$\mathscr{B}_2$}}

\put(-10,0){\circle{1}}
\put(-10,2){\makebox(0,0)[b]{\footnotesize $v_1$}}
\put(-09.5,0){\line(1,0){19}}
\put(0,2){\makebox(0,0)[b]{\footnotesize $e$}}

\put(10,0){\circle{1}}
\put(10,2){\makebox(0,0)[b]{\footnotesize $v_2$}}
\put(09.5,0){\line(-1,0){14}}

\put(-10.35355,0.35355){\line(-1,1){4}}
\put(-10.35355,-0.35355){\line(-1,-1){4}}
\put(-12,0){\oval(13,12)}

\put(10.35355,0.35355){\line(1,1){4}}
\put(10.35355,-0.35355){\line(1,-1){4}}
\put(12,0){\oval(13,12)}

\end{picture}} \text{(edge decorations not indicated).}
\end{equation}
(Note that $v_1,v_2$ are allowed to be arrows, even though the pictures \eqref{picture9893647239dj943-1}
and \eqref{picture9893647239dj943-2} suggest the contrary.)
Define 
$$
d = \gcd\big( p(v_1,e), p(v_2,e) \big) . 
$$ 
Note that $d \in \Nat$.
If $d=0$, define $(x_1,a_1) = (0,1) = (x_2,a_2)$.
If $d \neq 0$, define
$$
\text{$a_i = \frac{p(v_i,e)}{d}$ for each $i=1,2$,\quad $x_1=-a_2$,\ \ $x_2=-a_1$.}
$$
Observe that $\gcd(x_1,a_1) = 1 = \gcd(x_2,a_2)$ in all cases.

To define $\Teul_1$ and $\Teul_2$,
we begin by deleting $e$ from $\Teul$ (we do not delete $v_1$ and $v_2$); we now have two connected components, $\Ceul_1$ and $\Ceul_2$,
where $\Ceul_1$ contains $v_1$ and $\Ceul_2$ contains $v_2$.

Let $i \in \{1,2\}$; we obtain $\Teul_i$ by performing the following operations on $\Ceul_i$:
\begin{itemize}

\item add one vertex $z_i$ and one edge $\{v_i,z_i\}$;
\item add $d+1$ arrows $\alpha_{i,0}, \alpha_{i,1}, \dots, \alpha_{i,d}$ and $d+1$ edges $\{z_i,\alpha_{i,j}\}$ $(0 \le j \le d)$;
\item decorate $\alpha_{i,0}$ by $(0)$ and each of $\alpha_{i,1}, \dots, \alpha_{i,d}$ by $(1)$;
\item let the decoration of the edge $\{z_i,\alpha_{i,0}\}$ near $z_i$ be $a_i$;
\item let the decoration of the edge $\{v_i,z_i\}$ near $z_i$ be $x_i$;
\item let the decoration of the edge $\{v_i,z_i\}$ near $v_i$ be $q(e,v_i)$ (computed in $\Teul$);
\item for each $j=1,\dots,d$, let the decoration of $\{z_i,\alpha_{i,j}\}$ near $z_i$ be $1$;
\item for each $j=0,\dots,d$, let the decoration of $\{z_i,\alpha_{i,j}\}$ near $\alpha_{i,j}$ be $1$.
\end{itemize}
It is understood that the part of $\Teul_i$ in the oval $\mathscr{B}_i$, in picture \eqref{picture9893647239dj943-2},
is identical to the part of $\Teul$ in the oval $\mathscr{B}_i$ in picture \eqref{picture9893647239dj943-1}
(this includes all edge decorations near $v_i$).
This defines $\Teul_i \in \DT$.

\begin{equation} \label {picture9893647239dj943-2}
\raisebox{-12\unitlength}{\begin{picture}(84,24)(-42,-17)
\put(-38,4){\makebox(0,0)[rb]{$\mathscr{B}_1$}}
\put(38,4){\makebox(0,0)[lb]{$\mathscr{B}_2$}}

\put(-30,0){\circle{1}}
\put(-30,2){\makebox(0,0)[b]{\footnotesize $v_1$}}
\put(-29.5,0){\line(1,0){14}}
\put(-15,0){\circle*{1}}
\put(-14.5,0){\line(1,0){7.7}} \put(-7,0.18){\makebox(0,0)[l]{\footnotesize $<\! d$}}
\put(-15,-.5){\vector(0,-1){9.5}}
\put(-14,-9){\makebox(0,0)[l]{\footnotesize $(0)$}}
\put(-16,-9){\makebox(0,0)[r]{\tiny $\alpha_{1,0}$}}
\put(-21,-14){\makebox(0,0){$\displaystyle\underbrace{\rule{36\unitlength}{0mm}}_{\Teul_1}$}}

\put(-14,2){\makebox(0,0)[b]{\footnotesize $z_1$}}
\put(14,2){\makebox(0,0)[b]{\footnotesize $z_2$}}

\put(-17.5,1.5){\makebox(0,0)[dr]{\footnotesize $x_1$}}
\put(-14.5,-3){\makebox(0,0)[l]{\footnotesize $a_1$}}

\put(17.5,1.5){\makebox(0,0)[dl]{\footnotesize $x_2$}}
\put(15.5,-3){\makebox(0,0)[l]{\footnotesize $a_2$}}

\put(30,0){\circle{1}}
\put(30,2){\makebox(0,0)[b]{\footnotesize $v_2$}}
\put(29.5,0){\line(-1,0){14}}
\put(15,0){\circle*{1}}
\put(14.5,0){\line(-1,0){7.7}} \put(7,0.18){\makebox(0,0)[r]{\footnotesize $d >$}}
\put(15,-.5){\vector(0,-1){9.5}}
\put(16,-9){\makebox(0,0)[l]{\footnotesize $(0)$}}
\put(14,-9){\makebox(0,0)[r]{\tiny $\alpha_{2,0}$}}
\put(21,-14){\makebox(0,0){$\displaystyle\underbrace{\rule{36\unitlength}{0mm}}_{\Teul_2}$}}

\put(-30.35355,0.35355){\line(-1,1){4}}
\put(-30.35355,-0.35355){\line(-1,-1){4}}
\put(-32,0){\oval(13,12)}

\put(30.35355,0.35355){\line(1,1){4}}
\put(30.35355,-0.35355){\line(1,-1){4}}
\put(32,0){\oval(13,12)}

\end{picture}}
\end{equation}
Observe the following (refer to Def.\ \ref{5bwfj9gxjqk2cybwowc4ygr}):
\begin{equation} \label {pd9iui237125fs72b32376iefcbuz}
\text{$(z_i,v_i)$ is a good pair of $\Teul_i$, for each $i=1,2$.}
\end{equation}

As we already noted, $\Teul_1, \Teul_2 \in \DT$.
We say that $\Teul_1$ and $\Teul_2$ are obtained by {\it splitting $\Teul$ at $e$,}
and we write 
$$
\Spl(\Teul,e) = \{ \Teul_1, \Teul_2 \} .
$$
The natural number $d$ is called the {\it degree\/} of the splitting $\Spl(\Teul,e)$.
\end{definition}

See Ex.\ \ref{8f790hbYBiD9485thBdfdcy82fjx3sBhNt87} for an example of splitting of the type defined above, i.e., a splitting {\it at an edge}.
The next definition introduces the notion of splitting {\it at a vertex\/}:

\begin{definition} \label {4512943mnbvxclfayqu53678xaw3cdrvubhlpmhlm0hkjnt7yfgd}
Let $\Teul \in \DT$, let $v \in \Veul$, let $E_v$ denote the set of all edges of $\Teul$ that are incident to $v$,
and let $\Pscr$ be a $2$-prepartition of $E_v$ (see Def.\ \ref{87tr432hb8re85g78s87jbuf}).

Consider the pair $(\Teul_0,e) = \Edz(\Teul,v,\Pscr)$ (see Def.\ \ref{983tet3w23hwryhj4djne}) and note that
it makes sense to consider $\Spl(\Teul_0,e)$, the splitting of $\Teul_0$ at $e$ as in Def.\ \ref{jkc65432o396r839f0f6n30}.

The {\it splitting of $\Teul$ at $(v,\Pscr)$} is denoted $\Spl(\Teul,v,\Pscr)$, and is defined by 
$$
\Spl(\Teul,v,\Pscr) = \Spl(\Teul_0,e) .
$$
We define the {\it degree\/} of the splitting $\Spl(\Teul,v,\Pscr)$ to be equal to the degree of the splitting $\Spl(\Teul_0,e)$.
\end{definition}

\begin{lemma}  \label {dBfo23bn6i8404f8hfwA7ylfaroj}
Let $\Teul \in \DT$ and suppose that $\xi$ satisfies one of the following conditions:
\begin{enumerati}

\item $\xi$ is an edge of $\Teul$;

\item $\xi = (v, \Pscr)$ for some vertex $v$ of $\Teul$ and some $2$-prepartition $\Pscr$ of the set of all edges of $\Teul$ incident to $v$.

\end{enumerati}
Consider $\Spl(\Teul,\xi) = \{ \Teul_1, \Teul_2 \}$ and let $d$ be the degree of the splitting $\Spl(\Teul,\xi)$.
Then the following hold:
\begin{enumerata}

\item $\Teul_1, \Teul_2 \in \DT$

\item $M( \Teul_1 ) + M( \Teul_2 ) = M( \Teul )$

\item $F( \Teul_1 ) + F( \Teul_2 ) = F( \Teul ) + 2d$

\item Assume that $d \neq 0$, let the notation for $\Teul_1,\Teul_2$ be as in \eqref{picture9893647239dj943-2},
and consider the edge $\{v_i,z_i\}$ of $\Teul_i$ for each $i=1,2$.
\begin{enumerata}

\item If $\xi$ is the edge $\{v_1,v_2\}$ of $\Teul$ and $v_1,v_2 \in \Veul$ then for each $i \in \{1,2\}$
the determinant of the edge $\{v_i,z_i\}$ of $\Teul_i$ is equal to  $-\frac{N_{v_i}}{d}$, where $N_{v_i}$ is computed in $\Teul$.

\item If $\xi = (v, \Pscr)$ then for each $i \in \{1,2\}$ the determinant of the edge $\{v_i,z_i\}$ of $\Teul_i$
is equal to  $-\frac{N_{v}}{d}$, where $N_{v}$ is computed in $\Teul$.

\end{enumerata}

\end{enumerata}
\end{lemma}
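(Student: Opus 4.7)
The plan is to handle the edge case (i) directly and deduce the vertex case (ii) from it. For (ii), by Def.\ \ref{4512943mnbvxclfayqu53678xaw3cdrvubhlpmhlm0hkjnt7yfgd} we have $\Spl(\Teul,v,\Pscr) = \Spl(\Teul_0,e)$ where $(\Teul_0,e) = \Edz(\Teul,v,\Pscr)$; the identities $M(\Teul_0)=M(\Teul)$, $F(\Teul_0)=F(\Teul)$ from Def.\ \ref{983tet3w23hwryhj4djne}, together with \eqref{8765rfdmpldnpwrfp8jmlrwpuy65dr43gfd4w228834yj}, transport parts (a)--(c) and, via $N_{v_i}^{(\Teul_0)} = N_v^{(\Teul)}$, also part (d)(ii) from the edge case. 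I may therefore assume $\xi = e = \{v_1,v_2\}$ is an edge, with notation as in Def.\ \ref{jkc65432o396r839f0f6n30}; set $q_i := q(e,v_i)$ and $Q_i := Q(e,v_i)$ (both computed in $\Teul$). Part (a) is a direct verification of Def.\ \ref{cJioI0qobfoWvvnxzhb83edfAiE7oqnkl}: the only substantive point is coprimality at the new vertex $z_i$, which reduces to $\gcd(x_i,a_i)=1$ (true by construction) combined with the trivial coprimality against $1$.

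The heart of (b) and (c) is the pair of identities
\[
N^{(\Teul_i)}_w = N^{(\Teul)}_w \text{ for } w \in \Ceul_i \cap (\Veul \cup \Aeul_0), \qquad p^{(\Teul_i)}(\alpha,e_\alpha) = p^{(\Teul)}(\alpha,e_\alpha) \text{ for } \alpha \in \Ceul_i \cap (\Aeul \setminus \Aeul_0).
\]
To prove them, I split each left-hand sum into contributions from arrows already in $\Ceul_i$ and from the new arrows $\alpha_{i,1},\dots,\alpha_{i,d}$. The old-arrow contributions are individually unchanged because the corresponding paths stay inside $\Ceul_i$ and the edge $\{v_i,z_i\}$ inherits the decoration $q_i$ of $e$ at $v_i$. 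For a new arrow $\alpha_{i,j}$, the path from $w$ to $\alpha_{i,j}$ picks up the factor $a_i$ at $z_i$ (the only nontrivial decoration among edges at $z_i$ outside the path); summing over $j$ converts $d\,a_i$ into $p(v_i,e)$, and this matches the total $\Teul$-contribution of the arrows of $\Aeul(v_i,e)\setminus\Aeul_0$ (those lying in $\Ceul_{3-i}$), because the common prefactor coming from the path segment from $w$ to $v_i$ together with the product at $v_i$ is the same in both pictures. Part (b) then follows: valencies in $\Ceul_i$ are preserved, and $(z_i,v_i)$ is a good pair by \eqref{pd9iui237125fs72b32376iefcbuz}, so Rem.\ \ref{o987543wwwm2kokqwdftxz8gij0i} gives $N_{z_i} = N_{\alpha_{i,0}} = 0$, killing the contributions of the two newly created cells. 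Part (c) follows similarly: the $p$-identity yields $F^{(\Teul_i)}(\alpha) = F^{(\Teul)}(\alpha)$ for $\alpha \in \Ceul_i \cap (\Aeul \setminus \Aeul_0)$, while each of the $d$ new non-$\Aeul_0$ arrows on each side contributes $F=1$, producing the extra $2d$.

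For (d)(i), a direct computation from Def.\ \ref{defdetedgej923hb912edl} gives
\[
\det(\{v_i,z_i\}) \;=\; q_i x_i - Q_i a_i \;=\; -\tfrac{1}{d}\bigl(Q_i\, p(v_i,e) + q_i\, p(v_{3-i},e)\bigr) \;=\; -\tfrac{N_{v_i}^{(\Teul)}}{d},
\]
with the last equality by Rem.\ \ref{52retfxcxntgBmA98WjmHkkj498736762}. The main obstacle throughout is the bookkeeping behind the key identity: one must carefully verify that the factor at $v_i$ coming from ``edges incident to the path but not in it'' coincides in $\Teul$ (for paths to arrows on the far side of $e$) and in $\Teul_i$ (for paths to the new arrows $\alpha_{i,j}$), which works because the single edge-replacement of $e$ by $\{v_i,z_i\}$ at $v_i$ preserves that product. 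Once this is in hand, parts (b) through (d) reduce to routine term-by-term comparisons.
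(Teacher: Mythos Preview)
Your proposal is correct and follows essentially the same approach as the paper: reduce case (ii) to case (i) via $\Edz$ and the identities of Def.\ \ref{983tet3w23hwryhj4djne}, then for case (i) use the good-pair observation \eqref{pd9iui237125fs72b32376iefcbuz} and Rem.\ \ref{o987543wwwm2kokqwdftxz8gij0i} to kill the contributions of $z_i,\alpha_{i,0}$, check $N_w^{(i)}=N_w$ and $F^{(i)}(\alpha)=F(\alpha)$ on the old cells, and compute the determinant directly. The paper simply asserts $N_w^{(i)}=N_w$ and $F^{(i)}(\alpha)=F(\alpha)$ without justification, whereas you actually supply the verification (splitting into old-arrow and new-arrow contributions and matching $d\cdot a_i$ with $p(v_i,e)$); otherwise the arguments are identical.
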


\begin{proof}
Case (i). We use the following notation:  $\xi = e = \{v_1,v_2\}$, the $z_i$ and $\alpha_{i,j}$ are as in Def.\ \ref{jkc65432o396r839f0f6n30},
$\Teul = (\Veul,\Aeul,\Eeul,f,q)$ and $\Teul_i = (\Veul^{(i)},\Aeul^{(i)},\Eeul^{(i)},f^{(i)},q^{(i)})$ ($i=1,2$).
The fact that $\Teul_1, \Teul_2 \in \DT$ was noted in  Def.\ \ref{jkc65432o396r839f0f6n30}, so (a) does not need a proof.

(b) Let $i \in \{1,2\}$.
Let $X_i =  (\Veul^{(i)} \cup \Aeul_0^{(i)}) \setminus \{ z_i, \alpha_{i,0} \}$.
By \eqref{pd9iui237125fs72b32376iefcbuz}, $(z_i,v_i)$ is a good pair of $\Teul_i$; so, by Rem.\ \ref{o987543wwwm2kokqwdftxz8gij0i}, 
$z_i$ and $\alpha_{i,0}$  contribute $0$ to the calculation of $M(\Teul_i)$;
so $M(\Teul_i) = -\sum_{w \in X_i} N_w^{(i)}(\delta_w^{(i)}-2)$.
For each $w \in X_i$ we have  $N_w^{(i)} = N_w$ and $\delta^{(i)}_w = \delta_w$,
so $M(\Teul_i) = -\sum_{w \in X_i} N_w(\delta_w-2)$.
Since $X_1 \cup X_2 = \Veul \cup \Aeul_0$ and $X_1 \cap X_2 = \emptyset$, we obtain $M(\Teul_1) + M(\Teul_2) = M(\Teul)$.

(c) Let $A_1 = \Aeul(v_2,e)$ and $A_2 = \Aeul(v_1,e)$ (see Def.\ \ref{c9v39rf0eX9e4np8glr9t8}). Then $A_1 \cup A_2 = \Aeul \setminus \Aeul_0$ and $A_1 \cap A_2 = \emptyset$,
so $F(\Teul) = \sum_{ \alpha \in A_1 } F( \alpha )  +  \sum_{ \alpha \in A_2 } F( \alpha )$.
Moreover, for each $i \in \{1,2\}$ we have
$\Aeul^{(i)} \setminus \Aeul^{(i)}_0 = A_i \cup \{ \alpha_{i,1}, \dots, \alpha_{i,d} \}$,
$A_i \cap \{ \alpha_{i,1}, \dots, \alpha_{i,d} \} = \emptyset$,
$F^{(i)}( \alpha_{i,j} ) = 1$ for all $j \in \{1,\dots,d\}$, and $F^{(i)}(\alpha) = F(\alpha)$ for all $\alpha \in A_i$,
so $F(\Teul_i) =  \sum_{j=1}^d F^{(i)}( \alpha_{i,j} ) + \sum_{ \alpha \in A_i } F^{(i)}( \alpha )
= d + \sum_{ \alpha \in A_i } F( \alpha )$.  Thus, $F( \Teul_1 ) + F( \Teul_2 ) = F( \Teul ) + 2d$. 

(d) Let $(i,j) \in \{ (1,2), (2,1) \}$.  If we write $e_i = \{v_i,z_i\}$ then
\begin{align*}
\textstyle
\det^{(i)}( e_i ) &= q^{(i)}(e_i,v_i) x_i - Q^{(i)}(e_i,v_i) a_i \\
&= \textstyle q(e,v_i) \left( - \frac{ p(v_j,e) }{d} \right) - Q(e,v_i) \frac{ p(v_i,e) }{d} = -N_{v_i}/d.
\end{align*}

This proves case (i).

Case (ii).  Let $(\Teul_0,e) = \Edz(\Teul,v,\Pscr)$.
We have $\Teul_0 \in \DT$ and $M(\Teul_0) = M(\Teul)$ by Def.\ \ref{983tet3w23hwryhj4djne}.
Since $\Spl(\Teul_0,e) = \{ \Teul_1, \Teul_2 \}$, Case (i) implies that 
$\Teul_1, \Teul_2 \in \DT$,
$M( \Teul_1 ) + M( \Teul_2 ) = M( \Teul_0 ) = M( \Teul )$
and $F( \Teul_1 ) + F( \Teul_2 ) = F( \Teul_0 ) + 2d$.
Since $F(\Teul_0) = F(\Teul)$ is clear, assertions (a--c) are proved.
Assertion (d) follows from Case (i) together with \eqref{8765rfdmpldnpwrfp8jmlrwpuy65dr43gfd4w228834yj}.
\end{proof}

\begin{remark}
Let $\Teul \in \DT$, let $v \in \Veul$, let $e$ be a dead end incident to $v$ and let $a = q(e,v)$.
Consider the splitting of $\Teul$ at $e$, $\Spl(\Teul,e) = \{ \Teul_1, \Teul_2 \}$, as in Def.\ \ref{jkc65432o396r839f0f6n30}.
\begin{equation*} 
\scalebox{.9}{\raisebox{-1\unitlength}{\begin{picture}(24,22)(-15,-17)

\put(-10,0){\circle{1}}
\put(-12,0){\makebox(0,0)[r]{\footnotesize $v$}}
\put(-9.5,0){\vector(1,0){14}}
\put(-3,-1){\makebox(0,0)[t]{\footnotesize $e$}}

\put(-7,1){\makebox(0,0)[b]{\footnotesize $a$}}

\put(5,0){\makebox(0,0)[l]{\footnotesize $(0)$}}
\put(3,1){\makebox(0,0)[b]{\footnotesize $\alpha$}}

\put(-10.35355,0.35355){\line(-1,1){4}}
\put(-10.35355,-0.35355){\line(-1,-1){4}}
\put(-4,-9){\makebox(0,0){$\displaystyle\underbrace{\rule{24\unitlength}{0mm}}_{\Teul}$}}

\end{picture}}} \qquad
\scalebox{.9}{\begin{picture}(84,22)(-42,-17)

\put(-30,0){\circle{1}}
\put(-32,0){\makebox(0,0)[r]{\footnotesize $v$}}
\put(-29.5,0){\line(1,0){14}}
\put(-15,0){\circle*{1}}
\put(-14.5,1){\makebox(0,0)[b]{\footnotesize $z_1$}}
\put(-14.5,0){\line(1,0){7.7}} \put(-7,0.18){\makebox(0,0)[l]{\footnotesize $<\! d$}}
\put(-15,-.5){\vector(0,-1){9.5}}
\put(-14,-9){\makebox(0,0)[l]{\footnotesize $(0)$}}
\put(-21,-14){\makebox(0,0){$\displaystyle\underbrace{\rule{36\unitlength}{0mm}}_{\Teul_1}$}}

\put(-17.5,1.5){\makebox(0,0)[dr]{\footnotesize $x_1$}}
\put(-14.5,-3){\makebox(0,0)[l]{\footnotesize $a_1$}}

\put(17.5,1.5){\makebox(0,0)[dl]{\footnotesize $x_2$}}
\put(15.5,-3){\makebox(0,0)[l]{\footnotesize $a_2$}}

\put(15.5,0){\vector(1,0){14}}
\put(15,0){\circle*{1}}
\put(14.5,1){\makebox(0,0)[b]{\footnotesize $z_2$}}
\put(14.5,0){\line(-1,0){7.7}} \put(7,0.18){\makebox(0,0)[r]{\footnotesize $d >$}}
\put(15,-.5){\vector(0,-1){9.5}}
\put(16,-9){\makebox(0,0)[l]{\footnotesize $(0)$}}
\put(30,0){\makebox(0,0)[l]{\footnotesize $(0)$}}
\put(28,1){\makebox(0,0)[b]{\footnotesize $\alpha$}}
\put(21,-14){\makebox(0,0){$\displaystyle\underbrace{\rule{36\unitlength}{0mm}}_{\Teul_2}$}}

\put(-30.35355,0.35355){\line(-1,1){4}}
\put(-30.35355,-0.35355){\line(-1,-1){4}}

\end{picture}}
\end{equation*}
Note that $(z_1,v)$ (resp.\ $(z_2,\alpha)$) is a good pair of $\Teul_1$ (resp.\ $\Teul_2$).
Assuming that $a\neq 0$, we have $p(v,e)=0$ and $p(\alpha,e) = \frac{N_v}{a}$ (in $\Teul$),
so $d = \left| \frac{N_v}{a} \right|$.  From now-on, assume that $a\neq0$ and $N_v \neq 0$.
Then Def.\ \ref{jkc65432o396r839f0f6n30} gives\footnote{We use the notation $\sgn(x) = \frac{x}{|x|}$ for $x \in \Reals \setminus\{0\}$.}
$$
\textstyle
a_1=0, \quad x_1 = - \sgn(\frac{N_v}{a}), \quad a_2 = \sgn(\frac{N_v}{a}), \quad \text{and} \quad x_2=0 .
$$
It immediately follows that $M(\Teul_2) = \frac{N_v}a$ and $F(\Teul_2) = d = \left| \frac{N_v}{a} \right|$,
so Lemma \ref{dBfo23bn6i8404f8hfwA7ylfaroj} implies that $M(\Teul_1) = M(\Teul) -\frac{N_v}a$
and $F(\Teul_1) = F(\Teul) +  \left| \frac{N_v}{a} \right|$.
\end{remark}

\begin{example}  \label {8f790hbYBiD9485thBdfdcy82fjx3sBhNt87}
Consider the tree $\Teul \in \DT$ and the edge $e$ of $\Teul$ depicted in Ex.\ \ref{894Y3yhbQwc34eeruGVfef8}.
Then $\Spl(\Teul,e) = \{\Teul_1,\Teul_2\}$ is as follows:
$$
\scalebox{.85}{\begin{picture}(50,26)(-25,-18)
\put(0,0){\circle{1}}
\put(-15,0){\circle*{1}}
\put(15,0){\circle*{1}}
\put(.5,0){\line(1,0){14.5}}
\put(-.5,0){\line(-1,0){14.5}}
\put(15,0){\vector(0,-1){10}}
\put(16,-9){\makebox(0,0)[l]{\footnotesize $(0)$}}
\put(-15,0){\vector(0,-1){10}}
\put(-14,-9){\makebox(0,0)[l]{\footnotesize $(0)$}}
\put(-13,.5){\makebox(0,0)[bl]{\tiny $-1$}}
\put(13,.5){\makebox(0,0)[br]{\tiny $-1$}}
\put(16,2){\makebox(0,0)[b]{\tiny $z_{1}$}}
\put(-15,0){\line(-1,0){8.0}} \put(-22.5,0.13){\makebox(0,0)[r]{\tiny $3 >$}}
\put(15,0){\line(1,0){8.0}} \put(22.5,0.13){\makebox(0,0)[l]{\tiny $<3$}}
\put(0,-16){$\Teul_1$}
\end{picture}}
\qquad
\scalebox{.85}{\begin{picture}(45,26)(4,-18)
\put(14,2){\makebox(0,0)[b]{\tiny $z_{2}$}}
\put(15,0){\circle*{1}}
\put(30,0){\circle{1}}
\put(15,0){\line(-1,0){8.0}} \put(7.5,0.13){\makebox(0,0)[r]{\tiny $3 >$}}
\put(15,-.5){\vector(0,-1){9.5}}
\put(16,-9){\makebox(0,0)[l]{\tiny $(0)$}}
\put(17,.5){\makebox(0,0)[bl]{\tiny $-1$}}
\put(31,2){\makebox(0,0)[b]{\tiny $2$}}
\put(29.5,0){\line(-1,0){14}}
\put(30.35355,0.35355){\line(1,1){8}}
\put(30.35355,-0.35355){\line(1,-1){8}}
\put(38,8){\circle*{1}}
\put(38,7.5){\vector(0,-1){9.5}}
\put(39,-1){\makebox(0,0)[l]{\tiny $(0)$}}
\put(36,6){\makebox(0,0)[rb]{\tiny $-4$}}
\put(38,8){\line(1,0){8.0}} \put(45.7,8.07){\makebox(0,0)[l]{\tiny $< 1$}}
\put(38,-8){\line(1,0){8.0}} \put(45.7,-7.94){\makebox(0,0)[l]{\tiny $< 1$}}
\put(38,-8){\circle*{1}}
\put(38,-8.5){\vector(0,-1){9.5}}
\put(39,-17){\makebox(0,0)[l]{\tiny $(0)$}}
\put(35.5,-6){\makebox(0,0)[rt]{\tiny $-7$}}
\put(25,-16){$\Teul_2$}
\end{picture}}
$$
Direct calculation from the pictures of $\Teul,\Teul_1,\Teul_2$ gives:
\begin{equation}  \label {X87G65edfgOhjkc2E093C8j7we}
\begin{array}{ccc}
M(\Teul) = -9, & M(\Teul_{1}) = 0, & M(\Teul_{2}) = -9, \\
F(\Teul) = 5, & F(\Teul_{1}) = 6, &  F(\Teul_{2}) = 5.
\end{array}
\end{equation}
Lemma \ref{dBfo23bn6i8404f8hfwA7ylfaroj} asserts that $M( \Teul_1 ) + M( \Teul_2 ) = M( \Teul )$ and
(since the splitting is of degree $3$) $F( \Teul_1 ) + F( \Teul_2 ) = F( \Teul ) + 6$.
By \eqref{X87G65edfgOhjkc2E093C8j7we}, we see that these equalities do hold in this example.
\end{example}

\subsection*{EN-splitting}

Eisenbud and Neumann have an operation that they call ``splicing'', which is a way to glue trees together in order to form more complicated trees.
We shall now define an operation that is the inverse of splicing (it cuts a tree into two pieces).
We call it the Eisenbud-Neumann splitting, or EN-splitting, denoted ``$\ENSpl$''.

\begin{definition}  \label {654n3227g8f98dnby4yt6yjud7un3e7yhsgwr}
Let $\Teul = (\Veul,\Aeul,\Eeul,f,q) \in \DT$ and let $e = \{v_1,v_2\}$ be an edge of $\Teul$ ($v_1,v_2 \in \Veul \cup \Aeul$).
We proceed to define trees $\Teul_1, \Teul_2 \in \DT$.

Delete $e$ from $\Teul$ (but do not delete $v_1$ and $v_2$); we now have two connected components, $\Ceul_1$ and $\Ceul_2$,
where $\Ceul_1$ contains $v_1$ and $\Ceul_2$ contains $v_2$.
Let $i \in \{1,2\}$; we obtain $\Teul_i$ by performing the following operations on $\Ceul_i$:
\begin{itemize}

\item add one arrow $\alpha_{i}$ and one edge $\{v_i, \alpha_i\}$;
\item let the decoration of $\alpha_{i}$ be $(p_i)$, where $p_i = p(v_i,e)$;
\item let the decoration of the edge $\{v_i,\alpha_{i}\}$ near $v_i$ be $q(e,v_i)$;
\item let the decoration of the edge $\{v_i,\alpha_{i}\}$ near $\alpha_i$ be $1$;
\item all other decorations remain as they are in $\Teul$.
\end{itemize}
This defines $\Teul_i$, and it is obvious that $\Teul_i \in \DT$.
\begin{equation} \label {picture9893647239dj943-3}
\raisebox{-7\unitlength}{\begin{picture}(78,14)(-39,-10)

\put(-20,0){\circle{1}}
\put(-22,0){\makebox(0,0)[r]{\footnotesize $v_1$}}
\put(-19.5,0){\vector(1,0){12}}
\put(-7,0){\makebox(0,0)[l]{\footnotesize $(p_1)$}}
\put(-9,1){\makebox(0,0)[b]{\tiny $\alpha_1$}}
\put(9,1){\makebox(0,0)[b]{\tiny $\alpha_2$}}

\put(20,0){\circle{1}}
\put(22,0){\makebox(0,0)[l]{\footnotesize $v_2$}}
\put(19.5,0){\vector(-1,0){12}}
\put(7,0){\makebox(0,0)[r]{\footnotesize $(p_2)$}}

\put(-20.35355,0.35355){\line(-1,1){4}}
\put(-20.35355,-0.35355){\line(-1,-1){4}}

\put(20.35355,0.35355){\line(1,1){4}}
\put(20.35355,-0.35355){\line(1,-1){4}}

\put(-16,-8){\makebox(0,0){$\displaystyle\underbrace{\rule{27\unitlength}{0mm}}_{\Teul_1}$}}
\put(16,-8){\makebox(0,0){$\displaystyle\underbrace{\rule{27\unitlength}{0mm}}_{\Teul_2}$}}

\end{picture}}
\end{equation}

We use the notation 
$$
\ENSpl(\Teul,e) = \{ \Teul_1, \Teul_2 \} 
$$
and we say that $\{ \Teul_1, \Teul_2 \}$ is the {\it EN-splitting of $\Teul$ at $e$.}
The natural number $d = \gcd(p_1,p_2)$ is called the {\it degree\/} of the EN-splitting $\ENSpl(\Teul,e)$.
We also define the {\it type\/} of $\ENSpl(\Teul,e)$ to be the element $t$ of $\{-1,0,1\}$ given by
$$
t = \begin{cases}
0, & \text{if the number of elements $i$ of $\{1,2\}$ that satisfy $p_i=0$ is even;} \\
-1, & \text{if $p_1 p_2 = 0$ and $p_1+p_2<0$;} \\
1, & \text{if $p_1 p_2 = 0$ and $p_1+p_2>0$.}
\end{cases}
$$
\end{definition}

\begin{definition}
Let $\Teul \in \DT$, let $v \in \Veul$, let $E_v$ denote the set of all edges of $\Teul$ that are incident to $v$,
and let $\Pscr$ be a $2$-prepartition of $E_v$ (see Def.\ \ref{87tr432hb8re85g78s87jbuf}).

Consider the pair $(\Teul_0,e) = \Edz(\Teul,v,\Pscr)$ (see Def.\ \ref{983tet3w23hwryhj4djne}) and note that
it makes sense to consider $\ENSpl(\Teul_0,e)$, the EN-splitting of $\Teul_0$ at $e$ defined in \ref{654n3227g8f98dnby4yt6yjud7un3e7yhsgwr}.

The {\it EN-splitting of $\Teul$ at $(v,\Pscr)$} is denoted $\ENSpl(\Teul,v,\Pscr)$, and is defined by 
$$
\ENSpl(\Teul,v,\Pscr) = \ENSpl(\Teul_0,e) .
$$
Recall from Def.\ \ref{654n3227g8f98dnby4yt6yjud7un3e7yhsgwr} that the EN-splitting $\ENSpl(\Teul_0,e)$ has a degree (which is a natural number)
and a type (which is $-1$, $0$ or $1$).
We define the {\it degree\/} (resp.\ {\it type\/}) of the EN-splitting $\ENSpl(\Teul,v,\Pscr)$ to be equal to
the degree (resp.\ type) of $\ENSpl(\Teul_0,e)$.
\end{definition}

\begin{lemma}  \label {SPLICINGdBfo23bn6i8404f8hfwA7ylfaroj}
Let $\Teul \in \DT$ and suppose that $\xi$ satisfies one of the following conditions:
\begin{enumerati}

\item $\xi$ is an edge of $\Teul$;

\item $\xi = (v, \Pscr)$ for some vertex $v$ of $\Teul$ and some $2$-prepartition $\Pscr$ of the set of all edges of $\Teul$ incident to $v$.

\end{enumerati}
Let $d \in \Nat$ and $t \in \{-1,0,1\}$ be, respectively, the degree and type of the EN-splitting $\ENSpl(\Teul,\xi) = \{ \Teul_1, \Teul_2 \}$.
Then the following hold:
\begin{enumerata}

\item $\Teul_1, \Teul_2 \in \DT$

\item $M( \Teul_1 ) + M( \Teul_2 ) = M( \Teul ) + td$

\item $F( \Teul_1 ) + F( \Teul_2 ) = F(\Teul) + (2 - |t|)d$. 

\item $M(\Teul) + F(\Teul) \equiv \big( M(\Teul_1) + F(\Teul_1) \big) + \big( M(\Teul_2) + F(\Teul_2) \big) \pmod2$.

\end{enumerata}
\end{lemma}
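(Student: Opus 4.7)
The plan is to first reduce case (ii) to case (i) via the $\Edz$ construction: if $\xi = (v, \Pscr)$, let $(\Teul_0, e) = \Edz(\Teul, v, \Pscr)$, so that $\ENSpl(\Teul, \xi) = \ENSpl(\Teul_0, e)$ with the same degree and type, while Def \ref{983tet3w23hwryhj4djne} and Lemma \ref{0v82u395u4i83hrf993md4tj09} give $M(\Teul_0) = M(\Teul)$ and $F(\Teul_0) = F(\Teul)$. From here on I work in case (i), with $\xi = e = \{v_1, v_2\}$. Part (a) is already asserted in Def \ref{654n3227g8f98dnby4yt6yjud7un3e7yhsgwr}, and part (d) is a formal consequence of (b) and (c): the quantity $td + (2-|t|)d = 2d + (t-|t|)d$ is always even, since $t - |t| \in \{-2, 0\}$ for $t \in \{-1, 0, 1\}$.

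The heart of the argument for (b) and (c) is a key factorization. Let $\Ceul_i$ denote the set of $0$-cells of the connected component of $\Teul \setminus \{e\}$ containing $v_i$. By unfolding $P(\gamma_{w, \alpha'})$ along the decomposition $\gamma_{w, \alpha'} = \gamma_{w, v_i} \cup \{e\} \cup \gamma_{v_{3-i}, \alpha'}$ and bookkeeping the $Q$-factors at $v_i$ and $v_{3-i}$ (which become interior vertices of the combined path, so that their $Q$-factors omit $q(e, v_i)$ and $q(e, v_{3-i})$ respectively), one obtains
$$
x_{w, \alpha'} = R_w \cdot \hat x_{v_i, \alpha'} \qquad (w \in \Ceul_i,\ \alpha' \in \Ceul_{3-i} \setminus \Aeul_0),
$$
where $R_w$ is an integer depending only on the $\Ceul_i$-side of $\Teul$. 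Summing over $\alpha'$ gives $\sum_{\alpha' \in \Ceul_{3-i} \setminus \Aeul_0} x_{w, \alpha'} = R_w p_i$, which vanishes when $p_i = 0$. An analogous unfolding inside $\Teul_i$ yields $x_{w, \alpha_i}^{(i)} = R_w p_i$ for $w \in \Ceul_i$ and $x_{\alpha_i, \alpha'}^{(i)} = \hat x_{v_{3-i}, \alpha'}$ for $\alpha' \in \Ceul_i \setminus \Aeul_0$. Together these identities give $N_w^{(i)} = N_w$ for every $w \in \Ceul_i \cap (\Veul \cup \Aeul_0)$ (whether $p_i$ is zero or not), and $N_{\alpha_i}^{(i)} = p_{3-i}$ in the case $p_i = 0$, when $\alpha_i \in \Aeul_0^{(i)}$ enters the $M$-sum.

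Since $\delta_w^{(i)} = \delta_w$ for $w \in \Ceul_i$ and $\delta_{\alpha_i}^{(i)} - 2 = -1$, summing multiplicity contributions yields $M(\Teul_1) + M(\Teul_2) - M(\Teul) = \sum_{i\,:\,p_i = 0} p_{3-i}$, which a case check against the definition of $t$ in Def \ref{654n3227g8f98dnby4yt6yjud7un3e7yhsgwr} equates with $td$ in all four sign-cases for $(p_1, p_2)$. For (c), the $F$-values of the original arrows of $\Ceul_i$ are unchanged in $\Teul_i$, and $\alpha_i$ contributes to $F(\Teul_i)$ exactly when $p_i \neq 0$; the identity $x_{\alpha_i, \alpha'}^{(i)} = \hat x_{v_{3-i}, \alpha'}$ gives $p^{(i)}(\alpha_i, \{v_i, \alpha_i\}) = p_{3-i}$, hence $F^{(i)}(\alpha_i) = \gcd(p_i, p_{3-i}) = d$, and the same case check on $t$ delivers $F(\Teul_1) + F(\Teul_2) = F(\Teul) + (2 - |t|)d$. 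I expect the main obstacle to be establishing the factorization $x_{w, \alpha'} = R_w \cdot \hat x_{v_i, \alpha'}$ and its cousin $x_{\alpha_i, \alpha'}^{(i)} = \hat x_{v_{3-i}, \alpha'}$: each requires a careful comparison of the products defining $P(\gamma)$ at $v_i$ and $v_{3-i}$, tracking whether those vertices appear as endpoints or as interior vertices in the relevant subpaths and whether the decoration $q(e, v_i)$ should be included or excluded, with special attention to the boundary cases $w = v_i$ and subpaths of length one.
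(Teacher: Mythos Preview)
Your proposal is correct and follows essentially the same route as the paper's proof. Both arguments reduce case (ii) to case (i) via $\Edz$, derive (d) formally from (b) and (c), and establish (b) and (c) by showing that $N_w^{(i)} = N_w$ and $F^{(i)}(\alpha) = F(\alpha)$ for the relevant $w$ and $\alpha$, then accounting for the extra contribution of $\alpha_i$ depending on whether $p_i = 0$.

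The only difference is one of detail: the paper simply asserts the identities $N_w^{(i)} = N_w$, $F^{(i)}(\alpha) = F(\alpha)$, $N_{\alpha_i}^{(i)} = p_{3-i}$, and $F^{(i)}(\alpha_i) = d$ without justification, whereas you sketch a proof via the factorization $x_{w,\alpha'} = R_w \cdot \hat x_{v_i,\alpha'}$ and its companion $x_{\alpha_i,\alpha'}^{(i)} = \hat x_{v_{3-i},\alpha'}$. Your factorizations are correct (the choice of $\hat x_{v_{3-i},\alpha'}$ rather than $\hat x_{v_i,\alpha'}$ in the second one is right, since in $\Teul_i$ the vertex $v_i$ becomes an interior vertex of $\gamma_{\alpha_i,\alpha'}$ with the same incident-edge pattern it had in $\gamma_{v_{3-i},\alpha'}$ in $\Teul$). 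The boundary case $w = v_i$ works out to $R_{v_i} = Q(e,v_i)$, consistent with the general relation $x_{v_i,\alpha'} = Q(e,v_i)\hat x_{v_i,\alpha'}$ from Def.~\ref{c9v39rf0eX9e4np8glr9t8}. One small notational slip: you write ``$P(\gamma_{w,\alpha'})$'' for a quantity the paper does not name; presumably you mean the product $\prod_{\epsilon \in E} q(\epsilon,\gamma_{w,\alpha'})$ appearing in the definition of $x_{w,\alpha'}$.
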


\begin{proof}
Note that (d) follows from (a--c) and the fact that $t + (2 - |t|)$ is even.
So it suffices to prove (a--c).

Case (i). We use the following notation:  $\xi = e = \{v_1,v_2\}$,
$\alpha_1, \alpha_2, p_1, p_2$ are as in Def.\ \ref{654n3227g8f98dnby4yt6yjud7un3e7yhsgwr},
$\Teul = (\Veul,\Aeul,\Eeul,f,q)$ and $\Teul_i = (\Veul^{(i)},\Aeul^{(i)},\Eeul^{(i)},f^{(i)},q^{(i)})$ ($i=1,2$).
The fact that $\Teul_1, \Teul_2 \in \DT$ was noted in  Def.\ \ref{654n3227g8f98dnby4yt6yjud7un3e7yhsgwr}, so (a) does not need a proof.
We prove (b) and (c) together.
Let $i \in \{1,2\}$ and let $j$ be the unique element of $\{1,2\} \setminus \{i\}$.
Let $X_i = \setspec{ x \in \Veul \cup \Aeul_0 }{ \text{$e$ is not in $\gamma_{v_i,x}$} }$
and $A_i = \setspec{ \alpha \in \Aeul \setminus \Aeul_0 }{ \text{$e$ is not in $\gamma_{v_i,\alpha}$} }$.
Then
$$
\Veul^{(i)} \cup \Aeul_0^{(i)} =
\begin{cases}
X_i, & \text{if $p_i \neq 0$}, \\
X_i \cup \{ \alpha_i \}, & \text{if $p_i = 0$},
\end{cases}
\quad
\Aeul^{(i)} \setminus \Aeul_0^{(i)} =
\begin{cases}
A_i \cup \{ \alpha_i \}, & \text{if $p_i \neq 0$}, \\
A_i, & \text{if $p_i = 0$}.
\end{cases}
$$
For every $x \in X_i$, we have $N_x = N^{(i)}_x$ and $\delta_x = \delta^{(i)}_x$;
for every $\alpha \in A_i$, we have $F(\alpha) = F^{(i)}(\alpha)$.
Moreover, if $p_i=0$ then $N^{(i)}_{\alpha_i} = p_j = td$,
and if $p_i\neq0$ then $F^{(i)}(\alpha_i) = \gcd\big( p_i, p^{(i)}( \alpha_i, \{\alpha_i,v_i\} ) \big) = \gcd( p_i, p_j ) = d$.
Thus,
\begin{align*}
M(\Teul_i) &=
\begin{cases}
\sum_{x \in X_i} N_x(\delta_x-2), & \text{if $p_i \neq 0$}, \\
td + \sum_{x \in X_i} N_x(\delta_x-2), & \text{if $p_i = 0$},
\end{cases} \\
F(\Teul_i) &=
\begin{cases}
d + \sum_{\alpha \in A_i} F(\alpha), & \text{if $p_i \neq 0$}, \\
\sum_{\alpha \in A_i} F(\alpha), & \text{if $p_i = 0$}.
\end{cases}
\end{align*}
Since $X_1 \cup X_2 = \Veul \cup \Aeul_0$ and $X_1 \cap X_2 = \emptyset$, we have $M(\Teul) = \sum_{x \in X_1} N_x(\delta_x-2) + \sum_{x \in X_2} N_x(\delta_x-2)$;
since $A_1 \cup A_2 = \Aeul \setminus \Aeul_0$ and $A_1 \cap A_2 = \emptyset$, we have $F(\Teul) = \sum_{\alpha \in A_1} F(\alpha) + \sum_{\alpha \in A_2} F(\alpha)$.
Assertions (b) and (c) follow from these observations.
This proves case (i).

Case (ii).  Let $(\Teul_0,e) = \Edz(\Teul,v,\Pscr)$.
We have $\Teul_0 \in \DT$ and $M(\Teul_0) = M(\Teul)$ by Def.\ \ref{983tet3w23hwryhj4djne}.
Since $\ENSpl(\Teul_0,e) = \{ \Teul_1, \Teul_2 \}$, Case (i) implies that 
$\Teul_1, \Teul_2 \in \DT$,
$M( \Teul_1 ) + M( \Teul_2 ) = M( \Teul_0 )+td = M( \Teul )+td$
and $F( \Teul_1 ) + F( \Teul_2 ) = F( \Teul_0 ) + (2-|t|)d$.
Since $F(\Teul_0) = F(\Teul)$ is clear, we see that (a--c) are true.
\end{proof}

\begin{example}
Consider the following $\Teul \in \DT$ and the edge $e$ of $\Teul$:
\begin{equation*}
\Teul\,:\ \ 
\raisebox{-17mm}{\scalebox{.85}{\begin{picture}(40,38.5)(-12,-6.5)

\put(0,-4.5){\line(0,1){19}}
\put(0,15.5){\line(0,1){14}}

\put(-.5,28){\makebox(0,0)[rt]{\tiny $5$}}
\put(-.5,17){\makebox(0,0)[rb]{\tiny $2$}}
\put(-.5,13){\makebox(0,0)[rt]{\tiny $3$}}
\put(-.5,-3){\makebox(0,0)[rb]{\tiny $3$}}
\put(.5,22.5){\makebox(0,0)[l]{\tiny $e$}}

\put(0,-5){\circle{1}}
\put(0,15){\circle{1}}
\put(0,30){\circle{1}}

\put(-.5,-5){\vector(-1,0){8}}
\put(-9,-5){\makebox(0,0)[r]{\tiny $(0)$}}
\put(-2,-5.5){\makebox(0,0)[rt]{\tiny $2$}}

\put(-.5,30){\vector(-1,0){8}}
\put(-9,30){\makebox(0,0)[r]{\tiny $(0)$}}
\put(-2,30.5){\makebox(0,0)[rb]{\tiny $2$}}

\put(.5,-5){\vector(1,0){8}}
\put(9,-5){\makebox(0,0)[l]{\tiny $(3)$}}

\put(.5,30){\vector(1,0){8}}

\put(0.4472,15.2236){\line(2,1){15}}
\put(0.4472,14.7764){\line(2,-1){15}}
\put(16,23){\circle{1}}
\put(16,7){\circle{1}}

\put(16,22.5){\vector(0,-1){8}}
\put(16.5,14){\makebox(0,0)[l]{\tiny $(0)$}}
\put(16.5,21){\makebox(0,0)[lt]{\tiny $2$}}
\put(16.5,23){\vector(1,0){8}}
\put(25,23){\makebox(0,0)[l]{\tiny $(2)$}}

\put(16,6.5){\vector(0,-1){8}}
\put(16.5,-2){\makebox(0,0)[l]{\tiny $(0)$}}
\put(16.5,5){\makebox(0,0)[lt]{\tiny $2$}}
\put(16.5,7){\vector(1,0){8}}

\put(13,22.5){\makebox(0,0)[dr]{\tiny $13$}}
\put(13,8){\makebox(0,0)[tr]{\tiny $15$}}

\end{picture}}}
\end{equation*}
We have $\ENSpl(\Teul,e) = \{ \Teul_1, \Teul_2 \}$ where
\begin{equation*}
\Teul_1\,:\ \ 
\raisebox{-4mm}{\scalebox{.85}{\begin{picture}(22,14)(-12,18)

\put(-.5,28){\makebox(0,0)[rt]{\tiny $5$}}
\put(0,30){\circle{1}}

\put(-.5,30){\vector(-1,0){8}}
\put(-9,30){\makebox(0,0)[r]{\tiny $(0)$}}
\put(-2,30.5){\makebox(0,0)[rb]{\tiny $2$}}

\put(.5,30){\vector(1,0){8}}
\put(0,29.5){\vector(0,-1){8}}
\put(0,21){\makebox(0,0)[t]{\tiny $(24)$}}

\end{picture}}}
\qquad\quad
\Teul_2\,:\ \ 
\raisebox{-17mm}{\scalebox{.85}{\begin{picture}(40,33)(-12,-6.5)

\put(0,-4.5){\line(0,1){19}}
\put(0,15.5){\vector(0,1){8}}
\put(0,26){\makebox(0,0)[t]{\tiny $(2)$}}

\put(-.5,17){\makebox(0,0)[rb]{\tiny $2$}}
\put(-.5,13){\makebox(0,0)[rt]{\tiny $3$}}
\put(-.5,-3){\makebox(0,0)[rb]{\tiny $3$}}
\put(-.5,5){\makebox(0,0)[r]{\tiny $e$}}

\put(0,-5){\circle{1}}
\put(0,15){\circle{1}}

\put(-.5,-5){\vector(-1,0){8}}
\put(-9,-5){\makebox(0,0)[r]{\tiny $(0)$}}
\put(-2,-5.5){\makebox(0,0)[rt]{\tiny $2$}}

\put(.5,-5){\vector(1,0){8}}
\put(9,-5){\makebox(0,0)[l]{\tiny $(3)$}}

\put(0.4472,15.2236){\line(2,1){15}}
\put(0.4472,14.7764){\line(2,-1){15}}
\put(16,23){\circle{1}}
\put(16,7){\circle{1}}

\put(16,22.5){\vector(0,-1){8}}
\put(16.5,14){\makebox(0,0)[l]{\tiny $(0)$}}
\put(16.5,21){\makebox(0,0)[lt]{\tiny $2$}}
\put(16.5,23){\vector(1,0){8}}
\put(25,23){\makebox(0,0)[l]{\tiny $(2)$}}

\put(16,6.5){\vector(0,-1){8}}
\put(16.5,-2){\makebox(0,0)[l]{\tiny $(0)$}}
\put(16.5,5){\makebox(0,0)[lt]{\tiny $2$}}
\put(16.5,7){\vector(1,0){8}}

\put(13,22.5){\makebox(0,0)[dr]{\tiny $13$}}
\put(13,8){\makebox(0,0)[tr]{\tiny $15$}}

\end{picture}}}
\end{equation*}
Calculating directly from these pictures, we find:
\begin{equation}  \label {876543wefoiujnlg3u29n84d765rfbg}
\begin{array}{ccc}
M(\Teul) = -273, & M(\Teul_{1}) = -29, & M(\Teul_{2}) = -244, \\
F(\Teul) = 5, & F(\Teul_{1}) = 3, &  F(\Teul_{2}) = 6.
\end{array}
\end{equation}
As the EN-splitting $\ENSpl(\Teul,e)$ has degree $d=2$ and type $t=0$, Lemma \ref{SPLICINGdBfo23bn6i8404f8hfwA7ylfaroj} asserts that 
$M( \Teul_1 ) + M( \Teul_2 ) = M( \Teul )$ and $F( \Teul_1 ) + F( \Teul_2 ) = F(\Teul) + 4$.
By \eqref{876543wefoiujnlg3u29n84d765rfbg}, we see that these equalities are verified in this example.
\end{example}

\section{The number $M(\Teul) + F(\Teul)$ is even}
\label{SEC:ThenumberMTeulFTeuliseven}

The EN-splitting is used in the proof of the following result.

\begin{theorem} \label {8374te23nfAiUoMnbed53oifn}
If $\Teul \in \DT$ then $M(\Teul) + F(\Teul)$ is even.
\end{theorem}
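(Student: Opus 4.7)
I plan to prove this by strong induction on $|\Veul|$, using the additivity of $M+F$ modulo $2$ provided by Lemma \ref{SPLICINGdBfo23bn6i8404f8hfwA7ylfaroj}(d): for any EN-splitting $\ENSpl(\Teul,\xi) = \{\Teul_1,\Teul_2\}$,
\[
M(\Teul) + F(\Teul) \equiv \bigl(M(\Teul_1)+F(\Teul_1)\bigr) + \bigl(M(\Teul_2)+F(\Teul_2)\bigr) \pmod{2}.
\]
Thus it suffices to reduce to trees small enough that $M+F$ can be computed by hand.

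For the inductive step, assume $|\Veul|\ge 2$. Since every arrow has valency $1$, the interior nodes of any path in $\Teul$ lie in $\Veul$; hence any path between two distinct vertices contains an edge $e=\{v_1,v_2\}$ with $v_1,v_2\in\Veul$. Applying $\ENSpl(\Teul,e)=\{\Teul_1,\Teul_2\}$, I find that $v_{3-i}\notin\Veul^{(i)}$, so $|\Veul^{(i)}|<|\Veul|$ for each $i$; the inductive hypothesis combined with the displayed congruence closes the step.

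The base cases are $|\Veul|\in\{0,1\}$. When $|\Veul|=0$, the valency condition together with connectedness of $(\Veul\cup\Aeul,\Eeul)$ forces $|\Aeul|\in\{0,2\}$, and in each sub-case $M+F$ evaluates to $0$, $c+|c|$, or $2\gcd(c_1,c_2)$, all visibly even. When $|\Veul|=1$, the tree is a \emph{star} with center $v$ and $n=|\Aeul|$ arrows, whose edge decorations $q_i=q(\{v,\alpha_i\},v)$ are pairwise coprime by property (vi) of Def \ref{cJioI0qobfoWvvnxzhb83edfAiE7oqnkl}; in particular at most one of them is even. A secondary induction on $n$ handles this case: $n\le 2$ is immediate by direct calculation, while for $n\ge 4$ I apply $\ENSpl(\Teul,v,\Pscr)$ with a $2$-prepartition $\Pscr=\{E_1,E_2\}$ satisfying $|E_1|,|E_2|\ge 2$, producing two smaller stars with $|E_i|+1\le n-1$ arrows each.

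The principal obstacle is the remaining case $n=3$, because any EN-splitting of such a tree yields one trivial piece together with another star on three arrows (with different decorations), so no EN-splitting strictly reduces $n$. This case must be finished by a direct parity computation, organised by sub-cases on how many of $f(\alpha_1),f(\alpha_2),f(\alpha_3)$ vanish. The coprimality of $q_1,q_2,q_3$ (so at most one is even), together with the elementary fact that $\gcd(c,q)$ is odd whenever $q$ is odd, brings the finite parity enumeration to the conclusion that $M(\Teul)+F(\Teul)$ is even in every configuration, completing the base case and hence the proof.
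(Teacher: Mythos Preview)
Your proposal is correct and follows essentially the same outer strategy as the paper: induct on $|\Veul|$, and for $|\Veul|\ge 2$ EN-split at an edge joining two vertices, invoking Lemma~\ref{SPLICINGdBfo23bn6i8404f8hfwA7ylfaroj}(d). The only difference is in the one-vertex base case. The paper handles an arbitrary star directly, by a single parity analysis organised according to which (at most one) edge decoration near the center is even; this covers all $n$ at once. You instead push the splitting machinery further, using the vertex EN-splitting $\ENSpl(\Teul,v,\Pscr)$ (case~(ii) of Lemma~\ref{SPLICINGdBfo23bn6i8404f8hfwA7ylfaroj}, which the paper proves but does not actually exploit in its own proof) to reduce a star with $n\ge 4$ arrows to two strictly smaller stars, leaving only $n\le 3$ to be checked by hand. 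Both routes terminate in a small finite parity enumeration of comparable difficulty; your version trades a slightly longer general-star computation for an extra inductive layer plus the $n=3$ case.
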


\begin{proof}
We use the following temporary notations:
for each $\Teul \in \DT$, let $\| \Teul \|$ denote the number of vertices of $\Teul$ and let $\Phi(\Teul) = M(\Teul) + F(\Teul)$.
Also remember that {\bf all congruences in this proof are tacitly assumed to be modulo \boldmath $2$.}

Suppose that $\Teul \in \DT$ satisfies $\| \Teul \| > 1$.
Then we can choose an edge $e = \{v_1,v_2\}$ of $\Teul$ such that $v_1,v_2$ are vertices.
Let $\{\Teul_1,\Teul_2\} = \ENSpl(\Teul,e)$;
then Lemma \ref{SPLICINGdBfo23bn6i8404f8hfwA7ylfaroj} implies that $\Teul_1, \Teul_2 \in \DT$ and $\Phi(\Teul) \equiv \Phi(\Teul_1) + \Phi(\Teul_2)$,
and it is clear that $\| \Teul_i \| < \| \Teul \|$ for $i=1,2$.
So, in order to prove the Theorem, it suffices to verify that
\begin{equation} \label {9o8b87nm6545n43m2dfvcxzswedcvgt56yhbnju78ijmki90plkjhre}
\text{$\Phi(\Teul) \equiv 0$ for every $\Teul \in \DT$ such that $\| \Teul \| \le 1$.}
\end{equation}
Let us prove \eqref{9o8b87nm6545n43m2dfvcxzswedcvgt56yhbnju78ijmki90plkjhre}. Let $\Teul \in \DT$ be such that $\| \Teul \| = 0$.
If $\Teul$ is empty then $M(\Teul) = 0 = F(\Teul)$, so $\Phi(\Teul) \equiv 0$.
Assume that $\Teul$ is not empty.
Then $\Teul$ is $(i) \! \longleftrightarrow \! (j)$ for some $i,j \in \Integ$.
If $i=0=j$ then again $M(\Teul) = 0 = F(\Teul)$, so $\Phi(\Teul) \equiv 0$.
If $i,j \neq 0$ then $M(\Teul) = 0$ and $F(\Teul) = \gcd(i,j) + \gcd(i,j)$ is even, so $\Phi(\Teul) \equiv 0$.
If $i=0$ and $j\neq0$ then $M(\Teul) \equiv j$ and $F(\Teul) = \gcd(j,0) = |j| \equiv j$, so  $\Phi(\Teul) \equiv 0$.
So $\Phi(\Teul) \equiv 0$ for every $\Teul \in \DT$ such that $\| \Teul \| = 0$.

Let $\Teul \in \DT$ be such that $\| \Teul \| = 1$. Let $v$ be the unique vertex of $\Teul$
and let $\alpha_1, \dots, \alpha_m, \beta_1, \dots, \beta_n$ ($m,n\ge0$) be the 
distinct arrows of $\Teul$, where each $\alpha_i$ is decorated by $(0)$ and each $\beta_i$ is decorated by $(s_i)$ for some $s_i \in \Integ \setminus \{0\}$.
Also let $a_i = q( \{v,\alpha_i\},v )$ for all $i \in \{1,\dots,m\}$ and $b_i = q( \{v,\beta_i\},v )$ for all $i \in \{1,\dots,n\}$.
$$
{\begin{picture}(34,16)(-16,-8)
\put(-30,0){$\Teul\,:$}

\put(0,0){\circle{1}}
\put(0,-2){\makebox(0,0)[t]{\tiny $v$}}

\put(-10.5,6.5){\makebox(0,0)[b]{\footnotesize $\alpha_1$}}
\put(-13,6){\makebox(0,0)[r]{\footnotesize $(0)$}}
\put(-10.5,-6.5){\makebox(0,0)[t]{\footnotesize $\alpha_m$}}
\put(-13,-6){\makebox(0,0)[r]{\footnotesize $(0)$}}
\put(-5,2.25){\makebox(0,0)[bl]{\tiny $a_1$}}
\put(-2.5,-1.25){\makebox(0,0)[br]{\tiny $a_m$}}
\put(-9,1){\makebox(0,0){\footnotesize $\vdots$}}
\put(9,1){\makebox(0,0){\footnotesize $\vdots$}}

\put(11,6.5){\makebox(0,0)[b]{\footnotesize $\beta_1$}}
\put(13,6){\makebox(0,0)[l]{\footnotesize $(s_1)$}}
\put(11,-6.5){\makebox(0,0)[t]{\footnotesize $\beta_n$}}
\put(13,-6){\makebox(0,0)[l]{\footnotesize $(s_n)$}}
\put(4.5,2.25){\makebox(0,0)[br]{\tiny $b_1$}}
\put(2.5,-1.5){\makebox(0,0)[bl]{\tiny $b_n$}}

\put(0.4472,0.2236){\vector(2,1){12}}
\put(0.4472,-0.2236){\vector(2,-1){12}}
\put(-0.4472,0.2236){\vector(-2,1){12}}
\put(-0.4472,-0.2236){\vector(-2,-1){12}}

\end{picture}}
$$

Assume that $a_i \equiv 0$ for some $i$. We may assume that $a_1 \equiv 0$.
Then $a_i \equiv 1$ for all $i>1$ and $b_i \equiv 1$ for all $i$, so
$N_{\alpha_1} \equiv \sum_{i=1}^n s_i$, $N_{\alpha_i} \equiv 0$ for all $i>1$, and $N_v\equiv0$; so $M(\Teul) \equiv  \sum_{i=1}^n s_i$.
To compute $F(\Teul)$, we need the following trivial fact:
\begin{equation} \label {9875m12m3n45bv5x68swiuehgfehwg345v5syizjr6}
\text{if $a,a',b,b' \in \Integ$ satisfy $a \equiv a'$ and $b \equiv b'$, then $\gcd(a,b) \equiv \gcd(a',b')$.}
\end{equation}
Let $i \in \{1,\dots,n\}$; since $p(\beta_i, \{\beta_i,v\}) \equiv 0$, \eqref{9875m12m3n45bv5x68swiuehgfehwg345v5syizjr6} implies that
$$
F(\beta_i) = \gcd\big( s_i, p(\beta_i, \{\beta_i,v\}) \big) \equiv \gcd(s_i,0) \equiv s_i .
$$
So $F(\Teul) \equiv \sum_{i=1}^n s_i$ and hence $\Phi(\Teul) \equiv 0$.
This shows that  $\Phi(\Teul) \equiv 0$ whenever $a_i \equiv 0$ for some $i$.

Assume that $b_i \equiv 0$ for some $i$. We may assume that $b_1 \equiv 0$.
Then $b_i \equiv 1$ for all $i>1$ and $a_i \equiv 1$ for all $i$, so $N_v \equiv s_1$ and for each $i \in \{1,\dots,m\}$ we have
$N_{\alpha_i} \equiv s_1$. Thus, $M(\Teul) \equiv (m+n)s_1 + \sum_{i=1}^m s_1 \equiv n s_1$.
Using \eqref{9875m12m3n45bv5x68swiuehgfehwg345v5syizjr6}, we find that
$F(\beta_1) = \gcd\big( s_1, p(\beta_1, \{\beta_1,v\}) \big) \equiv \gcd\big( s_1, \sum_{i=2}^n s_i \big)$
and, for each $i>1$,
$F(\beta_i) = \gcd\big( s_i, p(\beta_i, \{\beta_i,v\}) \big) \equiv \gcd( s_i, s_1 )$, so 
$$
\textstyle
F(\Teul) \equiv \gcd\big( s_1, \sum_{i=2}^n s_i \big)  +  \sum_{i=2}^n \gcd(s_i,s_1) .
$$
If $s_1 \equiv 1$ then $F(\Teul) \equiv 1 +  \sum_{i=2}^n 1 \equiv n \equiv ns_1$,
and if $s_1 \equiv 0$ then $F(\Teul) \equiv \gcd\big( 0, \sum_{i=2}^n s_i \big)  +  \sum_{i=2}^n \gcd(s_i,0)
\equiv | \sum_{i=2}^n s_i |  +  \sum_{i=2}^n |s_i| \equiv \sum_{i=2}^n s_i  +  \sum_{i=2}^n s_i \equiv 0 \equiv ns_1$;
so $F(\Teul) \equiv ns_1$ in both cases. Consequently, $\Phi(\Teul)\equiv 0$
whenever $b_i \equiv 0$ for some $i$.

The last case to consider is $a_i \equiv 1$ and $b_j \equiv 1$ for all $i,j$.
Define $S = \sum_{i=1}^n s_i$. Then $N_{\alpha_i} \equiv S$ for all $i$ and $N_v \equiv S$, so $M(\Teul) \equiv (m+n)S + \sum_{i=1}^m S \equiv nS$.
For each $i \in \{1,\dots,n\}$, $F(\beta_i) \equiv \gcd( s_i, S-s_i ) \equiv \gcd( s_i, S )$, so 
$$
\textstyle
F(\Teul) \equiv \sum_{i=1}^n \gcd(s_i,S) .
$$
If $S\equiv0$ then $F(\Teul) \equiv \sum_{i=1}^n \gcd(s_i,0) \equiv \sum_{i=1}^n |s_i| \equiv \sum_{i=1}^n s_i \equiv 0 \equiv nS$,
and if $S\equiv1$ then $F(\Teul) \equiv \sum_{i=1}^n \gcd(s_i,1) \equiv \sum_{i=1}^n 1 \equiv n \equiv n S$; so $F(\Teul) \equiv nS$ in both cases.
Consequently, $\Phi(\Teul)\equiv 0$.

This proves \eqref{9o8b87nm6545n43m2dfvcxzswedcvgt56yhbnju78ijmki90plkjhre}, and completes the proof of the Theorem.
\end{proof}

\section{Genus and $\delta$-invariant}
\label {SEC:Genusanddeltainvariant}

\newlength{\longsomme}
\settowidth{\longsomme}{$\scriptstyle X \in \Pscr$}
\setlength{\longsomme}{1.3\longsomme}

\newcommand{\undersum}[1]{_{\makebox[\longsomme]{$\scriptstyle {#1}$}}}

\begin{notation}  \label {8927476hbgFfxfob93746dBBNE376rFuf82190uhsSQ}
Given $\Teul \in \DT$, define
$$
\textstyle g(\Teul) = \frac12 ( 2 - M(\Teul) - F(\Teul) )
\quad \text{and} \quad
\textstyle \delta(\Teul) = \frac12 ( F(\Teul) - M(\Teul) )
$$
and note that $g(\Teul), \delta(\Teul) \in \Integ$ by Theorem \ref{8374te23nfAiUoMnbed53oifn}.
We refer to these integers as the {\it genus\/} and {\it $\delta$-invariant\/} of $\Teul$, respectively.
\end{notation}

We shall now define (see \ref{pod293805h4s7b2w9310jdcjd}) the class $\DTr$ of rooted decorated trees and  the class $\DTpr$ of pseudo-rooted decorated trees,
where $\DTr \subset \DTpr$.
In fact we are mostly interested in $\DTr$, and we introduce the larger class $\DTpr$
because it is sometimes necessary to use the theory of pseudo-rooted trees for proving results about rooted trees.
Given $\Teul \in \DTpr$ and a nonempty subset $X$ of $\Aeul \setminus \Aeul_0$, an element $\Teul_X$ of $\DTpr$ is defined in \ref{092198c45w6evXgb4r8rhfShbg48}.
Then the remainder of the section is devoted to finding formulas that,
given $\Teul \in \DTpr$ and a partition $\{X_1, \dots, X_n\}$ of $\Aeul \setminus \Aeul_0$,
express $M(\Teul)$ (resp.\ $g(\Teul)$, $\delta(\Teul)$)
in terms of the numbers $M(\Teul_{X_i})$  (resp.\ $g(\Teul_{X_i})$, $\delta(\Teul_{X_i})$).

\begin{definition}  \label {pod293805h4s7b2w9310jdcjd}
Let $\Teul = (\Veul, \Aeul, \Eeul, f, q) \in \DT$.
A \textit{pseudo-root} of $\Teul$  is a vertex $v_0 \in \Veul$ that satisfies:
\begin{enumerati}

\item $q(e,v_0)=1$ for all edges $e$ incident to $v_0$;

\item for every $v \in \Veul \setminus \{v_0\}$, at most one edge $e$ incident to $v$ and not in $\gamma_{v_0,v}$
satisfies $q(e,v) \neq 1$.

\end{enumerati}
A \textit{root} of $\Teul$  is a vertex $v_0 \in \Veul$ that satisfies (i), (ii) and:
\begin{enumerati}

\item[(iii)] \label {condiiiofdefroot}
for every $v \in \Veul \setminus \{v_0\}$, all edges $e$ incident to $v$ and not in $\gamma_{v_0,v}$
satisfy $q(e,v) \ge 1$.

\end{enumerati}

A {\it decorated pseudo-rooted tree} (resp.\ a {\it decorated rooted tree\/}) is a tuple 
$(\Veul, \Aeul, \Eeul, f, q, v_0)$ such that $(\Veul, \Aeul, \Eeul, f, q) \in \DT$
and $v_0$ is a pseudo-root (resp.\ a root) of $(\Veul, \Aeul, \Eeul, f, q)$.
We write $\DTpr$ for the set of decorated pseudo-rooted trees,
and $\DTr$ for the set of decorated rooted trees; it is clear that $\DTr \subset \DTpr$.

If $\Teul = (\Veul, \Aeul, \Eeul, f, q, v_0) \in \DTpr$, we say that $v_0$ is {\it the\/} pseudo-root of $\Teul$.

If $\Teul = (\Veul, \Aeul, \Eeul, f, q, v_0) \in \DTr$, we say that $v_0$ is {\it the\/} root of $\Teul$.
\end{definition}

\begin{definition}  \label {092198c45w6evXgb4r8rhfShbg48}
Let $\Teul = (\Veul, \Aeul, \Eeul, f, q, v_0) \in \DTpr$.
For each nonempty subset $X$ of $\Aeul \setminus \Aeul_0$, we define an element $\Teul_X$ of $\DTpr$ as follows.
We begin by defining a tree $\Teul_X' \in \DT$ by declaring:
\begin{itemize}

\item the set of arrows of $\Teul_X'$ is $X$;

\item the vertices of $\Teul_X'$ are the vertices of $\Teul$ that are in $\gamma_{v_0,\alpha}$ for some $\alpha \in X$
(so $v_0$ is a vertex of $\Teul_X'$);

\item the edges of $\Teul_X'$ are the edges of $\Teul$ that are in $\gamma_{v_0,\alpha}$ for some $\alpha \in X$;

\item the arrows and edges of $\Teul_X'$ have the same decorations as in $\Teul$.

\end{itemize}
This defines $\Teul_X' \in \DT$.
For each vertex $v$ of $\Teul_X'$, define the integer $b_v = \prod_{e \in E_v^X} q(e,v)$
where $E_v^X$ denotes the set of edges $e$ of $\Teul$ that are incident to $v$ but
that are not edges of $\Teul_X'$, and where the numbers $q(e,v)$ are those of $\Teul$.

Finally, we obtain $\Teul_X$ from $\Teul_X'$ as follows: for each vertex $v$ of $\Teul_X'$ such that $b_v \neq 1$,
add an arrow $\alpha_v$ decorated by $(0)$ and an edge $\{v,\alpha_v\}$ whose decoration near $v$ (resp.\ near $\alpha_v$)
is equal to $b_v$ (resp.\ to $1$).
We make $\Teul_X$ an element of $\DTpr$ by declaring that $v_0$ is its pseudo-root.

If $|X|=1$, we simplify the notation: for each $\alpha \in \Aeul \setminus \Aeul_0$, we define $\Teul_\alpha = \Teul_{ \{\alpha\} }$.
\end{definition}

\begin{example}  \label {u98mwnbbpwe7gcc342}
Let $\Teul \in \DTpr$ be the following tree:
$$
\scalebox{.7}{\begin{picture}(110,26)(-55,-13)
\multiput(-40,0)(20,0){5}{\circle{1}}
\put(0,10){\circle{1}}
\put(-40,10){\circle{1}}
\put(-40,-10){\circle{1}}
\multiput(-39.5,0)(20,0){4}{\line(1,0){19}}
\put(0,.5){\line(0,1){9}}
\put(-40,-9.5){\line(0,1){9}}
\put(-40,.5){\line(0,1){9}}
\put(-40.5,0){\vector(-1,0){9.5}}
\put(-39.5,-10){\line(1,0){19}}
\put(-20,-10){\circle{1}}
\put(-40.5,-10){\vector(-1,0){9.5}}

\put(20,0.5){\vector(0,1){9.5}}
\put(20,-0.5){\vector(0,-1){9.5}}

\put(40,-0.5){\vector(0,-1){9.5}}
\put(40.5,0){\vector(1,0){9.5}}

\put(0,11){\makebox(0,0)[b]{\tiny $v_0$}}
\put(-49.5,1){\makebox(0,0)[b]{\tiny $\beta$}}
\put(21,9){\makebox(0,0)[l]{\tiny $\alpha$}}

\put(20,11){\makebox(0,0)[b]{\tiny $(2)$}}
\put(20,-11){\makebox(0,0)[t]{\tiny $(1)$}}

\put(51,0){\makebox(0,0)[l]{\tiny $(1)$}}
\put(40,-11){\makebox(0,0)[t]{\tiny $(2)$}}

\put(-51,0){\makebox(0,0)[r]{\tiny $(3)$}}
\put(-51,-10){\makebox(0,0)[r]{\tiny $(0)$}}

\put(-38,-10.5){\makebox(0,0)[tl]{\tiny $7$}}
\put(-22,-10.5){\makebox(0,0)[tr]{\tiny $-1$}}
\put(-39.5,2){\makebox(0,0)[bl]{\tiny $2$}}
\put(-2,.5){\makebox(0,0)[br]{\tiny $5$}}
\put(.5,2){\makebox(0,0)[bl]{\tiny $2$}}
\put(22,.5){\makebox(0,0)[bl]{\tiny $3$}}
\put(38,.5){\makebox(0,0)[br]{\tiny $-1$}}
\put(42,.5){\makebox(0,0)[bl]{\tiny $4$}}
\end{picture}}
$$
where $v_0$ is the pseudo-root of $\Teul$ and where the edge decorations that are not indicated are equal to $1$.
Then $\Teul_\alpha$,  $\Teul_{ \{\alpha,\beta\} }$ and  $\Teul_{ \Aeul \setminus \Aeul_0 }$ are:
\begin{gather*}
\Teul_\alpha: \  
\raisebox{-8mm}{\scalebox{.7}{\begin{picture}(56,16)(-2,-13)
\multiput(0,0)(20,0){3}{\circle{1}}
\multiput(0.5,0)(20,0){2}{\line(1,0){19}}
\put(40.5,0){\vector(1,0){9.5}}
\put(40, -.5){\vector(0,-1){9.5}}
\put(20,-0.5){\vector(0,-1){9.5}}
\put(40,-0.5){\vector(0,-1){9.5}}
\put(0,1){\makebox(0,0)[b]{\tiny $v_0$}}
\put(49,1){\makebox(0,0)[b]{\tiny $\alpha$}}
\put(20,-11){\makebox(0,0)[t]{\tiny $(0)$}}
\put(40,-11){\makebox(0,0)[t]{\tiny $(0)$}}
\put(20.5,-2){\makebox(0,0)[tl]{\tiny $5$}}
\put(18,.5){\makebox(0,0)[br]{\tiny $2$}}
\put(40.5,-2){\makebox(0,0)[tl]{\tiny $3$}}
\put(51,0){\makebox(0,0)[l]{\tiny $(2)$}}
\end{picture}}}\ \quad
\Teul_{ \{\alpha,\beta\} }: \ 
\raisebox{-8mm}{\scalebox{.7}{\begin{picture}(78,28)(-55,-14)
\multiput(-40,0)(20,0){4}{\circle{1}}
\put(0,10){\circle{1}}
\multiput(-39.5,0)(20,0){3}{\line(1,0){19}}
\put(0,.5){\line(0,1){9}}
\put(-40.5,0){\vector(-1,0){9.5}}
\put(20,0.5){\vector(0,1){9.5}}
\put(20,-0.5){\vector(0,-1){9.5}}
\put(-40,-0.5){\vector(0,-1){9.5}}
\put(0,11){\makebox(0,0)[b]{\tiny $v_0$}}
\put(-49.5,1){\makebox(0,0)[b]{\tiny $\beta$}}
\put(21,9){\makebox(0,0)[l]{\tiny $\alpha$}}
\put(20,11){\makebox(0,0)[b]{\tiny $(2)$}}
\put(20,-11){\makebox(0,0)[t]{\tiny $(0)$}}
\put(-51,0){\makebox(0,0)[r]{\tiny $(3)$}}
\put(-2,.5){\makebox(0,0)[br]{\tiny $5$}}
\put(.5,2){\makebox(0,0)[bl]{\tiny $2$}}
\put(20.5,-2){\makebox(0,0)[tl]{\tiny $3$}}
\put(-40,-11){\makebox(0,0)[t]{\tiny $(0)$}}
\put(-39.5,-2){\makebox(0,0)[tl]{\tiny $2$}}
\end{picture}}} \\
\Teul_{ \Aeul \setminus \Aeul_0 }: \ \raisebox{-8mm}{\scalebox{.7}{\begin{picture}(110,26)(-55,-13)
\multiput(-40,0)(20,0){5}{\circle{1}}
\put(0,10){\circle{1}}
\multiput(-39.5,0)(20,0){4}{\line(1,0){19}}
\put(0,.5){\line(0,1){9}}
\put(-40.5,0){\vector(-1,0){9.5}}
\put(-40,-0.5){\vector(0,-1){9.5}}
\put(20,0.5){\vector(0,1){9.5}}
\put(20,-0.5){\vector(0,-1){9.5}}
\put(40,-0.5){\vector(0,-1){9.5}}
\put(40.5,0){\vector(1,0){9.5}}
\put(0,11){\makebox(0,0)[b]{\tiny $v_0$}}
\put(-49.5,1){\makebox(0,0)[b]{\tiny $\beta$}}
\put(21,9){\makebox(0,0)[l]{\tiny $\alpha$}}
\put(20,11){\makebox(0,0)[b]{\tiny $(2)$}}
\put(20,-11){\makebox(0,0)[t]{\tiny $(1)$}}
\put(51,0){\makebox(0,0)[l]{\tiny $(1)$}}
\put(40,-11){\makebox(0,0)[t]{\tiny $(2)$}}
\put(-51,0){\makebox(0,0)[r]{\tiny $(3)$}}
\put(-39.5,-2){\makebox(0,0)[tl]{\tiny $2$}}
\put(-2,.5){\makebox(0,0)[br]{\tiny $5$}}
\put(.5,2){\makebox(0,0)[bl]{\tiny $2$}}
\put(22,.5){\makebox(0,0)[bl]{\tiny $3$}}
\put(38,.5){\makebox(0,0)[br]{\tiny $-1$}}
\put(42,.5){\makebox(0,0)[bl]{\tiny $4$}}
\put(-40,-11){\makebox(0,0)[t]{\tiny $(0)$}}
\end{picture}}}
\end{gather*}
\end{example}

\begin{remark} \label {09o239r8d5beg56vq28gxcv9w}
Let $\Teul \in \DTpr$. If $\alpha \in X \subseteq \Aeul \setminus \Aeul_0$ then $(\Teul_X)_\alpha = \Teul_\alpha$.
\end{remark}

\begin{lemma}  \label {c56gmntsbnebeg5vrbqivuwy54678ik}
Let $\Teul \in \DTpr$, let $X$ be a nonempty subset of $\Aeul \setminus \Aeul_0$ and let $w \in \Veul \cup \Aeul$ be such that
\begin{itemize}

\item $\delta_w = 1$, $w \neq v_0$ and $w \notin X$,

\item if $v_w$ denotes the unique element of $\Veul \cup \Aeul$ adjacent to $w$ then\\
$q( \{w,v_w\} , v_w ) = 1$ or $\delta_{v_w}=2$.

\end{itemize}
Let $\Teul^-$ be the tree obtained from $\Teul$ by removing $w$ and the edge $\{w,v_w\}$ (without removing $v_w$).
Then $X$ is a subset of $\Aeul(\Teul^-) \setminus \Aeul_0(\Teul^-)$ and $\Teul_X = (\Teul^-)_X$.
\end{lemma}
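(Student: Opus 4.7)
The plan is to compare the construction of $\Teul_X$ inside $\Teul$ with that of $(\Teul^-)_X$ inside $\Teul^-$, as given by Definition \ref{092198c45w6evXgb4r8rhfShbg48}, and to show that removing $w$ and the edge $e_w=\{w,v_w\}$ affects none of the data that enter into the construction. Because $\delta_w=1$, the element $w$ can appear on a path $\gamma_{v_0,\alpha}$ (with $\alpha \in X$) only as an endpoint; since $w\neq v_0$ and $w\notin X$, this never happens, so $w$ is neither a vertex nor an arrow of the subtree $\Teul_X'$, and consequently $e_w$ is not an edge of $\Teul_X'$. The first assertion of the lemma, $X\subseteq \Aeul(\Teul^-)\setminus \Aeul_0(\Teul^-)$, is then immediate: removing $w$ alters neither the arrows of $X$ nor their decorations. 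Moreover, the same argument applied inside $\Teul^-$ shows that $\Teul_X'$ and $(\Teul^-)_X'$ coincide as decorated subtrees.

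It remains to check that the auxiliary integers $b_v$ attached to each vertex $v$ of $\Teul_X'$ are the same whether computed in $\Teul$ or in $\Teul^-$. Writing $b_v$ (resp.\ $b_v^-$) for the quantity computed in $\Teul$ (resp.\ in $\Teul^-$), we have $b_v=b_v^-$ whenever $v\neq v_w$, since the only edge of $\Teul$ missing from $\Teul^-$ is $e_w$, and $e_w$ is incident to $v_w$ alone among the vertices of $\Teul_X'$. If $v_w$ is not a vertex of $\Teul_X'$ there is nothing more to verify, so assume $v_w$ is a vertex of $\Teul_X'$; in particular $v_w\in \Veul$. Then $b_{v_w}=q(e_w,v_w)\cdot b_{v_w}^-$, so the task reduces to proving $q(e_w,v_w)=1$.

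By hypothesis either $q(e_w,v_w)=1$ directly, or $\delta_{v_w}=2$. In the latter case I claim $v_w=v_0$, which gives $q(e_w,v_w)=1$ by condition (i) of Definition \ref{pod293805h4s7b2w9310jdcjd}. Indeed, because $v_w$ is a vertex of $\Teul_X'$, some path $\gamma_{v_0,\alpha}$ with $\alpha\in X$ passes through $v_w$; were $v_w$ an interior vertex of that path, then both edges incident to $v_w$ (of which there are only two, by $\delta_{v_w}=2$) would lie on the path, so in particular $e_w$ would, forcing $w$ onto the path; since $\delta_w=1$ this would make $w$ an endpoint of the path, contradicting $w\neq v_0$ and $w\notin X$. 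Hence $v_w$ is an endpoint of the path, and being a vertex it must equal $v_0$. This last step -- that $\delta_{v_w}=2$ together with $v_w\in \Teul_X'$ forces $v_w=v_0$ -- is the only point requiring any thought; the rest is bookkeeping.
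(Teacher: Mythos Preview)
Your proof is correct and follows essentially the same approach as the paper's. The only cosmetic difference is that in the case $\delta_{v_w}=2$ with $v_w$ a vertex of $\Teul_X'$, the paper assumes $q(e_w,v_w)\neq 1$ and derives a contradiction (using the pseudo-root condition to get $v_w\neq v_0$, then showing $v_w$ lies on no $\gamma_{v_0,\alpha}$), whereas you argue directly that $v_w=v_0$ and then invoke condition (i) of Definition~\ref{pod293805h4s7b2w9310jdcjd} to get $q(e_w,v_w)=1$; these are logically equivalent rearrangements of the same idea.
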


\begin{proof}
Since $\delta_w=1$, $w \neq v_0$ and $w \notin X$, it follows that
\begin{equation}  \label {ermi8ae4h8B23cx5wfghh}
\text{for each $\alpha \in X$, $w$ is not in $\gamma_{v_0,\alpha}$;}
\end{equation}
so the removal of $w$ does not affect the formation of $\Teul_X'$, i.e., we have  $\Teul_X' = (\Teul^-)_X'$.
Let $\Seul$ denote the tree $\Teul_X' = (\Teul^-)_X'$ and let $\Veul(\Seul)$ be the set of vertices of $\Seul$.
For each $u \in \Veul(\Seul)$, let $b_u$ (resp.\ $b_u^-$) be the product of the numbers $q(e,u)$ (resp.\ $q^-(e,u)$)
where $e$ runs in the set of all edges of $\Teul$ (resp.\ $\Teul^-$) that are incident to $u$ and are not in $\Seul$,
and where $q(e,u)$ (resp.\ $q^-(e,u)$) is computed in $\Teul$ (resp.\ $\Teul^-$).
We claim that $b_u = b_u^-$ for all $u \in \Veul(\Seul)$. Indeed, let $u \in \Veul(\Seul)$.
We have $w \notin \Veul(\Seul)$ by \eqref{ermi8ae4h8B23cx5wfghh}, so $w \neq u$.
If $w$ is not adjacent to $u$ in $\Teul$, then $b_u = b_u^-$ is obvious.
Assume that $w$ is adjacent to $u$ in $\Teul$.
Then $v_w = u$, so $q( \{w,u\} , u ) = 1$ or $\delta_u=2$.
Also, we have $b_u = q( \{w,u\} , u ) b_u^-$, so it suffices to show that $q( \{w,u\} , u ) = 1$.
If $q( \{w,u\} , u ) \neq 1$ then $\delta_{u} = 2$, and  $u \neq v_0$ because some edge decoration near $u$ is not $1$;
so for each $\alpha \in X$, $u$ is not in $\gamma_{v_0,\alpha}$.
This contradicts $u \in \Veul(\Seul)$, so we have shown that $q( \{w,u\} , u ) = 1$. Thus, $b_u = b_u^-$ for all $u \in \Veul(\Seul)$.
This together with $\Teul_X' = (\Teul^-)_X'$ implies that $\Teul_X = (\Teul^-)_X$.
\end{proof}

Note that $\Teul_{ \Aeul \setminus \Aeul_0 }$ is not necessarily equal to $\Teul$ (see Ex.\ \ref{u98mwnbbpwe7gcc342}).
So the following is not trivial:

\begin{lemma} \label {923hCbf9182uBe902hAwga0w91a}
If $\Teul \in \DTpr$ is such that $\Aeul \setminus \Aeul_0 \neq \emptyset$   then $M(\Teul) = M(\Teul_{ \Aeul \setminus \Aeul_0 })$.
In particular, if  $|\Aeul \setminus \Aeul_0| = 1$ and $\alpha$ denotes the unique element of $\Aeul \setminus \Aeul_0$ then $M(\Teul) = M(\Teul_\alpha)$.
\end{lemma}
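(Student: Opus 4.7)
The plan is to compare $M(\Teul)$ and $M(\Teul_X)$ (where $X = \Aeul \setminus \Aeul_0$) by partitioning $\Veul \cup \Aeul_0$ into the vertex set of $\Teul_X'$, call it $\Veul'$, and the ``pruned'' subtrees that hang off each $v \in \Veul'$, then showing that, once $N_v$ is seen to agree in the two trees, the contribution of the pruned subtrees to $M(\Teul)$ is exactly what is bundled into the arrow $\alpha_v$ in $\Teul_X$ together with the change of valency at $v$. The pseudo-root hypothesis is indispensable: without it a tree with two coprime-decorated dead ends at one vertex already gives $M(\Teul) \neq M(\Teul_X)$, so the argument must crucially invoke Def.\ \ref{pod293805h4s7b2w9310jdcjd}(i)--(ii).

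First I would verify that for every $v \in \Veul'$ the value $N_v$ is the same in $\Teul$ and in $\Teul_X$. This is a direct inspection of $x_{v,\beta}$ for $\beta \in X$: the path $\gamma_{v,\beta}$ lies entirely in $\Teul_X'$, and the non-path factor at $v$ equals $\prod_{\epsilon \not\in \gamma_{v,\beta}} q(\epsilon, v)$, which contains the same data in both trees (the product $b_v$ of all pruned-edge decorations in $\Teul$, and the same $b_v$ coming from the $\alpha_v$-edge in $\Teul_X$ when $b_v \neq 1$, or neither otherwise).

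The main work is to compute the total contribution $C_T := -\sum_{w \in T \cap (\Veul \cup \Aeul_0)} N_w(\delta_w - 2)$ of a pruned subtree $T$ attached to $v \in \Veul'$ via an edge $e = \{v, u\}$. A direct comparison of $x_{w,\beta}$ with $x_{v,\beta}$ for $\beta \in X$ yields a factorization $x_{w,\beta} = (H_w/q(e,v)) \cdot x_{v,\beta}$, where $H_w$ depends only on the position of $w$ in $T$ and the decorations inside $T$. Summing over $\beta$ gives $q(e,v) N_w = H_w N_v$ and hence $q(e,v)\, C_T = N_v \cdot K_T$ where $K_T := -\sum_w H_w(\delta_w - 2)$. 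The crux is the purely combinatorial identity $K_T = 1$, proved by induction on $|T|$. The base case $T = \{u\}$ gives $H_u = 1$, $\delta_u = 1$, so $K_T = 1$. For the inductive step, let $u$'s child edges be $e'_1,\dots,e'_m$ with $c_j = q(e'_j,u)$; the pseudo-root condition at $u$ (valid since $u \neq v_0$) forces at most one $c_j \neq 1$. The relation $H_w = \prod_{k \neq j} c_k \cdot \tilde H_w$ for $w$ in the $j$-th child-subtree $T_j$ (where $\tilde H_w$ is the local quantity inside $T_j$), combined with the inductive hypothesis $K_{T_j} = 1$, collapses $K_T$ to the expression $-\bigl(\prod_j c_j\bigr)(m-1) + \sum_j \prod_{k \neq j} c_k$, which evaluates to $1$ in both sub-cases (all $c_j = 1$; or one $c_{j_0} = c \neq 1$ with the remaining $c_j = 1$).

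Given the identity $q(e,v)\, C_T = N_v$ for every pruned subtree, I then apply the pseudo-root condition once more at each $v \in \Veul'$ to identify $b_v$ as either $1$ or the single nontrivial $q(e_i, v)$; a short case-by-case calculation --- whose edge cases ($q(e, v) = 0$ or $N_v = 0$) are absorbed by reading the key identity in the integer form $q(e,v)\, C_T = N_v$ rather than dividing --- shows that for each $v$ the combined contribution in $\Teul$ of $v$ and its pruned subtrees equals the combined contribution in $\Teul_X$ of $v$ and the possible $\alpha_v$, whence $M(\Teul) = M(\Teul_X)$. The main obstacle is the inductive step of the combinatorial identity $K_T = 1$: ensuring the pseudo-root restriction at the internal pruned vertex $u \notin \Veul'$ forces $\sum_j \prod_{k \neq j} c_k$ to cancel against $\bigl(\prod_j c_j\bigr)(m-1) + 1$ uniformly for all integer values of the $c_j$ (in particular allowing $c_j = 0$).
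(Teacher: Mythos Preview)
Your strategy is sound and genuinely different from the paper's. The paper argues by induction on the number of edges: it first disposes of the case $C^*(\Teul)=\emptyset$ (where each pruned piece is forced to be a single leaf and $\Teul_X$ is obtained from $\Teul$ by relabelling certain leaf vertices as arrows in $\Aeul_0$, which visibly preserves $M$), then reduces the general case by deleting one leaf $w\in C^*(\Teul)$ at a time, invoking Lemma~\ref{0v82u395u4i83hrf993md4tj09} and Lemma~\ref{c56gmntsbnebeg5vrbqivuwy54678ik} to get $M(\Teul)=M(\Teul^-)$ and $\Teul_X=(\Teul^-)_X$. Your identity $K_T=1$ (whose inductive proof is correct, including when some $c_j=0$) replaces this entire reduction by a single direct computation.

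There is, however, a genuine gap in your final comparison when $b_v=0$. If the distinguished pruned edge $e_{i_0}$ at $v$ has $q(e_{i_0},v)=b_v=0$, the integer identity $q(e_{i_0},v)\,C_{T_{i_0}}=N_v$ collapses to $0=0$ and gives no information about $C_{T_{i_0}}$; the companion relation $b_v\,N_{\alpha_v}^{(X)}=N_v$ from Remark~\ref{es5s6xjc17lcy4l} likewise collapses. Since the difference of the two contributions at $v$ reduces precisely to $C_{T_{i_0}}-N_{\alpha_v}^{(X)}$, the integer form does not ``absorb'' this case as you claim. The repair fits your framework: for $w\in T_{i_0}$, compare $x_{w,\beta}$ directly to $x^{(X)}_{\alpha_v,\beta}$ rather than to $x_{v,\beta}$. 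The products at $v$ agree (the excluded edges are $e_{i_0}$ and $e_\beta$ on the $\Teul$-side and $\{v,\alpha_v\}$ and $e_\beta$ on the $\Teul_X$-side, and the remaining pruned decorations at $v$ are all $1$ by the pseudo-root condition), so $N_w = H_w\,N_{\alpha_v}^{(X)}$ without any division, whence $C_{T_{i_0}}=K_{T_{i_0}}\,N_{\alpha_v}^{(X)}=N_{\alpha_v}^{(X)}$ uniformly for all $b_v\neq1$.
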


\begin{proof}
\renewcommand{\DTpru}{\text{\rm\bf DT}'_{\text{\rm pr}}}
Let $\DTpru$ be the set of $\Teul \in \DTpr$ such that $\Aeul \setminus \Aeul_0 \neq \emptyset$.
Let $\bbX$ be the set of $\Teul \in \DTpru$ such that  $M(\Teul) = M(\Teul_{\Aeul \setminus \Aeul_0})$.
We have to show that $\bbX = \DTpru$.

Given $\Teul \in \DTpru$, define $C(\Teul) = \setspec{ w \in \Veul \cup \Aeul_0 }{ \text{$w \neq v_0$ and $\delta_w=1$} }$.
Given $w \in C(\Teul)$, let $v_w$ be the unique element of $\Veul \cup \Aeul$ adjacent to $w$ and let $r_w = q( \{w,v_w\}, v_w )$;
note that $\delta_{v_w} \ge2$ (because $w \neq v_0$ and $\Aeul \setminus \Aeul_0 \neq \emptyset$), so in particular $v_w \in \Veul$.
Let $C^*(\Teul) = \setspec{ w \in C(\Teul) }{ r_w = 1 \text{ or } \delta_{v_w}=2 }$.
We first prove that
\begin{equation} \label {Didedjwi6ytg33a4rytdtfeU9kBBdhf8}
\text{$\bbX$ contains all $\Teul \in \DTpru$ such that $C^*(\Teul) = \emptyset$.}
\end{equation}
Let $\Teul \in \DTpru$ be such that $C^*(\Teul) = \emptyset$.
Let $X = \Aeul \setminus \Aeul_0$ and consider the trees $\Teul_X'$ and $\Teul_X$ as in Def.\ \ref{092198c45w6evXgb4r8rhfShbg48}.
Given a vertex $u$ of $\Teul_X'$, 
consider the set $\Gamma_u$ of all paths $(x_0, \dots, x_m)$ in $\Teul$ such that $x_0 = u$, $m\ge1$, and $x_1$ is not in $\Teul_{X}'$ 
(i.e., $x_1$ is an arrow or a vertex of $\Teul$ which is not in $\Teul_X'$).
Note that if  $(x_0, \dots, x_m) \in \Gamma_u$ then $x_1, x_2, \dots, x_m$ are not in $\Teul_X'$.
We claim:
\begin{equation}  \label {fbrgxsiy46sy57lthhql5ygvr}
\begin{minipage}[t]{.8\textwidth}
If $\Gamma_u \neq \emptyset$ then $\Gamma_u = \{ (u,x_u) \}$ for some $x_u \in C(\Teul)$,
and moreover $q( \{u,x_u\} , u ) \neq 1$.
\end{minipage}
\end{equation}
To see this, assume that $\Gamma_u \neq \emptyset$ and choose $\gamma = (x_0, \dots, x_m) \in \Gamma_u$ which maximizes $m$.
Then $x_m \in C(\Teul)$. Since $x_m \notin C^*(\Teul)$, we have $q( \{x_m, x_{m-1}\}, x_{m-1})\neq1$ and $\delta_{x_{m-1}} > 2$.
Let us prove that $m=1$.
Indeed, suppose that $m>1$. Then $x_{m-1}$ is not in $\Teul_X'$.
Since $v_0$ is in $\Teul_X'$, it follows that $x_{m-1} \neq v_0$ and that the edge $\{x_{m-2},x_{m-1}\}$ is in $\gamma_{v_0,x_{m-1}}$.
Since $v_0$ is the pseudo-root of $\Teul$,
\begin{equation}  \label {B8765rfghjxuyt532A56uehetdry}
\begin{minipage}[t]{.8\textwidth}
at most one edge $e$ of $\Teul$ incident to $x_{m-1}$ and distinct from $\{x_{m-2},x_{m-1}\}$ satisfies $q(e,x_{m-1}) \neq 1$.
\end{minipage}
\end{equation}
Since  $\delta_{x_{m-1}} > 2$, we may choose $x_m' \in \Veul \cup \Aeul_0$ such that $x_m'$ is adjacent to $x_{m-1}$ and is not in $\gamma$.
Then $(x_0,\dots,x_{m-1},x_m') \in \Gamma_u$, maximality of $m$ implies that $x_m' \in C(\Teul)$,
and $x_m' \notin C^*(\Teul)$ implies that $q( \{x_m', x_{m-1}\}, x_{m-1})\neq1$;
this contradicts \eqref{B8765rfghjxuyt532A56uehetdry}, so we have shown that $m=1$.
Note that each element of $\Gamma_u$ is a path $(u,x)$ for some $x \in C(\Teul) \setminus C^*(\Teul)$,
and this implies that $q( \{u,x\} , u ) \neq 1$ and hence that $u \neq v_0$.
Also note that the edge $\{u,x\}$ is not in $\gamma_{v_0,u}$, 
so the fact that  $v_0$ is the pseudo-root of $\Teul$ implies that $| \Gamma_u | = 1$.
This completes the proof of \eqref{fbrgxsiy46sy57lthhql5ygvr}.

Let $U$ be the set of vertices $u$ of $\Teul_X'$ such that $\Gamma_u \neq \emptyset$.
For each $u \in U$, an element $x_u$ of $\Veul \cup \Aeul_0$ is defined by \eqref{fbrgxsiy46sy57lthhql5ygvr}.
This $x_u$ is adjacent to $u$ and satisfies  $\delta_{x_u}=1$, $x_u \neq v_0$ and $q( \{u,x_u\} , u ) \neq 1$.
It follows that $\Teul_X$ is obtained from $\Teul$ by performing the following operation:
\begin{quote}
For each $u \in U$ such that $x_u \in \Veul$, remove the vertex $x_u$ and the edge $\{u,x_u\}$,
and add an arrow $\alpha_u$ decorated by $(0)$ and an edge $\{u,\alpha_u\}$
whose decorations near $u$ and $\alpha_u$ are $q( \{u,x_u\},u )$ and $1$ respectively.
\end{quote}
It is clear that the above operation does not change the value of $M(\Teul)$, i.e., $M(\Teul) = M(\Teul_X) = M(\Teul_{ \Aeul \setminus \Aeul_0 })$.
So $\Teul \in \bbX$, which proves \eqref{Didedjwi6ytg33a4rytdtfeU9kBBdhf8}.

Given $\Teul\in\DTpru$, let $\|\Teul\|$ denote the number of edges of $\Teul$ and note that $\|\Teul\| \ge 1$.
If $\|\Teul\| = 1$ then $C(\Teul) = \emptyset$, so $C^*(\Teul) = \emptyset$ and hence $\Teul \in \bbX$ by \eqref{Didedjwi6ytg33a4rytdtfeU9kBBdhf8}.
Let $n>1$ and suppose that $\Teul \in \bbX$ for all $\Teul \in \DTpru$ satisfying $\| \Teul \| < n$.
Consider  $\Teul \in \DTpru$ such that $\| \Teul \| = n$ and let us prove that $\Teul \in \bbX$.
In view of \eqref{Didedjwi6ytg33a4rytdtfeU9kBBdhf8}, we may assume that  $C^*(\Teul) \neq \emptyset$.
Choose $w \in C^*(\Teul)$ and note that $r_w = 1$ or $\delta_{v_w}=2$.
Let $\Teul^-$ be the tree obtained from $\Teul$ by deleting $w$ and the edge $\{w,v_w\}$ (we do not delete $v_w$).
Observe that the set $\Aeul \setminus \Aeul_0$ is the same for the two trees $\Teul$ and $\Teul^-$, and define $X = \Aeul \setminus \Aeul_0$.
Then $\Teul^- \in \DTpru$ and $\|\Teul^-\| < n$, so $\Teul^- \in \bbX$, i.e., $M(\Teul^-) = M((\Teul^-)_X)$.
To complete the proof, it suffices to verify that 
\begin{equation}  \label {782heQdnc837XsT46etcvn}
M(\Teul) = M(\Teul^-) \quad \text{and} \quad \Teul_X = (\Teul^-)_X .
\end{equation}
Indeed, if this is true then $M(\Teul) = M(\Teul^-) = M( (\Teul^-)_X ) = M( \Teul_X )$, so $\Teul \in \bbX$ and we are done.

Recall that $r_w = 1$ or $\delta_{v_w}=2$.
If $r_w=1$ then  $M(\Teul) = M(\Teul^-)$ follows from Lemma  \ref{0v82u395u4i83hrf993md4tj09}.
If $\delta_{v_w}=2$ then let $e$ be the unique edge of $\Teul$ incident to $v_w$ and distinct from $\{v_w,w\}$.
Let $N_{v_w}^-$ and $\delta_{v_w}^-$ denote the numbers $N_{v_w}$ and $\delta_{v_w}$ computed in $\Teul^-$.
Then $N_w = p(w, \{w,v_w\}) = p(v_w,e) = N_{v_w}^-$, so
\begin{align*}
-N_{v_w}(\delta_{v_w}-2) - N_w(\delta_{w}-2) &= -N_{v_w}(2-2) - N_w(1-2) = - N^-_{v_w}(1-2) \\
&= - N^-_{v_w}(\delta^-_{v_w}-2)
\end{align*}
and hence  $M(\Teul) = M(\Teul^-)$.
Lemma \ref{c56gmntsbnebeg5vrbqivuwy54678ik} implies that $\Teul_X = (\Teul^-)_X$, so \eqref{782heQdnc837XsT46etcvn} is proved.
As we already explained, this proves the Lemma.
\end{proof}


\begin{notation}
Let $\Teul = (\Veul, \Aeul, \Eeul, f, q) \in \DT$.
\begin{enumerata}

\item If $X, Y$ are subsets of $\Aeul \setminus \Aeul_0$, define
$\displaystyle I(X,Y)
\ = \sum\undersum{ \begin{smallmatrix} (\alpha,\beta) \in X \times Y \\ \alpha \neq \beta \end{smallmatrix} }  Q^*(\gamma_{\alpha,\beta}) f(\alpha) f(\beta)$.

\item  If $\alpha \in \Aeul \setminus \Aeul_0$, define
$$
I(\alpha)
\ = \ \ \sum\undersum{ \beta \in (\Aeul \setminus \Aeul_0) \setminus \{\alpha\} } Q^*(\gamma_{\alpha,\beta}) f(\alpha) f(\beta)
\ = \  I\big( \{\alpha\}, (\Aeul\setminus\Aeul_0) \setminus \{\alpha\} \big) .
$$

\end{enumerata}
\end{notation}

\begin{remark} \label {fgfvgertjuIPPWeCdEew8hju834hnf893krfcnfFVH6t8}
Let $\Teul \in \DT$.
\begin{enumerate}

\item If $X,Y \subseteq \Aeul \setminus \Aeul_0$ then $I(X,Y) = I(Y,X)$.
\item If $X,Y \subseteq \Aeul \setminus \Aeul_0$ and $X \cap Y = \emptyset$ then
\begin{align*}
I(X \cup Y, X \cup Y) &= I(X,X) + I(X,Y) + I(Y,X) + I(Y,Y) \\
&= I(X,X) + 2I(X,Y) + I(Y,Y).
\end{align*}

\item If $\Pscr$ is a partition of $\Aeul \setminus \Aeul_0$ then
$$
I( \Aeul \setminus \Aeul_0 , \Aeul \setminus \Aeul_0 )
= \sum_{X \in \Pscr} I(X,X)
\ +  \sum\undersum{ \begin{smallmatrix} (X,Y) \in \Pscr^2 \\ X \neq Y \end{smallmatrix} } I(X,Y) \, .
$$
\end{enumerate}
\end{remark}

\begin{notation}
If $\Teul \in \DT$ and $\alpha \in \Aeul \setminus \Aeul_0$, define
$$
M_\alpha(\Teul) = - \sum\undersum{v \in \Veul \cup \Aeul_0} x_{v,\alpha} ( \delta_v - 2 ).
$$
Clearly, $M(\Teul) = \sum_{ \alpha \in \Aeul \setminus \Aeul_0 } M_\alpha(\Teul)$.
\end{notation}

\begin{lemma}  \label {0d9f09gf909s990q9e0j3m3mn2mnm2moeiu83cdwx4}
Let $\Teul = (\Veul,\Aeul,\Eeul,f,q,v_0) \in \DTpr$ and suppose that $f(\alpha) = 1$ for all  $\alpha \in \Aeul \setminus \Aeul_0$.
Then
\begin{equation}  \label {9238uhe6c3newclaim92388e7gd}
M(\Teul_\alpha) - M_\alpha(\Teul) = I(\alpha), \quad \text{for every $\alpha \in \Aeul \setminus \Aeul_0$.}
\end{equation}
\end{lemma}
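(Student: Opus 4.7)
My plan is to prove Lemma~\ref{0d9f09gf909s990q9e0j3m3mn2mnm2moeiu83cdwx4} by induction on the number $n = |\Aeul \setminus \Aeul_0|$ of non-dead-end arrows of $\Teul$. The base case $n=1$ is immediate: when $\Aeul \setminus \Aeul_0 = \{\alpha\}$, the sum defining $I(\alpha)$ is empty and $M_\alpha(\Teul) = M(\Teul)$, so \eqref{9238uhe6c3newclaim92388e7gd} follows from Lemma~\ref{923hCbf9182uBe902hAwga0w91a}, which gives $M(\Teul) = M(\Teul_\alpha)$.

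For the inductive step ($n \geq 2$), I would choose any $\beta \in (\Aeul \setminus \Aeul_0) \setminus \{\alpha\}$, let $w$ denote the vertex of $\Teul$ adjacent to $\beta$, set $s = q(\{w,\beta\},w)$, and form the tree $\Teul^-$ obtained from $\Teul$ by deleting $\beta$ together with the edge $\{w,\beta\}$. Since the pseudo-root conditions of Def.~\ref{pod293805h4s7b2w9310jdcjd} are inherited under the removal of an edge, $\Teul^-$ lies in $\DTpr$ with the same pseudo-root $v_0$ and has $n-1$ non-dead-end arrows, so the induction hypothesis gives $M(\Teul^-_\alpha) - M_\alpha(\Teul^-) = I^-(\alpha)$. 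It then suffices to prove that the three differences $\Delta X := X(\Teul) - X(\Teul^-)$ satisfy $\Delta M(\Teul_\alpha) = \Delta M_\alpha(\Teul) + \Delta I(\alpha)$.

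The core of the computation rests on two observations about the passage from $\Teul$ to $\Teul^-$: the valency $\delta_w$ decreases by one while all others are unchanged, and for every $v$ such that $w$ lies on $\gamma_{v,\alpha}$, the quantity $x_{v,\alpha}$ picks up a factor of $s$ in $\Teul$ that is removed in $\Teul^-$ (and analogously for $Q^*(\gamma_{\alpha,\gamma})$ when $w$ lies on $\gamma_{\alpha,\gamma}$). Combined with the key identity $s \cdot Q^*(\gamma_{\alpha,\beta}) = x_{w,\alpha}$, which I would establish by directly comparing the edge decorations of the two paths $\gamma_{\alpha,\beta}$ and $\gamma_{w,\alpha}$ (they share the same intermediate vertices and differ only by the terminal edge $\{w,\beta\}$ appearing in $\gamma_{\alpha,\beta}$), this yields closed-form expressions for $\Delta M_\alpha(\Teul)$ and $\Delta I(\alpha)$ in terms of $x_{w,\alpha}/s$, a weighted sum $T$ of $x_{v,\alpha}(\delta_v - 2)$ over $v \in \Veul \cup \Aeul_0$ with $w \in \gamma_{v,\alpha}$, and an analogous sum $S$ of $Q^*(\gamma_{\alpha,\gamma})$ over arrows $\gamma \in (\Aeul \setminus \Aeul_0) \setminus \{\alpha,\beta\}$ with $w \in \gamma_{\alpha,\gamma}$; the required equality reduces to $\Delta M(\Teul_\alpha) = \tfrac{s-1}{s}(S-T)$.

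The final step splits according to whether $w$ lies on $\gamma_{v_0,\alpha}$. If it does not, then the decorations $b_{w_j}$ of Def.~\ref{092198c45w6evXgb4r8rhfShbg48} are unaffected by the deletion, so $\Teul_\alpha = \Teul^-_\alpha$ and $\Delta M(\Teul_\alpha) = 0$; the remaining assertion is the auxiliary identity $S = T$, which expresses both sides as sums over the subtree beyond $w$ and which I would establish by applying the induction hypothesis to that subtree equipped with a suitable virtual arrow at $w$ playing the role of $\alpha$. If $w = w_j$ lies on $\gamma_{v_0,\alpha}$, then $b_{w_j}$ changes from its original value $b$ to $b/s$, modifying the dead-end structure of $\Teul_\alpha$; the resulting $\Delta M(\Teul_\alpha)$ can be read off from an explicit formula for $M(\Teul_\alpha)$ in terms of cumulative products of the $b_{w_i}$, and it must be matched against an extra contribution to $S$ coming from arrows beyond $w_j$. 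I expect the main obstacle to lie in this last case, where one must carefully track how a single change in $b_{w_j}$ propagates through the cumulative-product formula for $M(\Teul_\alpha)$, and additionally to verify the degenerate case $s=0$, where the factorizations used above must be replaced by direct calculations.
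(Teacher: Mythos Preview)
Your overall inductive framework matches the paper's: induction on $|\Aeul\setminus\Aeul_0|$, delete an arrow $\beta$, compare. The base case via Lemma~\ref{923hCbf9182uBe902hAwga0w91a} is correct. The divergence is in how the inductive step is organized, and this is where your proposal develops a gap.

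The paper does \emph{not} try to handle a general edge decoration $s=q(\{v_\beta,\beta\},v_\beta)$ directly. Instead it proceeds in two stages. First it proves \eqref{9238uhe6c3newclaim92388e7gd} under the extra hypothesis that some $\beta\neq\alpha$ has $s=1$; in that case Lemma~\ref{c56gmntsbnebeg5vrbqivuwy54678ik} gives $\Teul_\alpha=(\Teul')_\alpha$ outright, and the remaining bookkeeping is a two-line computation showing $\Delta M_\alpha(\Teul)=-x_{v_\beta,\alpha}=-I(\{\alpha\},\{\beta\})=-\Delta I(\alpha)$. Second, it reduces the general case to this one by inserting a valency-$2$ vertex $w_\beta$ on the edge $\{v_\beta,\beta\}$: this produces a tree $\Teul'$ with the same $n$ for which the pair $(\Teul',\alpha)$ now satisfies the extra hypothesis, and one checks in three lines that $M((\Teul')_\alpha)=M(\Teul_\alpha)$, $M_\alpha(\Teul')=M_\alpha(\Teul)$ and $I'(\alpha)=I(\alpha)$. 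This two-stage trick completely sidesteps the difficulties you encounter.

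Your direct approach for general $s$ leads you to the auxiliary identity $S=T$ in Case~1 and to an analogous but more involved matching in Case~2. The identity $S=T$ is indeed true (and your reduction to it is correct), but your proposed justification---``apply the induction hypothesis to the subtree beyond $w$ with a virtual arrow''---does not work as stated: the induction hypothesis is the statement \eqref{9238uhe6c3newclaim92388e7gd} itself, not a relation between $S$ and $T$, and it is not clear how to manufacture an auxiliary pseudo-rooted tree with strictly fewer arrows whose instance of \eqref{9238uhe6c3newclaim92388e7gd} is $S=T$. What actually proves $S=T$ is a separate combinatorial identity of the form $p(w,e_j)=1+\sum_{y\in Y(w,e_j)}\phi(w,y)(\delta_y-2)$ for each out-pair $(w,e_j)$ with $j\neq j_0$ (this is essentially Lemma~\ref{873gsDXNHty76fdXCVBNi2378462b7w8jse2}, which the paper proves later and independently); combined with the pseudo-root condition, which forces $q(e_j,w)=1$ for all $j\neq j_0$ once $s\neq 1$, this yields $S=T$. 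So your Case~1 can be salvaged, but not by the mechanism you describe. Your Case~2 you explicitly leave open, and it does require tracking how the change $b_w\mapsto b_w/s$ (which, again by the pseudo-root condition, is actually $s\mapsto 1$ when $s\neq 1$) propagates through $M(\Teul_\alpha)$; this is doable but is exactly the kind of calculation the paper's vertex-insertion trick was designed to avoid.
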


\begin{proof}
Let $\DTprp$ denote the set of $\Teul \in \DTpr$ satisfying $f(\alpha) = 1$ for all  $\alpha \in \Aeul \setminus \Aeul_0$.
We prove the Lemma by induction on $F(\Teul)$ (which is equal to $|\Aeul \setminus \Aeul_0|$ for all $\Teul \in \DTprp$).
We first show that
\begin{equation}  \label {9827cZbzxnBrvvEBsdfgRTh9}
\text{If $\Teul \in \DTprp$ and $F(\Teul) \le 1$ then $\Teul$ satisfies \eqref{9238uhe6c3newclaim92388e7gd}.}
\end{equation}
This is trivial if $F(\Teul) = 0$. If $F(\Teul)=1$ then $\Aeul \setminus \Aeul_0$ has a unique element $\alpha$,
so $M(\Teul) = M(\Teul_\alpha)$ by Lemma \ref{923hCbf9182uBe902hAwga0w91a};
since $I(\alpha)=0$ and $M_\alpha(\Teul) = M(\Teul)$, we see that $\Teul$ satisfies \eqref{9238uhe6c3newclaim92388e7gd}.
So \eqref{9827cZbzxnBrvvEBsdfgRTh9} is true.

Let $n>1$ and assume that
\begin{equation} \label {87afvCaeWvz00mmfQ3zxk6hxdCxvdh7d25x}
\text{if $\Teul \in \DTprp$ and $F(\Teul) < n$ then $\Teul$ satisfies \eqref{9238uhe6c3newclaim92388e7gd}.}
\end{equation}
Consider $\Teul = (\Veul,\Aeul,\Eeul,f,q,v_0) \in \DTprp$ such that $F(\Teul)=n$.
Let $\alpha \in  \Aeul \setminus \Aeul_0$ and let us prove that
\begin{equation} \label {56c5v6b2m239sf8gdfvlkka2gcjg5r65u}
M(\Teul_\alpha) - M_\alpha(\Teul) = I(\alpha) .
\end{equation}
Since $n>1$, we have $(\Aeul \setminus \Aeul_0) \setminus \{\alpha\} \neq \emptyset$.
We shall first prove \eqref{56c5v6b2m239sf8gdfvlkka2gcjg5r65u} under the additional assumption that the pair $(\Teul,\alpha)$ satisfies
\begin{equation} \label {9z8xiejhtgcdusajw4wajw3n4bc56d78kikjhfyds}
\begin{minipage}[t]{.85\textwidth}
there exists $\beta \in (\Aeul \setminus \Aeul_0) \setminus \{\alpha\}$ such that
the decoration of $\{\beta,v_\beta\}$ near $v_\beta$ is $1$,
where $v_\beta$ denotes the unique vertex adjacent to $\beta$.
\end{minipage}
\end{equation}
Assume that $(\Teul,\alpha)$ satisfies \eqref{9z8xiejhtgcdusajw4wajw3n4bc56d78kikjhfyds}
and let $\beta$ and $v_\beta$ be as in \eqref{9z8xiejhtgcdusajw4wajw3n4bc56d78kikjhfyds}.
Let $\Teul'$ be the tree obtained from $\Teul$ by deleting the arrow $\beta$ and the edge $\{\beta,v_\beta\}$ (without deleting $v_\beta$).
Since $\Teul' \in \DTprp$  and $F(\Teul')<n$, \eqref{87afvCaeWvz00mmfQ3zxk6hxdCxvdh7d25x} implies that 
$M((\Teul')_\alpha) - M_\alpha(\Teul') = I'(\alpha)$ (where $I'(\alpha)$ means $I(\alpha)$ computed in $\Teul'$).
Keeping in mind that the decoration of $\{\beta,v_\beta\}$ near $v_\beta$ is $1$,
we obtain $\Teul_\alpha = (\Teul')_\alpha$ (and hence $M(\Teul_\alpha) = M((\Teul')_\alpha)$) from Lemma \ref{c56gmntsbnebeg5vrbqivuwy54678ik},
and we see that $I(\alpha) - I'(\alpha) = I(\{\alpha\},\{\beta\})$ and 
\begin{align*}
M_\alpha(\Teul) - M_\alpha(\Teul')
&= -x_{v_\beta, \alpha}(\delta_{v_\beta}-2) + x'_{v_\beta, \alpha}(\delta'_{v_\beta}-2) \\
&= -x_{v_\beta, \alpha}(\delta_{v_\beta}-2) + x_{v_\beta, \alpha}(\delta_{v_\beta}-3) 
= -x_{v_\beta, \alpha} = - I(\{\alpha\},\{\beta\}) .
\end{align*}
These equalities together with $M((\Teul')_\alpha) - M_\alpha(\Teul') = I'(\alpha)$ imply that \eqref{56c5v6b2m239sf8gdfvlkka2gcjg5r65u}
is true (when $(\Teul,\alpha)$ satisfies \eqref{9z8xiejhtgcdusajw4wajw3n4bc56d78kikjhfyds}).

Next, we show that \eqref{56c5v6b2m239sf8gdfvlkka2gcjg5r65u} is true without assuming that $(\Teul,\alpha)$
satisfies \eqref{9z8xiejhtgcdusajw4wajw3n4bc56d78kikjhfyds}.
Let $\beta$ be an arbitrary element of $(\Aeul \setminus \Aeul_0) \setminus \{\alpha\}$,
let $v_\beta$ denote the unique vertex adjacent to $\beta$
and let $h = q( \{v_\beta,\beta\} , v_\beta )$.
Let $\Teul'$ be the tree obtained from $\Teul$ by replacing
\raisebox{-2mm}{\begin{picture}(23,6)(-3,-3.5)
\put(0,0){\circle{1}}
\put(0.5,0){\vector(1,0){14}}
\put(3,.5){\makebox(0,0)[b]{\tiny $h$}}
\put(0,-1){\makebox(0,0)[t]{\tiny $v_\beta$}}
\put(13.5,-1){\makebox(0,0)[t]{\tiny $\beta$}}
\put(15,0){\makebox(0,0)[l]{\tiny $(1)$}}
\end{picture}}
by
\raisebox{-2mm}{\begin{picture}(37,6)(-3,-3.5)
\put(0,0){\circle{1}}
\put(.5,0){\line(1,0){14}}
\put(15,0){\circle{1}}
\put(15.5,0){\vector(1,0){14}}
\put(3,.5){\makebox(0,0)[b]{\tiny $h$}}
\put(12,.5){\makebox(0,0)[b]{\tiny $1$}}
\put(18,.5){\makebox(0,0)[b]{\tiny $1$}}
\put(0,-1){\makebox(0,0)[t]{\tiny $v_\beta$}}
\put(15,-1){\makebox(0,0)[t]{\tiny $w_\beta$}}
\put(28.5,-1){\makebox(0,0)[t]{\tiny $\beta$}}
\put(30,0){\makebox(0,0)[l]{\tiny $(1)$}}
\end{picture}} (so we are adding a vertex $w_\beta$ in the middle of the edge $\{v_\beta,\beta\}$).
Observe that $\Teul' \in \DTprp$, $F(\Teul') = F(\Teul) = n$,
and $(\Teul',\alpha)$ satisfies \eqref{9z8xiejhtgcdusajw4wajw3n4bc56d78kikjhfyds};
so, by the preceding part of the proof, we have
\begin{equation}  \label {90s9suidg2s3drcflkErrkkiy99bxvqwf9}
M\big( (\Teul')_\alpha \big) - M_\alpha(\Teul') = I'(\alpha)
\end{equation}
where $I'(\alpha)$ means $I(\alpha)$ computed in $\Teul'$.
So, to prove \eqref{56c5v6b2m239sf8gdfvlkka2gcjg5r65u}, it suffices to show that 
\begin{equation}  \label {w9e878126trwtsg9183h3dv652fn39}
M\big( (\Teul')_\alpha \big) = M( \Teul_\alpha ),
\quad M_\alpha(\Teul') = M_\alpha(\Teul)
\quad \text{and} \quad I'(\alpha) = I(\alpha).
\end{equation}
We have $(\Teul')_\alpha = \Teul_\alpha$, so $M\big( (\Teul')_\alpha \big) = M(\Teul_\alpha)$.
It is clear that $I'(\alpha) = I(\alpha)$, so it suffices to verify that $M_\alpha(\Teul') = M_\alpha(\Teul)$. 
For each $s \in \Veul \cup \Aeul_0$ (in $\Teul$) we have $x_{s,\alpha} = x'_{s,\alpha}$ and $\delta_s = \delta'_s$, so 
\begin{align*}
M_\alpha(\Teul')
&= \textstyle -x'_{w_\beta,\alpha} (\delta'_{w_\beta} - 2) -\sum_{s \in \Veul \cup \Aeul_0}  x'_{s,\alpha} (\delta'_s - 2) \\
&= \textstyle -x'_{w_\beta,\alpha} (2 - 2) -\sum_{s \in \Veul \cup \Aeul_0}  x_{s,\alpha} (\delta_s - 2)
=M_\alpha(\Teul) .
\end{align*}
So \eqref{w9e878126trwtsg9183h3dv652fn39} is true, and the proof of \eqref{56c5v6b2m239sf8gdfvlkka2gcjg5r65u} is complete. 
This proves the Lemma.
\end{proof}

\begin{proposition}  \label {2wsxcdr45tfvbhy7ujnmEli7tfvbnmki90}
Let $\Teul \in \DTpr$ be such that every element of $\Aeul \setminus \Aeul_0$ is decorated by $(1)$
and let $\Pscr$ be a partition of $\Aeul \setminus \Aeul_0$.

\smallskip

\begin{enumerata}
\setlength{\itemsep}{1mm}

\item $\displaystyle M(\Teul)
\ = \  \sum\undersum{ \alpha \in \Aeul \setminus \Aeul_0} M(\Teul_\alpha) \, - \, I\big( \Aeul \setminus \Aeul_0 , \Aeul \setminus \Aeul_0 \big)$

\item $\displaystyle M(\Teul) = \sum_{X \in \Pscr} M( \Teul_X )
\ - \sum\undersum{ \begin{smallmatrix} (X,Y) \in \Pscr^2 \\ X \neq Y \end{smallmatrix} } I(X,Y)$

\end{enumerata}
\end{proposition}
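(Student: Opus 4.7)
The plan is to deduce (a) directly from Lemma \ref{0d9f09gf909s990q9e0j3m3mn2mnm2moeiu83cdwx4} by summing over all arrows, and then bootstrap (b) from (a) applied to each $\Teul_X$.

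For (a), I would sum the identity $M(\Teul_\alpha) - M_\alpha(\Teul) = I(\alpha)$ from Lemma \ref{0d9f09gf909s990q9e0j3m3mn2mnm2moeiu83cdwx4} over all $\alpha \in \Aeul \setminus \Aeul_0$. The left-hand side contributes $\sum_\alpha M(\Teul_\alpha) - M(\Teul)$, using $M(\Teul) = \sum_\alpha M_\alpha(\Teul)$. On the right-hand side, expanding $I(\alpha) = \sum_{\beta \ne \alpha} Q^*(\gamma_{\alpha,\beta}) f(\alpha)f(\beta)$ and swapping the order of summation shows immediately that $\sum_\alpha I(\alpha) = I(\Aeul\setminus\Aeul_0, \Aeul\setminus\Aeul_0)$. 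Rearranging gives~(a). The hypothesis that all arrows are decorated by $(1)$ is precisely what is needed to apply the lemma.

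For (b), I would first apply (a) to each $\Teul_X$ (which lies in $\DTpr$ and inherits the property that all arrows are decorated by $(1)$ or $(0)$, so the non-dead-end arrows of $\Teul_X$ are exactly $X$, each decorated by $(1)$). This gives, for each $X \in \Pscr$,
\[
M(\Teul_X) = \sum_{\alpha \in X} M\bigl((\Teul_X)_\alpha\bigr) - I^{(X)}(X,X),
\]
where $I^{(X)}$ denotes $I$ computed in $\Teul_X$. By Remark~\ref{09o239r8d5beg56vq28gxcv9w} we have $(\Teul_X)_\alpha = \Teul_\alpha$, so the first sum is $\sum_{\alpha \in X} M(\Teul_\alpha)$. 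The crucial point, which is the main obstacle, is to verify that $I^{(X)}(X,X) = I(X,X)$, i.e.\ that for any two distinct $\alpha,\beta \in X$ the quantity $Q^*(\gamma_{\alpha,\beta})$ is the same whether computed in $\Teul_X$ or in $\Teul$. This requires a direct inspection of Definition \ref{092198c45w6evXgb4r8rhfShbg48}: the path $\gamma_{\alpha,\beta}$ is entirely contained in $\Teul'_X$, and for each intermediate vertex $v$ on this path, the product of the edge decorations near $v$ over edges of $\Teul$ incident to $v$ but not on the path factorizes as the analogous product in $\Teul'_X$ times $b_v$; the definition of $\Teul_X$ (adding a dead end with decoration $b_v$ near $v$ when $b_v \neq 1$) ensures that the corresponding product in $\Teul_X$ is also this same number.

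Having established $M(\Teul_X) = \sum_{\alpha\in X} M(\Teul_\alpha) - I(X,X)$, I would sum over $X \in \Pscr$:
\[
\sum_{X\in\Pscr} M(\Teul_X) = \sum_{\alpha \in \Aeul\setminus\Aeul_0} M(\Teul_\alpha) - \sum_{X \in \Pscr} I(X,X).
\]
Combining this with (a) and invoking Remark \ref{fgfvgertjuIPPWeCdEew8hju834hnf893krfcnfFVH6t8}(3), which states
\[
I(\Aeul\setminus\Aeul_0, \Aeul\setminus\Aeul_0) = \sum_{X\in\Pscr} I(X,X) + \sum\undersum{\begin{smallmatrix}(X,Y)\in\Pscr^2 \\ X\ne Y\end{smallmatrix}} I(X,Y),
\]
yields (b) after elementary rearrangement. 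Apart from the identification of the $I(X,X)$ terms across the two trees, the whole proof is a routine reorganisation of the single input~(a).
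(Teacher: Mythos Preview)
Your proof is correct and follows essentially the same approach as the paper: part (a) is obtained by summing the identity of Lemma~\ref{0d9f09gf909s990q9e0j3m3mn2mnm2moeiu83cdwx4} over all $\alpha$, and part (b) is obtained by applying (a) to each $\Teul_X$, using $(\Teul_X)_\alpha = \Teul_\alpha$ and $I_X(X,X)=I(X,X)$, then summing and invoking Remark~\ref{fgfvgertjuIPPWeCdEew8hju834hnf893krfcnfFVH6t8}(3). Your justification of $I^{(X)}(X,X)=I(X,X)$ is in fact more detailed than the paper's, which simply asserts it is easy to see.
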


\begin{proof}
(a) By Lemma \ref{0d9f09gf909s990q9e0j3m3mn2mnm2moeiu83cdwx4}, we have
$M_\alpha(\Teul) = M(\Teul_\alpha) - I(\alpha)$ for every $\alpha \in \Aeul \setminus \Aeul_0$. Thus,
$M(\Teul)
= \sum_{\alpha \in \Aeul \setminus \Aeul_0} M_\alpha(\Teul)
= \sum_{\alpha \in \Aeul \setminus \Aeul_0} M(\Teul_\alpha) - \sum_{\alpha \in \Aeul \setminus \Aeul_0} I(\alpha)$.
Since $\sum_{\alpha \in \Aeul \setminus \Aeul_0} I(\alpha)
= I\big( \Aeul \setminus \Aeul_0 , \Aeul \setminus \Aeul_0 \big)$, this proves (a).

(b) Let $X \in \Pscr$.
Applying part (a)  to $\Teul_X$ gives
$$
M(\Teul_X) = \textstyle \sum_{ \alpha \in X } \! M\big( (\Teul_X)_\alpha \big) \  - \ I_X( X , X ) ,
$$
where $I_X(X,X)$ means $I(X,X)$ computed in $\Teul_X$ (and $I(X,X)$ is computed in $\Teul$).
It is easy to see that $I_X(X,X) = I(X,X)$.
By Rem.\ \ref{09o239r8d5beg56vq28gxcv9w}, $(\Teul_X)_\alpha = \Teul_\alpha$ for all $\alpha \in X$.
So $M(\Teul_X) = \textstyle \sum_{ \alpha \in X } \! M(\Teul_\alpha) \  - \ I( X , X )$ for all $X \in \Pscr$. Thus,
\begin{align*}
\sum_{X \in \Pscr} M(\Teul_X)
&= \sum\undersum{ \alpha \in \Aeul \setminus \Aeul_0 } M(\Teul_\alpha) \  - \sum_{X \in \Pscr} I( X , X ) \\
&= M(\Teul) + I\big( \Aeul \setminus \Aeul_0 , \Aeul \setminus \Aeul_0 \big) - \sum_{X \in \Pscr} I( X , X ) 
\end{align*}
where the last equality follows by applying part (a) to $\Teul$. 
The desired conclusion now follows from  part (3) of  Rem.\ \ref{fgfvgertjuIPPWeCdEew8hju834hnf893krfcnfFVH6t8}.
\end{proof}

See Notation \ref{8927476hbgFfxfob93746dBBNE376rFuf82190uhsSQ} for the definition of the integers $\delta(\Teul)$ and $g(\Teul)$.

\begin{proposition}  \label {9837487239rpejf9d88}
Let $\Teul \in \DTpr$ be such that every element of $\Aeul \setminus \Aeul_0$ is decorated by $(1)$,
and let $\Pscr = \{ X_1, \dots, X_n \}$ be a partition of $\Aeul \setminus \Aeul_0$.

\smallskip

\begin{enumerata}
\setlength{\itemsep}{1mm}

\item $ \displaystyle \delta(\Teul)
= \sum_{ \alpha \in \Aeul \setminus \Aeul_0 } \! \delta(\Teul_\alpha) \  \  + \  {\textstyle \frac12} I( \Aeul \setminus \Aeul_0 , \Aeul \setminus \Aeul_0 )$

\item $\displaystyle \delta(\Teul) = \, \sum_{i=1}^n  \delta(\Teul_{X_i}) \, + \, \sum_{i < j} I(X_i,X_j) $

\item $ \displaystyle g(\Teul) -1 =
\sum\undersum{ \alpha \in \Aeul \setminus \Aeul_0 } (g(\Teul_\alpha)-1) \  \  + \  {\textstyle \frac12}
I( \Aeul \setminus \Aeul_0 , \Aeul \setminus \Aeul_0 )$

\item $\displaystyle g(\Teul) - 1 = \, \sum_{i=1}^n  (g(\Teul_{X_i})-1) \, + \, \sum_{i < j} I(X_i,X_j) $

\end{enumerata}
\end{proposition}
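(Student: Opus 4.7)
The strategy is to reduce all four assertions to Proposition \ref{2wsxcdr45tfvbhy7ujnmEli7tfvbnmki90} by exploiting the simple bookkeeping of $F$ under our hypothesis. The key observation is that since every $\alpha \in \Aeul \setminus \Aeul_0$ is decorated by $(1)$, we have $F(\alpha) = 1$, so $F(\Teul) = |\Aeul \setminus \Aeul_0|$. Moreover, inspecting Definition \ref{092198c45w6evXgb4r8rhfShbg48}, the arrows of $\Teul_X$ in $\Aeul \setminus \Aeul_0$ are exactly the elements of $X$ (the added $\alpha_v$ are decorated by $(0)$, hence lie in $\Aeul_0$), and each such arrow still carries decoration $(1)$. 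Therefore
\[
F(\Teul_X) = |X| \quad \text{for every nonempty } X \subseteq \Aeul \setminus \Aeul_0,
\]
which in particular yields the additivity identities
\[
\sum_{\alpha \in \Aeul \setminus \Aeul_0} F(\Teul_\alpha) = F(\Teul) \quad \text{and} \quad \sum_{i=1}^n F(\Teul_{X_i}) = F(\Teul).
\]

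With $F$ under control, (a) and (b) follow immediately from the definition $\delta(\Teul) = \frac12(F(\Teul) - M(\Teul))$ together with Proposition \ref{2wsxcdr45tfvbhy7ujnmEli7tfvbnmki90}. For (a), one substitutes $M(\Teul) = \sum_\alpha M(\Teul_\alpha) - I(\Aeul\setminus\Aeul_0,\Aeul\setminus\Aeul_0)$ (part (a) of that proposition) and uses the additivity of $F$ to collect the terms into $\sum_\alpha \delta(\Teul_\alpha)$. For (b), one uses part (b) of Prop.\ \ref{2wsxcdr45tfvbhy7ujnmEli7tfvbnmki90} to expand $M(\Teul)$, the additivity of $F$ across the partition $\Pscr$, and the symmetry $I(X_i,X_j) = I(X_j,X_i)$ from Remark \ref{fgfvgertjuIPPWeCdEew8hju834hnf893krfcnfFVH6t8}(1) to rewrite the double sum over ordered pairs with $X \neq Y$ as $2\sum_{i<j} I(X_i,X_j)$; the factor $\frac12$ in $\delta$ cancels the $2$.

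Parts (c) and (d) proceed in exactly the same way using the identity $g(\Teul) - 1 = -\tfrac12(M(\Teul) + F(\Teul))$. Writing
\[
g(\Teul) - 1 = -\tfrac12 M(\Teul) - \tfrac12 F(\Teul),
\]
one again substitutes the formulas of Proposition \ref{2wsxcdr45tfvbhy7ujnmEli7tfvbnmki90} for $M(\Teul)$ and uses the additivity of $F$ to recognize each individual contribution as $g(\Teul_\alpha) - 1$ (respectively $g(\Teul_{X_i}) - 1$). The sign in front of $I$ becomes positive because $-\tfrac12 \cdot (-1) = \tfrac12$, matching the formulas as stated.

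There is essentially no obstacle beyond verifying that $F(\Teul_X) = |X|$, which is the real content of this proposition — the formulas for $\delta$ and $g$ are then purely formal consequences of Proposition \ref{2wsxcdr45tfvbhy7ujnmEli7tfvbnmki90}. The only minor care needed is in checking that the added arrows $\alpha_v$ in Definition \ref{092198c45w6evXgb4r8rhfShbg48} are indeed decorated by $(0)$, so that they contribute to $\Aeul_0(\Teul_X)$ and not to the counting of $F(\Teul_X)$; this is explicit in that definition.
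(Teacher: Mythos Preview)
Your proposal is correct and follows essentially the same approach as the paper: both reduce to Proposition~\ref{2wsxcdr45tfvbhy7ujnmEli7tfvbnmki90} via the observation that $F(\Teul_X)=|X|$ under the hypothesis $f(\Aeul\setminus\Aeul_0)\subseteq\{1\}$, then substitute into the definitions of $\delta$ and $g$. The paper streamlines slightly by noting that (a) and (c) are special cases of (b) and (d), but the content is the same.
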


\begin{proof}
Assertion (a) (resp.\ (c)) is the special case of assertion (b) (resp.\ (d)) where each $X_i$ has exactly one element;
so let us only prove (b) and (d).
Since each element of $\Aeul \setminus \Aeul_0$ is decorated by $(1)$, we have
$F(\Teul) = | \Aeul \setminus \Aeul_0 | = \sum_{i=1}^n |X_i| = \sum_{i=1}^n F(\Teul_{X_i})$
and (by Prop.\ \ref{2wsxcdr45tfvbhy7ujnmEli7tfvbnmki90})
\begin{equation*}
M(\Teul) = \sum_{X \in \Pscr} M( \Teul_X )
\ - \sum\undersum{ \begin{smallmatrix} (X,Y) \in \Pscr^2 \\ X \neq Y \end{smallmatrix} } I(X,Y) 
\ = \ \sum_{i=1}^n M( \Teul_{X_i} ) \ - \ 2 \sum_{i < j} I(X_i,X_j) .
\end{equation*}
Thus,
\begin{align*}
2 \delta(\Teul) &= F(\Teul) - M(\Teul) 
= \sum_{i=1}^n F(\Teul_{X_i})  -  \sum_{i=1}^n M( \Teul_{X_i} ) \ + \ 2 \sum_{i < j} I(X_i,X_j) \\
&= 2 \sum_{i=1}^n \delta(\Teul_{X_i})  \ + \ 2 \sum_{i < j} I(X_i,X_j) , \\
2 g(\Teul) - 2 &=  - M(\Teul) - F(\Teul)  
= - \sum_{i=1}^n M( \Teul_{X_i} ) \ + \ 2 \sum_{i < j} I(X_i,X_j) -  \sum_{i=1}^n F(\Teul_{X_i})  \\
&=  \sum_{i=1}^n (2 g(\Teul_{X_i}) - 2) \ + \ 2 \sum_{i < j} I(X_i,X_j) .
\end{align*}
Dividing by $2$ gives (b) and (d), so we are done.
\end{proof}

We shall now consider what happens when, in Propositions \ref{2wsxcdr45tfvbhy7ujnmEli7tfvbnmki90} and \ref{9837487239rpejf9d88},
we drop the assumption that all elements of $\Aeul \setminus \Aeul_0$ are decorated by $(1)$.
We obtain results similar to the above, but where each formula contains a ``correction term''.
The correction term in the formula for $M(\Teul)$ (resp.\ $\delta(\Teul)$, $g(\Teul)$)
is denoted $\Mgoth(\Teul,X)$ (resp.\ $\Dgoth(\Teul,X)$, $\Ggoth(\Teul,X)$) and is defined in
Notation \ref{9S908uhggjgyDHjcB83yedgSUgh62ffBoks} (resp.\ \ref{87654xvbnqctfctdvuhdiu3o9d4efwdgRSDFWe439w8d}).

\begin{notation}  \label {9S908uhggjgyDHjcB83yedgSUgh62ffBoks}
Let $\Teul = (\Veul, \Aeul, \Eeul, f, q) \in \DT$.
For each subset $X$ of $\Aeul \setminus \Aeul_0$, define
$$
\Mgoth(\Teul,X)
= \sum_{ \alpha \in X } \textstyle \big( 1 - \frac1{f(\alpha)} \big) I\big( \{ \alpha \}, X^c \big)
$$
where $X^c = (\Aeul \setminus \Aeul_0) \setminus X$.
\end{notation}

\begin{remark} \label {o287348712hdbfwzaa2a1aqxpmcgmpcrtieuhj873sE}
Let $\Teul = (\Veul, \Aeul, \Eeul, f, q) \in \DT$.
\begin{enumerate}

\item If $X \subseteq \Aeul \setminus \Aeul_0$ is such that $f(\alpha)=1$ for all $\alpha \in X$, then $\Mgoth(\Teul,X)=0$.

\item If $X \subseteq \Aeul \setminus \Aeul_0$ then $\Mgoth(\Teul,X) \in \Integ$,
because $f(\alpha) \mid I\big( \{ \alpha \}, X^c \big)$ for all $\alpha \in X$.

\item For each $\alpha \in \Aeul \setminus \Aeul_0$,
$\Mgoth(\Teul, \{\alpha\} ) = \big( 1 - \frac1{f(\alpha)} \big) I(\alpha) = ( f(\alpha) - 1 ) p(\alpha,e_\alpha)$,
where $e_\alpha$ is the unique edge incident to $\alpha$.

\end{enumerate}
\end{remark}

\begin{theorem}  \label {sfcv073btFbhd8SwAQedfffGnUd21653OLlp743}
Let $\Teul \in \DTpr$ and let $\Pscr$ be a partition of $\Aeul \setminus \Aeul_0$.

\smallskip

\begin{enumerata}
\setlength{\itemsep}{1.5mm}

\item
$\displaystyle M(\Teul) = \sum\undersum{ \alpha \in \Aeul \setminus \Aeul_0 } M( \Teul_\alpha )
\,\,  + \sum\undersum{\alpha \in \Aeul \setminus \Aeul_0} \Mgoth(\Teul,\{\alpha\}) \ - \ I( \Aeul \setminus \Aeul_0 , \Aeul \setminus \Aeul_0 ) $

\item   \label {djbGF09347gfhbSRaDGh6TYHJ93ihfV5y2i20}
$\displaystyle M(\Teul) = \sum_{X \in \Pscr} M( \Teul_X ) + \sum_{X \in \Pscr} \Mgoth(\Teul,X)
\ - \sum\undersum{ \begin{smallmatrix} (X,Y) \in \Pscr^2 \\ X \neq Y \end{smallmatrix} } I(X,Y)$

\end{enumerata}
\end{theorem}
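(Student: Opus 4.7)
Assertion (a) is the special case of (b) in which $\Pscr$ consists of singletons, so I prove (a) first and derive (b) from it. The idea behind (a) is to reduce to the $f\equiv 1$ case of Prop.\ \ref{2wsxcdr45tfvbhy7ujnmEli7tfvbnmki90}(a), exploiting the linearity of $x_{v,\alpha}$ in $f(\alpha)$.

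Let $\Teul^{(1)}$ be the tree obtained from $\Teul$ by replacing $f$ with $f^{(1)}$, where $f^{(1)}(\alpha)=1$ for $\alpha\in\Aeul\setminus\Aeul_0$ and $f^{(1)}(\alpha)=0$ for $\alpha\in\Aeul_0$. Since the pseudo-root conditions of Def.\ \ref{pod293805h4s7b2w9310jdcjd} involve only the edge decorations $q$, we have $\Teul^{(1)}\in\DTpr$ with the same pseudo-root $v_0$. The formula $x_{v,\alpha}=f(\alpha)\prod_\epsilon q(\epsilon,\gamma_{v,\alpha})$ gives $M_\alpha(\Teul)=f(\alpha)M_\alpha(\Teul^{(1)})$. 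Moreover, $\Teul_\alpha$ and $(\Teul^{(1)})_\alpha$ share the same underlying tree and all edge decorations, as the weights $b_v$ of Def.\ \ref{092198c45w6evXgb4r8rhfShbg48} depend only on $q$; they differ only in that the unique nonzero arrow $\alpha$ carries decoration $(f(\alpha))$ in the former and $(1)$ in the latter, hence $M(\Teul_\alpha)=f(\alpha)M((\Teul^{(1)})_\alpha)$. Applying Lemma \ref{0d9f09gf909s990q9e0j3m3mn2mnm2moeiu83cdwx4} to $\Teul^{(1)}$ and multiplying by $f(\alpha)$ therefore yields, for each $\alpha\in\Aeul\setminus\Aeul_0$,
\[
M(\Teul_\alpha)-M_\alpha(\Teul) = f(\alpha)\sum_{\beta\in(\Aeul\setminus\Aeul_0)\setminus\{\alpha\}}Q^*(\gamma_{\alpha,\beta}).
\]
Summing over $\alpha$ and using $M(\Teul)=\sum_\alpha M_\alpha(\Teul)$ gives $\sum_\alpha M(\Teul_\alpha)-M(\Teul)=\sum_{\alpha\neq\beta}f(\alpha)Q^*(\gamma_{\alpha,\beta})$. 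Using Rem.\ \ref{o287348712hdbfwzaa2a1aqxpmcgmpcrtieuhj873sE}(3), which gives $\Mgoth(\Teul,\{\alpha\})=(f(\alpha)-1)\sum_{\beta\neq\alpha}f(\beta)Q^*(\gamma_{\alpha,\beta})$, a routine expansion (combined with the $(\alpha,\beta)$-symmetry of $Q^*(\gamma_{\alpha,\beta})$) yields
\[
\sum_\alpha \Mgoth(\Teul,\{\alpha\}) - I(\Aeul\setminus\Aeul_0,\Aeul\setminus\Aeul_0) = -\sum_{\alpha\neq\beta} f(\alpha)Q^*(\gamma_{\alpha,\beta}).
\]
Substituting the latter into the former proves (a).

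To get (b), apply (a) to each $\Teul_X$ for $X\in\Pscr$. Rem.\ \ref{09o239r8d5beg56vq28gxcv9w} gives $(\Teul_X)_\alpha=\Teul_\alpha$, and a direct verification shows that $Q^*(\gamma_{\alpha,\beta})$ is the same in $\Teul_X$ as in $\Teul$ whenever $\alpha,\beta\in X$: at each internal vertex $v$ of $\gamma_{\alpha,\beta}$, the edges of $\Teul$ incident to $v$ but not lying in $\Teul_X'$ contribute a total factor $b_v$, which equals the $q$-decoration of the single dummy edge attached to $v$ in $\Teul_X$. Hence $I^{\Teul_X}(X,X)=I(X,X)$ and $\Mgoth(\Teul_X,\{\alpha\})=(1-1/f(\alpha))I(\{\alpha\},X\setminus\{\alpha\})$. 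Summing the resulting (a)-identities over $X\in\Pscr$, subtracting (a) for $\Teul$, and splitting $I(\alpha)=I(\{\alpha\},X\setminus\{\alpha\})+I(\{\alpha\},X^c)$ for $\alpha\in X$ (where $X^c=(\Aeul\setminus\Aeul_0)\setminus X$), the $I(\{\alpha\},X\setminus\{\alpha\})$-contributions cancel, the $I(\{\alpha\},X^c)$-contributions reassemble into $\sum_X\Mgoth(\Teul,X)$, and the residual $\sum_X I(X,X)-I(\Aeul\setminus\Aeul_0,\Aeul\setminus\Aeul_0)$ equals $-\sum_{X\neq Y}I(X,Y)$ by Rem.\ \ref{fgfvgertjuIPPWeCdEew8hju834hnf893krfcnfFVH6t8}(3), producing (b).

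The main obstacle is the bookkeeping in the last step: verifying the identification $Q^{*,\Teul_X}=Q^*$ on paths internal to $X$ (essential for transferring the $I$ and $\Mgoth$ quantities between $\Teul$ and $\Teul_X$) and keeping track of which correction contributions cancel when the $X$-wise (a)-identities are subtracted from the global one. By contrast, the reduction to the $f\equiv 1$ case is essentially automatic once one notes the linearity $x_{v,\alpha}=f(\alpha)\cdot(\text{$f$-independent factor})$.
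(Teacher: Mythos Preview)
Your proof is correct, and for part (a) it takes a genuinely different and cleaner route than the paper. The paper proves (a) by induction on the number of arrows with $f(\alpha)\neq 1$: at each step it chooses such an arrow $\alpha$, performs the splitting $\Spl(\Teul,e)$ at the edge incident to $\alpha$, and invokes the inductive hypothesis on $\Teul_1$, which requires several pages of bookkeeping (tracking how $I^{(1)}(\bX,\bX)$, the sums of $M((\Teul_1)_\beta)$, and the correction terms change under splitting). Your approach sidesteps the splitting machinery entirely: the observation that $x_{v,\alpha}$, $M_\alpha(\Teul)$, and $M(\Teul_\alpha)$ are each linear in $f(\alpha)$ lets you pass directly from Lemma~\ref{0d9f09gf909s990q9e0j3m3mn2mnm2moeiu83cdwx4} (the $f\equiv 1$ case) to the general case by multiplying through by $f(\alpha)$, after which the identity reduces to the symmetric-sum algebra you describe. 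This is both shorter and conceptually more transparent; the paper's route, on the other hand, illustrates how the splitting operation interacts with the correction terms, which may have independent value for later applications. For part (b), your derivation (apply (a) to $\Teul$ and to each $\Teul_X$, use $(\Teul_X)_\alpha=\Teul_\alpha$ and $Q^{*,\Teul_X}=Q^*$ on internal paths, then invoke Rem.~\ref{fgfvgertjuIPPWeCdEew8hju834hnf893krfcnfFVH6t8}(3)) is exactly the paper's argument.
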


\begin{proof}
Observe that (a) is the special case of \eqref{djbGF09347gfhbSRaDGh6TYHJ93ihfV5y2i20}
where every element of $\Pscr$ has exactly one element.
Moreover, assertion (a) is equivalent to 
\begin{enumerata}
\item[(a$'$)]
$\displaystyle M(\Teul) = \sum\undersum{ \alpha \in \Aeul \setminus \Aeul_0 } M( \Teul_\alpha )
\,\,  + \sum\undersum{\alpha \in \Aeul \setminus \Aeul_0} {\textstyle \big( 1 - \frac1{f(\alpha)} \big) I(\alpha) }
\ - \ I( \Aeul \setminus \Aeul_0 , \Aeul \setminus \Aeul_0 ) $,
\end{enumerata}
because $\Mgoth(\Teul,\{\alpha\}) =  \textstyle \big( 1 - \frac1{f(\alpha)} \big) I(\alpha)$ for every $\alpha \in \Aeul \setminus \Aeul_0$.
Let us prove (a$'$).

Let $\bbX$ be the set of elements $\Teul$ of $\DTpr$ for which equality (a$'$) is true.
Given $\Teul = (\Veul,\Aeul,\Eeul,f,q,v_0) \in \DTpr$,
let $\langle \Teul \rangle$ denote the number of arrows $\alpha \in \Aeul \setminus \Aeul_0$
that satisfy $f(\alpha) \neq 1$ (so $\langle \Teul \rangle \in \Nat$).
It suffices to prove the following two claims:
\begin{gather}  \label {09f9091292j3v34x567y8gzzmjwi87}
\text{$\bbX$ contains all $\Teul \in \DTpr$ such that $\langle\Teul\rangle = 0$;} \\
\label {0990vcxzes2zqswfcsdiivgkle1e3gg5i12e}
\begin{minipage}[t]{.85\textwidth}
If $m>0$ and $\bbX$ contains all $\Teul \in \DTpr$ such that $\langle\Teul\rangle < m$
then $\bbX$ contains all $\Teul \in \DTpr$ such that $\langle\Teul\rangle = m$.
\end{minipage}
\end{gather}

If $\Teul \in \DTpr$ is such that $\langle\Teul\rangle = 0$
then $\big( 1 - \frac1{f(\alpha)} \big) I(\alpha)  =0$ for every $\alpha \in \Aeul \setminus \Aeul_0$,
so Proposition \ref{2wsxcdr45tfvbhy7ujnmEli7tfvbnmki90}(a) implies that equality (a$'$) holds for $\Teul$
and hence $\Teul \in \bbX$.
This proves \eqref{09f9091292j3v34x567y8gzzmjwi87}.

Let us prove \eqref{0990vcxzes2zqswfcsdiivgkle1e3gg5i12e}.
Let $m>0$ and suppose that $\bbX$ contains all $\Teul \in \DTpr$ such that $\langle\Teul\rangle < m$.
Let $\Teul = (\Veul,\Aeul,\Eeul,f,q,v_0) \in \DTpr$ be such that $\langle \Teul \rangle = m$.
We have to show that $\Teul \in \bbX$.
There exists $\alpha \in \Aeul \setminus \Aeul_0$ such that $f(\alpha) \neq 1$;
choose such an $\alpha$ and write $\gamma_{v_0,\alpha} = (v_0, v_1, \dots, v_n, \alpha)$ (where $n\ge0$).
Let $s = f(\alpha)$, let $e = \{v_n,\alpha\}$, let $p = p(\alpha,e)$ and recall that $F(\alpha) = \gcd(s,p)$;
observe that $F(\alpha) > 0$, because $s \neq 0$.
Let $\{\Teul_1,\Teul_2\} = \Spl(\Teul,e)$; then $\Teul_1, \Teul_2 \in \DT$ look like this:
\begin{equation} \label {ieirywu76cbeHgl3uy87wVh827t36273892}
\raisebox{-12\unitlength}{\begin{picture}(86,22)(-40,-17)

\put(-30,0){\circle{1}}
\put(-30,2){\makebox(0,0)[b]{\footnotesize $v_n$}}
\put(-29.5,0){\line(1,0){14}}
\put(-15,0){\circle*{1}}
\put(-14.5,0){\line(1,0){7.7}} \put(-7.1,0){\makebox(0,0)[l]{\footnotesize $<\! F(\alpha)$}}
\put(-15,-.5){\vector(0,-1){9.5}}
\put(-14,-9){\makebox(0,0)[l]{\footnotesize $(0)$}}
\put(-16,-9){\makebox(0,0)[r]{\tiny $\alpha_{1,0}$}}
\put(-18,-14){\makebox(0,0){$\displaystyle\underbrace{\rule{42\unitlength}{0mm}}_{\Teul_1}$}}

\put(-14,2){\makebox(0,0)[b]{\footnotesize $z_1$}}
\put(24,2){\makebox(0,0)[b]{\footnotesize $z_2$}}

\put(-17.5,1.5){\makebox(0,0)[dr]{\tiny $-a_2$}}
\put(-14.5,-3){\makebox(0,0)[l]{\tiny $a_1$}}

\put(27.5,1.5){\makebox(0,0)[dl]{\tiny $-a_1$}}
\put(25.5,-3){\makebox(0,0)[l]{\tiny $a_2$}}

\put(25.5,0){\vector(1,0){14.5}}
\put(41,0){\makebox(0,0)[l]{\footnotesize $(s)$}}
\put(39,1){\makebox(0,0)[b]{\footnotesize $\alpha$}}

\put(25,0){\circle*{1}}
\put(24.5,0){\line(-1,0){7.7}} \put(17.1,0){\makebox(0,0)[r]{\footnotesize $F(\alpha) >$}}
\put(25,-.5){\vector(0,-1){9.5}}
\put(26,-9){\makebox(0,0)[l]{\footnotesize $(0)$}}
\put(24,-9){\makebox(0,0)[r]{\tiny $\alpha_{2,0}$}}
\put(25,-14){\makebox(0,0){$\displaystyle\underbrace{\rule{36\unitlength}{0mm}}_{\Teul_2}$}}

\put(-30.35355,0.35355){\line(-1,1){4}}
\put(-30.35355,-0.35355){\line(-1,-1){4}}

\end{picture}} \quad \begin{array}{l}
a_1 = \frac{s}{F(\alpha)} \\ a_2 = \frac{p}{F(\alpha)}
\end{array}
\end{equation}
It is clear that  $\Teul_1 \in \DTpr$ where $\Teul_1$ has the same pseudo-root $v_0$ as $\Teul$.
Also note that $a_1 \neq 0$, because $s\neq0$.
It is easy to calculate that $M(\Teul_2) = 0$ and $F(\Teul_2) = 2F(\alpha)$; so Lemma \ref{dBfo23bn6i8404f8hfwA7ylfaroj} gives 
$M(\Teul_1) = M(\Teul)$ and $F(\Teul_1) = F(\Teul)$.
Since $\langle \Teul_1 \rangle = m-1$, the inductive hypothesis implies that $\Teul_1 \in \bbX$, so
\begin{equation}  \label {98fd8ufhhHsgfGnvh2836gcbxueiXDcfi81}
M(\Teul) = M(\Teul_1) =
\sum_{ \beta \in \bX } M( (\Teul_1)_\beta ) + 
\sum_{ \beta \in \bX } {\textstyle(1 - \frac1{f^{(1)}(\beta)}) I^{(1)}(\beta)} - I^{(1)}(\bX,\bX)
\end{equation}
where we use the notations $\Teul_1 = (\Veul^{(1)},\Aeul^{(1)},\Eeul^{(1)},f^{(1)},q^{(1)},v_0)$
and $\bX = \Aeul^{(1)} \setminus \Aeul_0^{(1)}$,
and where $I^{(1)}(\, \underline{\ \ }\, )$ means $I(\, \underline{\ \ }\, )$ computed in $\Teul_1$.
We claim:
\begin{equation}  \label {526egvloki6cOnbvcxcqwriwcbe8ry}
I^{(1)}(\bX,\bX) = I( \Aeul \setminus \Aeul_0 , \Aeul \setminus \Aeul_0 ) - a_1 a_2 F(\alpha) \big( F(\alpha) - 1 \big) .
\end{equation}
Indeed, define 
\begin{align*}
\bbA &= \setspec{ \beta \in \Aeul^{(1)} \setminus\Aeul^{(1)}_0 }{ \text{$\beta$ is not adjacent to $z_1$ in $\Teul_1$} }, \\
\bbB &= \setspec{ \beta \in \Aeul^{(1)} \setminus\Aeul^{(1)}_0 }{ \text{$\beta$ is adjacent to $z_1$ in $\Teul_1$} }.
\end{align*}
Then $\bX = \Aeul^{(1)} \setminus\Aeul^{(1)}_0 = \bbA \cup \bbB$,  $\bbA \cap \bbB = \emptyset$, 
$\Aeul \setminus\Aeul_0 = \bbA \cup \{ \alpha \}$ and  $\bbA \cap \{ \alpha \} = \emptyset$.
We have 
$ I( \Aeul \setminus \Aeul_0 , \Aeul \setminus \Aeul_0 )
= I( \bbA \cup \{\alpha\}, \bbA \cup \{\alpha\} ) 
= I(\bbA, \bbA) + 2 I(\{\alpha\},\bbA) 
= I(\bbA, \bbA) + 2 s p$,
so  $I^{(1)}(\bbA, \bbA) = I(\bbA, \bbA) = I( \Aeul \setminus \Aeul_0 , \Aeul \setminus \Aeul_0 ) - 2sp$ and hence
\begin{align*}
I^{(1)}(\bX,\bX) &=  I^{(1)}(\bbA, \bbA) + 2 I^{(1)}(\bbA, \bbB) + I^{(1)}(\bbB, \bbB) \\
&= I( \Aeul \setminus \Aeul_0 , \Aeul \setminus \Aeul_0 ) - 2sp + 2 I^{(1)}(\bbA, \bbB) + I^{(1)}(\bbB, \bbB) .
\end{align*}
For each $\beta \in \bbB$ we have $I^{(1)}(\bbA, \{ \beta \} ) = a_1 p$, so $I^{(1)}(\bbA, \bbB) = a_1 p F(\alpha) = sp$.
Since $I^{(1)}(\bbB, \bbB) = -a_1a_2F(\alpha)\big( F(\alpha) - 1 \big)$, we proved \eqref{526egvloki6cOnbvcxcqwriwcbe8ry}.

The fact that $f^{(1)}(\beta) = 1$ for all $\beta \in \bbB$ gives the first equality in
$$
\sum_{ \beta \in \bX } {\textstyle(1 - \frac1{f^{(1)}(\beta)}) I^{(1)}(\beta)} 
= \sum_{ \beta \in \bbA } {\textstyle(1 - \frac1{f^{(1)}(\beta)}) I^{(1)}(\beta)} 
= \sum_{ \beta \in \bbA } {\textstyle(1 - \frac1{f(\beta)}) I(\beta)} .
$$
For the second equality, the reader can check that  $I^{(1)}(\beta) = I(\beta)$  for all $\beta \in \bbA$ (and it is clear 
that $f^{(1)}(\beta) = f(\beta)$  for all $\beta \in \bbA$).  So,
\begin{equation}  \label {dp927c8agn184sjbvvcxy2w27gb}
\sum_{ \beta \in \bX } {\textstyle(1 - \frac1{f^{(1)}(\beta)}) I^{(1)}(\beta)} = 
- {\textstyle(1 - \frac1{f(\alpha)}) I(\alpha)} \ +
\sum\undersum{ \beta \in \Aeul \setminus \Aeul_0 } {\textstyle(1 - \frac1{f(\beta)}) I(\beta)} .
\end{equation}

Next, let us fix $\beta \in \bbB$ and compare $M( (\Teul_1)_\beta )$ to $M(\Teul_\alpha)$.
The trees $\Teul_\alpha$ and $(\Teul_1)_\beta$ are as follows:
\begin{align*}
&{\begin{picture}(94,15)(-15,-12)
\put(-15,-1){$\Teul_\alpha :$}
\put(24,-4){\oval(59,16)}
\put(0,0){\circle{1}}
\put(15,0){\circle{1}}
\put(30,0){\makebox(0,0){$\dots$}}
\put(45,0){\circle{1}}
\put(.5,0){\line(1,0){14}}
\put(15.5,0){\line(1,0){11}}
\put(45.5,0){\vector(1,0){16}}
\put(44.5,0){\line(-1,0){11}}
\put(15,-.5){\vector(0,-1){10}}
\put(45,-.5){\vector(0,-1){10}}
\put(0,1){\makebox(0,0)[b]{\tiny $v_0$}}
\put(15,1){\makebox(0,0)[b]{\tiny $v_1$}}
\put(45,1){\makebox(0,0)[b]{\tiny $v_n$}}
\put(62,0){\makebox(0,0)[l]{\tiny $(s)$}}
\put(15.5,-3){\makebox(0,0)[l]{\tiny $b_1$}}
\put(45.5,-3){\makebox(0,0)[l]{\tiny $b_n$}}
\put(16,-10){\makebox(0,0)[l]{\tiny $(0)$}}
\put(46,-10){\makebox(0,0)[l]{\tiny $(0)$}}
\put(60,1){\makebox(0,0)[b]{\tiny $\alpha$}}
\end{picture}}
\\
&{\begin{picture}(94,15)(-15,-12)
\put(-19,-1){$(\Teul_1)_\beta :$}
\put(24,-4){\oval(59,16)}
\put(0,0){\circle{1}}
\put(15,0){\circle{1}}
\put(30,0){\makebox(0,0){$\dots$}}
\put(45,0){\circle{1}}
\put(60,0){\circle{1}}
\put(.5,0){\line(1,0){14}}
\put(15.5,0){\line(1,0){11}}
\put(45.5,0){\line(1,0){14}}
\put(44.5,0){\line(-1,0){11}}
\put(60.5,0){\vector(1,0){14}}
\put(15,-.5){\vector(0,-1){10}}
\put(45,-.5){\vector(0,-1){10}}
\put(60,-.5){\vector(0,-1){10}}
\put(0,1){\makebox(0,0)[b]{\tiny $v_0$}}
\put(15,1){\makebox(0,0)[b]{\tiny $v_1$}}
\put(45,1){\makebox(0,0)[b]{\tiny $v_n$}}
\put(73.5,1){\makebox(0,0)[b]{\tiny $\beta$}}
\put(75.5,0){\makebox(0,0)[l]{\tiny $(1)$}}
\put(15.5,-3){\makebox(0,0)[l]{\tiny $b_1$}}
\put(45.5,-3){\makebox(0,0)[l]{\tiny $b_n$}}
\put(16,-10){\makebox(0,0)[l]{\tiny $(0)$}}
\put(46,-10){\makebox(0,0)[l]{\tiny $(0)$}}
\put(60,2){\makebox(0,0)[b]{\tiny $z_1$}}
\put(62.5,.5){\makebox(0,0)[bl]{\tiny $1$}}
\put(57.5,.5){\makebox(0,0)[br]{\tiny $-a_2$}}
\put(60.5,-3){\makebox(0,0)[l]{\tiny $a_1$}}
\put(61,-10){\makebox(0,0)[l]{\tiny $(0)$}}
\put(59,-10){\makebox(0,0)[r]{\tiny $\alpha_{1,0}$}}
\end{picture}}
\end{align*}
where the content of the ovals is identical in the two trees,
and where the dead end incident to $v_i$ is present if and only if $b_i\neq1$ (i.e., if $b_i=1$ then $\delta_{v_i}=2$).
We write $N^{(\beta)}_{x}$ and $\delta^{(\beta)}_x$ (resp.\ $N^{(\alpha)}_{x}$ and $\delta^{(\alpha)}_x$) to denote quantities 
computed in $(\Teul_1)_\beta$ (resp.\ in $\Teul_\alpha$).
Let $\Oeul$ be the set of vertices and arrows in the ovals.
For each $x \in \Oeul$,
we have $\delta^{(\beta)}_x = \delta^{(\alpha)}_x$ and
$N^{(\beta)}_x = \frac{a_1}{s} N_x^{(\alpha)} = \frac{1}{F(\alpha)} N_x^{(\alpha)}$, so
\begin{equation*}
\textstyle
M( (\Teul_1)_\beta ) = - \sum_{ x \in \Oeul } N^{(\beta)}_x(\delta^{(\beta)}_x-2) - N^{(\beta)}_{z_1} + N^{(\beta)}_{\alpha_{1,0}}
= \frac{1}{F(\alpha)} M( \Teul_\alpha ) -  N^{(\beta)}_{z_1} + N^{(\beta)}_{\alpha_{1,0}} .
\end{equation*}
Since $N^{(\beta)}_{\alpha_{1,0}} = -a_2$ and $N^{(\beta)}_{z_1} = -a_1a_2$,
we have $M( (\Teul_1)_\beta ) = \frac{1}{F(\alpha)} M( \Teul_\alpha ) + a_1 a_2 - a_2$ for each $\beta \in \bbB$, so
$$
\textstyle
\sum_{\beta \in \bbB} M( (\Teul_1)_\beta ) =  M( \Teul_\alpha ) + F(\alpha) (a_1 a_2 - a_2) .
$$
Since $(\Teul_1)_\beta = \Teul_\beta$ for all $\beta \in \bbA$, it follows that 
\begin{equation}  \label {76d2f9kncbYTYqcb9q2307rtg40847}
\sum_{\beta \in \bX} M( (\Teul_1)_\beta )
=  \sum\undersum{ \beta \in \Aeul \setminus \Aeul_0 } M( \Teul_\beta ) \  + \  F(\alpha) (a_1 a_2 - a_2) .
\end{equation}

Putting together \eqref{98fd8ufhhHsgfGnvh2836gcbxueiXDcfi81}, \eqref{526egvloki6cOnbvcxcqwriwcbe8ry},
\eqref{dp927c8agn184sjbvvcxy2w27gb} and \eqref{76d2f9kncbYTYqcb9q2307rtg40847} gives
$$
M(\Teul) = \sum\undersum{ \beta \in \Aeul \setminus \Aeul_0 } M( \Teul_\beta )
\,\,  + \sum\undersum{\beta \in \Aeul \setminus \Aeul_0} {\textstyle \big( 1 - \frac1{f(\beta)} \big) I(\beta) }
\ - \ I( \Aeul \setminus \Aeul_0 , \Aeul \setminus \Aeul_0 ) \ + \  r ,
$$
where
\begin{align*}
r &= F(\alpha) (a_1 a_2 - a_2)  +  a_1 a_2 F(\alpha) ( F(\alpha) - 1 )  - {\textstyle \big( 1 - \frac1{ f(\alpha) } \big) }  I(\alpha) \\
&= F(\alpha) (- a_2)  +  a_1 a_2 F(\alpha)^2 - {\textstyle \big( 1 - \frac1{ s } \big) }  sp 
= -p + sp  - {\textstyle \big( 1 - \frac1{ s } \big) }  sp = 0 .
\end{align*}
So $\Teul \in \bbX$, i.e., we proved \eqref{0990vcxzes2zqswfcsdiivgkle1e3gg5i12e}. So (a$'$) is proved, and so is (a).

\medskip

(b) Consider $\Teul = (\Veul, \Aeul, \Eeul, f, q, v_0) \in \DTpr$ and a partition $\Pscr$ of $\Aeul \setminus \Aeul_0$.
Let $X \in \Pscr$.  We have $\Teul_X = (\Veul_X, \Aeul_X, \Eeul_X, f_X, q_X, v_0) \in \DTpr$,
so (a$'$) applied to $\Teul_X$ gives
$$
M(\Teul_X) = \sum_{ \alpha \in X } M( (\Teul_X)_\alpha )
+ \sum_{\alpha \in X} {\textstyle \big( 1 - \frac1{f_X(\alpha)} \big) I_X(\alpha) } \ - \ I_X(X,X) 
$$
where $I_X(\alpha)$ and $I_X(X,X)$ mean $I(\alpha)$ and $I(X,X)$ computed in $\Teul_X$.
We have $f_X(\alpha) = f(\alpha)$ and (by Rem.\ \ref{09o239r8d5beg56vq28gxcv9w}) $(\Teul_X)_\alpha = \Teul_\alpha$,
and it is easy to see that $I_X(\alpha) = I( \{\alpha\}, X )$ and $I_X(X,X)=I(X,X)$. Thus,
$$
M(\Teul_X) = \sum_{ \alpha \in X } M( \Teul_\alpha )
+ \sum_{\alpha \in X} {\textstyle \big( 1 - \frac1{f(\alpha)} \big) I( \{\alpha\}, X ) } \ - \ I(X,X) 
$$
for all $X \in \Pscr$.
This together with (a$'$) (applied to $\Teul$) shows that 
$M(\Teul) - \sum_{X \in \Pscr} M(\Teul_X) = A + B + C$ where
\begin{gather*}
A = \sum\undersum{ \alpha \in \Aeul \setminus \Aeul_0 } M( \Teul_\alpha ) - \sum_{X \in \Pscr} \sum_{ \alpha \in X } M( \Teul_\alpha ) = 0 , \\
B = \sum\undersum{\alpha \in \Aeul \setminus \Aeul_0} {\textstyle \big( 1 - \frac1{f(\alpha)} \big) I(\alpha) }
-  \sum_{X \in \Pscr} \sum_{\alpha \in X} {\textstyle \big( 1 - \frac1{f(\alpha)} \big) I( \{\alpha\}, X ) }, \\
C =  - \ I( \Aeul \setminus \Aeul_0 , \Aeul \setminus \Aeul_0 )  +  \sum_{X \in \Pscr} I(X,X)
= - \sum\undersum{ \begin{smallmatrix} (X,Y) \in \Pscr^2 \\ X \neq Y \end{smallmatrix} } I(X,Y) , 
\end{gather*}
the last equality by Rem.\ \ref{fgfvgertjuIPPWeCdEew8hju834hnf893krfcnfFVH6t8}.
Note that if $\alpha \in X$ then  $I(\alpha) - I( \{\alpha\}, X )
= I( \{\alpha\}, \Aeul \setminus \Aeul_0 ) - I( \{\alpha\}, X ) 
= I( \{\alpha\}, X^c )$; thus,
\begin{align*}
B &= \sum_{X \in \Pscr} \sum_{\alpha \in X} {\textstyle \big( 1 - \frac1{f(\alpha)} \big) [ I(\alpha) - I( \{\alpha\}, X ) ] } \\
&= \sum_{X \in \Pscr} \sum_{\alpha \in X} {\textstyle \big( 1 - \frac1{f(\alpha)} \big) I( \{\alpha\}, X^c ) }
= \sum_{X \in \Pscr} \Mgoth(\Teul,X)
\end{align*}
and consequently equality \eqref{djbGF09347gfhbSRaDGh6TYHJ93ihfV5y2i20} holds for $\Teul$.
This proves the Theorem.
\end{proof}

\begin{notation}  \label {87654xvbnqctfctdvuhdiu3o9d4efwdgRSDFWe439w8d}
Let $\Teul = (\Veul, \Aeul, \Eeul, f, q, v_0) \in \DTpr$.
For each subset $X$ of $\Aeul \setminus \Aeul_0$, define
\begin{align*}
\Dgoth( \Teul, X )
&= \sum_{\alpha \in X} \frac{\big( 1 - \frac1{f(\alpha)} \big) I(\{\alpha\},X^c) - ( F(\alpha) - F_X( \alpha )) }{2} \\
&= {\textstyle\frac12} \Mgoth(\Teul,X) \, - \, {\textstyle\frac12} \! \sum_{\alpha \in X} ( F(\alpha) - F_X( \alpha )) \\[1mm]
\Ggoth( \Teul, X )
&= \sum_{\alpha \in X} \frac{\big( 1 - \frac1{f(\alpha)} \big) I(\{\alpha\},X^c) + ( F(\alpha) - F_X( \alpha )) }{2} \\
&= {\textstyle\frac12} \Mgoth(\Teul,X) \, + \,  {\textstyle\frac12} \! \sum_{\alpha \in X} ( F(\alpha) - F_X( \alpha )) 
\end{align*}
where  $I(\{\alpha\},X^c)$ and $F(\alpha)$ are computed in $\Teul$, and $F_X(\alpha)$ means $F(\alpha)$ computed in $\Teul_X$.
\end{notation}

\begin{remark}  \label {34eDwctppCldnBpsd9iAsdtazcs2f}
Let the assumptions be as in \ref{87654xvbnqctfctdvuhdiu3o9d4efwdgRSDFWe439w8d}.
\begin{enumerate}

\item If $X \subseteq \Aeul\setminus\Aeul_0$ is such that $f(\alpha)=1$ for all $\alpha \in X$, then 
$$
\Dgoth( \Teul, X ) = 0 = \Ggoth( \Teul, X ) .
$$

\item If $X \subseteq \Aeul\setminus\Aeul_0$ then $\Dgoth( \Teul, X ), \, \Ggoth( \Teul, X ) \in \Integ$.

\item If $\alpha \in \Aeul \setminus \Aeul_0$ then
\begin{align*}
\Dgoth( \Teul, \{\alpha\} ) &= \frac{( f(\alpha) - 1 ) p(\alpha,e_{\alpha}) - F( \alpha ) + | f(\alpha) | }{2} \\[1mm]
\Ggoth( \Teul, \{\alpha\} ) &= \frac{( f(\alpha) - 1 ) p(\alpha,e_{\alpha}) + F( \alpha ) - | f(\alpha) | }{2} \, . 
\end{align*}

\end{enumerate}
Here is the proof of remark (2).  First note that if $\alpha \in X$ then
$I(\{\alpha\},X^c) = I(\{\alpha\},(\Aeul\setminus\Aeul_0)\setminus\{\alpha\}) - I(\{\alpha\},X\setminus\{\alpha\})  = f(\alpha)[ p(\alpha,e_\alpha) - p_X(\alpha,e_\alpha) ]$
where $e_\alpha$ is the unique edge of $\Teul$ incident to $\alpha$,
$p(\alpha,e_\alpha)$ is computed in $\Teul$, and $p_X(\alpha,e_{\alpha})$ means $p(\alpha,e_\alpha)$ computed in $\Teul_X$.
Using the abbreviations $s = f(\alpha)$, $p = p(\alpha,e_\alpha)$ and $p_X=p_X(\alpha,e_{\alpha})$, we see that the
number $\big( 1 - \frac1{f(\alpha)} \big) I(\{\alpha\},X^c) - ( F(\alpha) - F_X( \alpha ))$ is equal to
$ (s-1)(p-p_X) - \gcd(s,p) + \gcd(s,p_X) $, which is even for every choice of $s,p,p_X \in \Integ$.
This shows that  $\Dgoth( \Teul, X ) \in \Integ$.  Since $\Dgoth( \Teul, X ) + \Ggoth( \Teul, X ) = \Mgoth(\Teul,X) \in \Integ$,
this also shows that $\Ggoth( \Teul, X ) \in \Integ$.
\end{remark}

\begin{theorem}  \label {90323673738bbcgcbcnpss899aaw1a}
Let $\Teul \in \DTpr$ and let $\Pscr = \{ X_1, \dots, X_n \}$ be a partition of $\Aeul \setminus \Aeul_0$.

\smallskip

\begin{enumerata}
\setlength{\itemsep}{1mm}

\item $\displaystyle \delta(\Teul) =
\sum\undersum{ \alpha \in \Aeul \setminus \Aeul_0 } \! \delta(\Teul_\alpha) \  - 
\sum\undersum{ \alpha \in \Aeul \setminus \Aeul_0 } \! \Dgoth(\Teul,\{\alpha\}) \  + \  
 {\textstyle\frac12} I(\Aeul \setminus \Aeul_0 , \Aeul \setminus \Aeul_0) $

\item $\displaystyle \delta(\Teul) = \sum_{i=1}^n \delta(\Teul_{X_i}) \  - \  \sum_{i=1}^n \Dgoth(\Teul,X_i) \  + \  \sum_{i<j} I(X_i,X_j)$

\item $\displaystyle g(\Teul) - 1 =
\sum\undersum{ \alpha \in \Aeul \setminus \Aeul_0 } \! (g(\Teul_\alpha)-1) \  - 
\sum\undersum{ \alpha \in \Aeul \setminus \Aeul_0 } \! \Ggoth(\Teul,\{\alpha\}) \  + \  
 {\textstyle\frac12} I(\Aeul \setminus \Aeul_0 , \Aeul \setminus \Aeul_0) $

\item $\displaystyle g(\Teul)-1 = \sum_{i=1}^n (g(\Teul_{X_i})-1) \  - \  \sum_{i=1}^n \Ggoth(\Teul,X_i) \  + \  \sum_{i<j} I(X_i,X_j)$

\end{enumerata}
\end{theorem}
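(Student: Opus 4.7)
The plan is to derive (b) and (d) directly from Theorem~\ref{sfcv073btFbhd8SwAQedfffGnUd21653OLlp743}\eqref{djbGF09347gfhbSRaDGh6TYHJ93ihfV5y2i20}, and then obtain (a) and (c) as the special cases in which every block of $\Pscr$ is a singleton. The key observation is that $\delta$ and $g$ are defined as linear combinations of $M$ and $F$, so once one has matching decomposition formulas for both $M(\Teul)$ and $F(\Teul)$ across the partition $\Pscr$, the corresponding formulas for $\delta(\Teul)$ and $g(\Teul)-1$ follow mechanically.

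Starting from $2\delta(\Teul) = F(\Teul) - M(\Teul)$ and $2(g(\Teul)-1) = -M(\Teul) - F(\Teul)$, I substitute two ingredients. The first is Theorem~\ref{sfcv073btFbhd8SwAQedfffGnUd21653OLlp743}\eqref{djbGF09347gfhbSRaDGh6TYHJ93ihfV5y2i20}: after rewriting its double sum using Remark~\ref{fgfvgertjuIPPWeCdEew8hju834hnf893krfcnfFVH6t8}(2) it reads
\[
M(\Teul) = \sum_{i=1}^n M(\Teul_{X_i}) + \sum_{i=1}^n \Mgoth(\Teul,X_i) - 2\sum_{i<j} I(X_i,X_j).
\]
The second is the elementary identity
\[
F(\Teul) - \sum_{i=1}^n F(\Teul_{X_i}) = \sum_{i=1}^n \sum_{\alpha \in X_i} (F(\alpha) - F_{X_i}(\alpha)),
\]
obtained by unpacking $F(\Teul)=\sum_{\alpha \in \Aeul \setminus \Aeul_0} F(\alpha)$ and $F(\Teul_{X_i})=\sum_{\alpha \in X_i} F_{X_i}(\alpha)$.

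Adding and subtracting these two identities, the $\sum_{i<j} I(X_i,X_j)$ term survives unchanged (with sign $+$) in both cases, while the correction terms bundle into
\[
\tfrac12\!\left(\sum_{\alpha \in X_i}(F(\alpha)-F_{X_i}(\alpha)) - \Mgoth(\Teul,X_i)\right) = -\Dgoth(\Teul,X_i)
\]
for $\delta$ and analogously $-\Ggoth(\Teul,X_i)$ for $g-1$, by Notation~\ref{87654xvbnqctfctdvuhdiu3o9d4efwdgRSDFWe439w8d}. Dividing by $2$ gives (b) and (d) at once. For (a) and (c), I take each $X_i$ to be a singleton $\{\alpha_i\}$: then $\Teul_{X_i}=\Teul_{\alpha_i}$, the tree $\Teul_{\alpha_i}$ has only one non-dead-end arrow so $p_{X_i}(\alpha_i,e_{\alpha_i})=0$ and $F_{X_i}(\alpha_i)=|f(\alpha_i)|$ as in Remark~\ref{34eDwctppCldnBpsd9iAsdtazcs2f}(3), and $\sum_{i<j} I(X_i,X_j) = \tfrac12 I(\Aeul\setminus\Aeul_0,\Aeul\setminus\Aeul_0)$ by Remark~\ref{fgfvgertjuIPPWeCdEew8hju834hnf893krfcnfFVH6t8}.

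There is no real obstacle here: the proof is formal bookkeeping once Theorem~\ref{sfcv073btFbhd8SwAQedfffGnUd21653OLlp743} is available. The only point where I would be careful is tracking the sign asymmetry between $\Dgoth$ and $\Ggoth$, which simply reflects the sign difference between $2\delta = F - M$ and $2(g-1) = -F - M$ in the coefficient of the $\sum_\alpha(F(\alpha) - F_X(\alpha))$ piece.
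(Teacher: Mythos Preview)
Your proposal is correct and follows essentially the same approach as the paper: reduce (a) and (c) to the special cases of (b) and (d) with singleton blocks, then prove (b) and (d) by substituting Theorem~\ref{sfcv073btFbhd8SwAQedfffGnUd21653OLlp743}\eqref{djbGF09347gfhbSRaDGh6TYHJ93ihfV5y2i20} and the decomposition of $F(\Teul)$ into $2\delta(\Teul)=F(\Teul)-M(\Teul)$ and $2(g(\Teul)-1)=-M(\Teul)-F(\Teul)$, and recognizing the resulting correction terms as $-2\Dgoth(\Teul,X_i)$ and $-2\Ggoth(\Teul,X_i)$ from Notation~\ref{87654xvbnqctfctdvuhdiu3o9d4efwdgRSDFWe439w8d}. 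The paper writes out the case of (b) in detail (packaging the correction as a quantity $A$ and showing $A=\sum_i 2\Dgoth(\Teul,X_i)$) and leaves (d) to the reader, but the substance is identical to what you do.
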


\begin{proof}
Assertion (a) (resp.\ (c)) is the special case of assertion (b) (resp.\ (d)) where each $X_i$ has exactly one element;
so it suffices to prove (b) and (d).
We have 
$$
\displaystyle M(\Teul) = \sum_{i=1}^n M( \Teul_{X_i} ) + \sum_{i=1}^n \Mgoth(\Teul,X_i) \ - 2 \sum_{i<j} I(X_i,X_j)
$$
by Thm \ref{sfcv073btFbhd8SwAQedfffGnUd21653OLlp743}(b), which gives the second equality in:
\begin{align*}
2 \delta(\Teul) & = F(\Teul) - M(\Teul) = F(\Teul) - \sum_{i=1}^n M(\Teul_{X_i}) - \sum_{i=1}^n \Mgoth(\Teul,X_i)
+  2 \sum_{i<j} I(X_i,X_j) \\
& = F(\Teul) - \sum_{i=1}^n \big( F(\Teul_{X_i}) - 2 \delta(\Teul_{X_i}) \big) - \sum_{i=1}^n \Mgoth(\Teul,X_i) + 2 \sum_{i<j} I(X_i,X_j) .
\end{align*}
So we have $2 \delta(\Teul) =  2 \sum_{i=1}^n \delta(\Teul_{X_i}) - A + 2 \sum_{i<j} I(X_i,X_j)$, where
\begin{align*}
A &= \sum_{i=1}^n \Mgoth(\Teul,X_i) \ - \ F(\Teul) \ + \ \sum_{i=1}^n F(\Teul_{X_i}) \\
&= \sum_{i=1}^n \Mgoth(\Teul,X_i) \ - \ \sum_{i=1}^n \sum_{\alpha \in X_i} F(\alpha) \ + \ \sum_{i=1}^n \sum_{\alpha \in X_i} F_{X_i}(\alpha) \\
&= \sum_{i=1}^n \left( \Mgoth(\Teul,X_i) \ - \ \sum_{\alpha \in X_i} \big( F(\alpha)  -  F_{X_i}(\alpha) \big) \right)
= \sum_{i=1}^n 2 \Dgoth(\Teul,X_i) .
\end{align*}
We get $2 \delta(\Teul) = 2 \sum_{i=1}^n \delta(\Teul_{X_i}) -  2 \sum_{i=1}^n \Dgoth(\Teul,X_i) + 2 \sum_{i<j} I(X_i,X_j)$,
so (b) follows.

The proof of (d) is similar to that of (b), and is left to the reader.
\end{proof}

\begin{example}  \label {o873646728378uejdwe9eo}
(a) Consider the following element $\Teul$ of $\DTpr$:
\begin{equation*}
\Teul\,:\ \   
\raisebox{-12mm}{\scalebox{.85}{\begin{picture}(94,22)(-9,-16)

\put(30,1){\makebox(0,0)[b]{\tiny $v_0$}}

\put(.7,.5){\makebox(0,0)[bl]{\tiny $-13$}}
\put(15.7,.5){\makebox(0,0)[bl]{\tiny $-3$}}
\put(44,.5){\makebox(0,0)[br]{\tiny $-1$}}
\put(59,.5){\makebox(0,0)[br]{\tiny $-7$}}
\put(74,.5){\makebox(0,0)[br]{\tiny $-24$}}

\multiput(0,0)(15,0){6}{\circle{1}}
\multiput(0.5,0)(15,0){5}{\line(1,0){14}}
\put(15,-12){\circle{1}}
\put(15,-.5){\line(0,-1){11}}
\put(-.5,0){\vector(-1,0){8}}

\put(0,-.5){\vector(0,-1){8}}
\put(.5,-9){\makebox(0,0)[l]{\tiny $(0)$}}
\put(.5,-2){\makebox(0,0)[lt]{\tiny $2$}}

\put(45,-.5){\vector(0,-1){8}}
\put(45.5,-9){\makebox(0,0)[l]{\tiny $(0)$}}
\put(45.5,-2){\makebox(0,0)[lt]{\tiny $2$}}

\put(60,-.5){\vector(0,-1){8}}
\put(60.5,-9){\makebox(0,0)[l]{\tiny $(0)$}}
\put(59.5,-2){\makebox(0,0)[rt]{\tiny $3$}}

\put(75.4472,.2236){\vector(2,1){8}}
\put(75.4472,-.2236){\vector(2,-1){8}}

\put(14.64645,-.35355){\vector(-1,-1){6}}

\put(15.4472,-11.7764){\vector(2,1){8}}
\put(15.4472,-12.2236){\vector(2,-1){8}}

\put(15.5,-2){\makebox(0,0)[lt]{\tiny $2$}}
\put(14.5,-10){\makebox(0,0)[rb]{\tiny $-2$}}

\put(-7,1){\makebox(0,0)[b]{\tiny $\alpha_1$}}
\put(7,-6){\makebox(0,0)[b]{\tiny $\alpha_2$}}
\put(23,-7){\makebox(0,0)[b]{\tiny $\alpha_3$}}
\put(23,-15){\makebox(0,0)[b]{\tiny $\alpha_4$}}
\put(82.5,5){\makebox(0,0)[b]{\tiny $\alpha_5$}}
\put(82.5,-3){\makebox(0,0)[b]{\tiny $\alpha_6$}}

\end{picture}}}
\end{equation*}
Let $X_1 = \{ \alpha_2, \alpha_3, \alpha_4, \alpha_5 \}$ and $X_2 = \{ \alpha_1, \alpha_6 \}$.
Then $\{X_1,X_2\}$ is a partition of $\Aeul \setminus \Aeul_0$ and the trees $\Teul_{X_1}$ and $\Teul_{X_2}$ (see Def.\ \ref{092198c45w6evXgb4r8rhfShbg48}) are:
\begin{gather*}
\Teul_{X_1}\,:\ \ \raisebox{-12mm}{\scalebox{.85}{\begin{picture}(79,22)(-9,-16)
\put(30,1){\makebox(0,0)[b]{\tiny $v_0$}}
\put(15.7,.5){\makebox(0,0)[bl]{\tiny $-3$}}
\put(44,.5){\makebox(0,0)[br]{\tiny $-1$}}
\put(59,.5){\makebox(0,0)[br]{\tiny $-7$}}
\put(74,.5){\makebox(0,0)[br]{\tiny $-24$}}
\multiput(15,0)(15,0){5}{\circle{1}}
\multiput(15.5,0)(15,0){4}{\line(1,0){14}}
\put(15,-12){\circle{1}}
\put(15,-.5){\line(0,-1){11}}
\put(45,-.5){\vector(0,-1){8}}
\put(45.5,-9){\makebox(0,0)[l]{\tiny $(0)$}}
\put(45.5,-2){\makebox(0,0)[lt]{\tiny $2$}}
\put(60,-.5){\vector(0,-1){8}}
\put(60.5,-9){\makebox(0,0)[l]{\tiny $(0)$}}
\put(59.5,-2){\makebox(0,0)[rt]{\tiny $3$}}
\put(75.4472,.2236){\vector(2,1){8}}
\put(14.64645,-.35355){\vector(-1,-1){6}}
\put(15.4472,-11.7764){\vector(2,1){8}}
\put(15.4472,-12.2236){\vector(2,-1){8}}
\put(15.5,-2){\makebox(0,0)[lt]{\tiny $2$}}
\put(14.5,-10){\makebox(0,0)[rb]{\tiny $-2$}}
\put(7,-6){\makebox(0,0)[b]{\tiny $\alpha_2$}}
\put(23,-7){\makebox(0,0)[b]{\tiny $\alpha_3$}}
\put(23,-15){\makebox(0,0)[b]{\tiny $\alpha_4$}}
\put(82.5,5){\makebox(0,0)[b]{\tiny $\alpha_5$}}
\end{picture}}} \\
\Teul_{X_2}\,:\ \  
\raisebox{-12mm}{\scalebox{.85}{\begin{picture}(79,22)(-9,-16)
\put(30,1){\makebox(0,0)[b]{\tiny $v_0$}}
\put(.7,.5){\makebox(0,0)[bl]{\tiny $-13$}}
\put(15.7,.5){\makebox(0,0)[bl]{\tiny $-3$}}
\put(44,.5){\makebox(0,0)[br]{\tiny $-1$}}
\put(59,.5){\makebox(0,0)[br]{\tiny $-7$}}
\put(74,.5){\makebox(0,0)[br]{\tiny $-24$}}
\multiput(0,0)(15,0){6}{\circle{1}}
\multiput(0.5,0)(15,0){5}{\line(1,0){14}}
\put(15,-.5){\vector(0,-1){8}}
\put(15.5,-9){\makebox(0,0)[l]{\tiny $(0)$}}
\put(-.5,0){\vector(-1,0){8}}
\put(0,-.5){\vector(0,-1){8}}
\put(.5,-9){\makebox(0,0)[l]{\tiny $(0)$}}
\put(.5,-2){\makebox(0,0)[lt]{\tiny $2$}}
\put(45,-.5){\vector(0,-1){8}}
\put(45.5,-9){\makebox(0,0)[l]{\tiny $(0)$}}
\put(45.5,-2){\makebox(0,0)[lt]{\tiny $2$}}
\put(60,-.5){\vector(0,-1){8}}
\put(60.5,-9){\makebox(0,0)[l]{\tiny $(0)$}}
\put(59.5,-2){\makebox(0,0)[rt]{\tiny $3$}}
\put(75.4472,-.2236){\vector(2,-1){8}}
\put(15.5,-2){\makebox(0,0)[lt]{\tiny $2$}}
\put(-7,1){\makebox(0,0)[b]{\tiny $\alpha_1$}}
\put(82.5,-3){\makebox(0,0)[b]{\tiny $\alpha_6$}}
\end{picture}}}
\end{gather*}
Prop.\ \ref{9837487239rpejf9d88}(d) asserts that 
\begin{equation}  \label {9876gfuhf2gsnxwderbfrghjinjfh}
g(\Teul)-1 = ( g(\Teul_{X_1}) - 1 ) + ( g(\Teul_{X_2}) - 1 ) + I(X_1,X_2) .
\end{equation}
Let us verify that this is the case.
The following numbers are computed directly from the above pictures:
$$
\begin{array}{ccc}
M(\Teul) = -4, & M(\Teul_{X_1}) = -2, & M(\Teul_{X_2}) = -2 , \\
F(\Teul) = 6, & F(\Teul_{X_1}) = 4, &  F(\Teul_{X_2}) = 2, \\
& \makebox[0mm]{$I(X_1,X_2) = 0$.}
\end{array}
$$
We have $g(\Teul) = \frac12 ( 2 - M(\Teul) - F(\Teul) ) = 0$, and similarly $g(\Teul_{X_1}) = 0$ and $g(\Teul_{X_2}) = 1$. 
So \eqref{9876gfuhf2gsnxwderbfrghjinjfh} is verified in this example.

(b) Consider the following element $\Teul$ of $\DTpr$
(the only difference with part (a) is that the decorations of $\alpha_1$ and $\alpha_6$ are doubled):
\begin{equation*}
\Teul\,:\ \   
\raisebox{-12mm}{\scalebox{.85}{\begin{picture}(97,22)(-12,-16)

\put(30,1){\makebox(0,0)[b]{\tiny $v_0$}}

\put(.7,.5){\makebox(0,0)[bl]{\tiny $-13$}}
\put(15.7,.5){\makebox(0,0)[bl]{\tiny $-3$}}
\put(44,.5){\makebox(0,0)[br]{\tiny $-1$}}
\put(59,.5){\makebox(0,0)[br]{\tiny $-7$}}
\put(74,.5){\makebox(0,0)[br]{\tiny $-24$}}

\multiput(0,0)(15,0){6}{\circle{1}}
\multiput(0.5,0)(15,0){5}{\line(1,0){14}}
\put(15,-12){\circle{1}}
\put(15,-.5){\line(0,-1){11}}
\put(-.5,0){\vector(-1,0){8}}

\put(0,-.5){\vector(0,-1){8}}
\put(.5,-9){\makebox(0,0)[l]{\tiny $(0)$}}
\put(.5,-2){\makebox(0,0)[lt]{\tiny $2$}}

\put(45,-.5){\vector(0,-1){8}}
\put(45.5,-9){\makebox(0,0)[l]{\tiny $(0)$}}
\put(45.5,-2){\makebox(0,0)[lt]{\tiny $2$}}

\put(60,-.5){\vector(0,-1){8}}
\put(60.5,-9){\makebox(0,0)[l]{\tiny $(0)$}}
\put(59.5,-2){\makebox(0,0)[rt]{\tiny $3$}}

\put(75.4472,.2236){\vector(2,1){8}}
\put(75.4472,-.2236){\vector(2,-1){8}}

\put(14.64645,-.35355){\vector(-1,-1){6}}

\put(15.4472,-11.7764){\vector(2,1){8}}
\put(15.4472,-12.2236){\vector(2,-1){8}}

\put(15.5,-2){\makebox(0,0)[lt]{\tiny $2$}}
\put(14.5,-10){\makebox(0,0)[rb]{\tiny $-2$}}

\put(-7,1){\makebox(0,0)[b]{\tiny $\alpha_1$}}
\put(7,-6){\makebox(0,0)[b]{\tiny $\alpha_2$}}
\put(23,-7){\makebox(0,0)[b]{\tiny $\alpha_3$}}
\put(23,-15){\makebox(0,0)[b]{\tiny $\alpha_4$}}
\put(82.5,5){\makebox(0,0)[b]{\tiny $\alpha_5$}}
\put(82.5,-3){\makebox(0,0)[b]{\tiny $\alpha_6$}}

\put(-9,0){\makebox(0,0)[r]{\tiny $(2)$}}
\put(84,-5){\makebox(0,0)[l]{\tiny $(2)$}}

\end{picture}}}
\end{equation*}
As in part (a), we take $X_1 = \{ \alpha_2, \alpha_3, \alpha_4, \alpha_5 \}$ and $X_2 = \{ \alpha_1, \alpha_6 \}$. 
Then $\Teul_{X_1}$ is identical to the $\Teul_{X_1}$ of part (a), and $\Teul_{X_2}$ is as follows:
\begin{equation*}
\Teul_{X_2}\,:\ \  
\raisebox{-12mm}{\scalebox{.85}{\begin{picture}(82,22)(-12,-16)
\put(30,1){\makebox(0,0)[b]{\tiny $v_0$}}
\put(.7,.5){\makebox(0,0)[bl]{\tiny $-13$}}
\put(15.7,.5){\makebox(0,0)[bl]{\tiny $-3$}}
\put(44,.5){\makebox(0,0)[br]{\tiny $-1$}}
\put(59,.5){\makebox(0,0)[br]{\tiny $-7$}}
\put(74,.5){\makebox(0,0)[br]{\tiny $-24$}}
\multiput(0,0)(15,0){6}{\circle{1}}
\multiput(0.5,0)(15,0){5}{\line(1,0){14}}
\put(15,-.5){\vector(0,-1){8}}
\put(15.5,-9){\makebox(0,0)[l]{\tiny $(0)$}}
\put(-.5,0){\vector(-1,0){8}}
\put(0,-.5){\vector(0,-1){8}}
\put(.5,-9){\makebox(0,0)[l]{\tiny $(0)$}}
\put(.5,-2){\makebox(0,0)[lt]{\tiny $2$}}
\put(45,-.5){\vector(0,-1){8}}
\put(45.5,-9){\makebox(0,0)[l]{\tiny $(0)$}}
\put(45.5,-2){\makebox(0,0)[lt]{\tiny $2$}}
\put(60,-.5){\vector(0,-1){8}}
\put(60.5,-9){\makebox(0,0)[l]{\tiny $(0)$}}
\put(59.5,-2){\makebox(0,0)[rt]{\tiny $3$}}
\put(75.4472,-.2236){\vector(2,-1){8}}
\put(15.5,-2){\makebox(0,0)[lt]{\tiny $2$}}
\put(-7,1){\makebox(0,0)[b]{\tiny $\alpha_1$}}
\put(82.5,-3){\makebox(0,0)[b]{\tiny $\alpha_6$}}
\put(-9,0){\makebox(0,0)[r]{\tiny $(2)$}}
\put(84,-5){\makebox(0,0)[l]{\tiny $(2)$}}
\end{picture}}}
\end{equation*}
Thm \ref{90323673738bbcgcbcnpss899aaw1a}(d) asserts that 
\begin{equation}  \label {b--9876gfuhf2gsnxwderbfrghjinjfh}
\displaystyle g(\Teul)-1 = \sum_{i=1}^2 (g(\Teul_{X_i})-1) \  - \  \sum_{i=1}^2 \Ggoth(\Teul,X_i) \  + \  I(X_1,X_2) .
\end{equation}
We verify that this is the case.
Direct calculation from the pictures of $\Teul,\Teul_{X_1},\Teul_{X_2}$ gives:
$$
\begin{array}{ccc}
M(\Teul) = -6, & M(\Teul_{X_1}) = -2, & M(\Teul_{X_2}) = -4, , \\
F(\Teul) = 8, & F(\Teul_{X_1}) = 4, &  F(\Teul_{X_2}) = 4, \\
I(X_1,X_2) = 0, & \Ggoth(\Teul,X_1)=0, & \Ggoth(\Teul,X_2) = 0.
\end{array}
$$
(Note that $\Ggoth(\Teul,X_1)=0$ also follows from Rem.\ \ref{34eDwctppCldnBpsd9iAsdtazcs2f}(1).)
We have $g(\Teul) = \frac12 ( 2 - M(\Teul) - F(\Teul) ) = 0$, and similarly $g(\Teul_{X_1}) = 0$ and $g(\Teul_{X_2}) = 1$. 
So \eqref{b--9876gfuhf2gsnxwderbfrghjinjfh} is verified in this example.
\end{example}

\begin{example}  \label {87652ewtg2s5tsx2qadfvg2xdj3eoo009pokl}
Consider the following element $\Teul$ of $\DTpr$:
\begin{equation*}
\Teul\,:\ \ 
\raisebox{-17mm}{\scalebox{.85}{\begin{picture}(40,38.5)(-12,-6.5)

\put(0,-4.5){\line(0,1){9}}
\put(0,5.5){\line(0,1){9}}
\put(0,15.5){\line(0,1){14}}

\put(-.5,28){\makebox(0,0)[rt]{\tiny $5$}}
\put(-.5,17){\makebox(0,0)[rb]{\tiny $2$}}
\put(-.5,13){\makebox(0,0)[rt]{\tiny $3$}}
\put(-.5,-3){\makebox(0,0)[rb]{\tiny $3$}}
\put(-1,5){\makebox(0,0)[r]{\tiny $v_0$}}

\put(0,-5){\circle{1}}
\put(0,5){\circle{1}}
\put(0,15){\circle{1}}
\put(0,30){\circle{1}}

\put(-.5,-5){\vector(-1,0){8}}
\put(-9,-5){\makebox(0,0)[r]{\tiny $(0)$}}
\put(-2,-5.5){\makebox(0,0)[rt]{\tiny $2$}}

\put(-.5,30){\vector(-1,0){8}}
\put(-9,30){\makebox(0,0)[r]{\tiny $(0)$}}
\put(-2,30.5){\makebox(0,0)[rb]{\tiny $2$}}

\put(.5,-5){\vector(1,0){8}}
\put(9,-5){\makebox(0,0)[l]{\tiny $(3)$}}

\put(.5,30){\vector(1,0){8}}

\put(0.4472,15.2236){\line(2,1){15}}
\put(0.4472,14.7764){\line(2,-1){15}}
\put(16,23){\circle{1}}
\put(16,7){\circle{1}}

\put(16,22.5){\vector(0,-1){8}}
\put(16.5,14){\makebox(0,0)[l]{\tiny $(0)$}}
\put(16.5,21){\makebox(0,0)[lt]{\tiny $2$}}
\put(16.5,23){\vector(1,0){8}}
\put(25,23){\makebox(0,0)[l]{\tiny $(2)$}}

\put(16,6.5){\vector(0,-1){8}}
\put(16.5,-2){\makebox(0,0)[l]{\tiny $(0)$}}
\put(16.5,5){\makebox(0,0)[lt]{\tiny $2$}}
\put(16.5,7){\vector(1,0){8}}

\put(13,22.5){\makebox(0,0)[dr]{\tiny $13$}}
\put(13,8){\makebox(0,0)[tr]{\tiny $15$}}

\put(7,31){\makebox(0,0)[b]{\tiny $\alpha_1$}}
\put(23,24){\makebox(0,0)[b]{\tiny $\alpha_2$}}
\put(23,8){\makebox(0,0)[b]{\tiny $\alpha_3$}}
\put(7,-4){\makebox(0,0)[b]{\tiny $\alpha_4$}}

\end{picture}}}
\end{equation*}
Let $X_1 = \{\alpha_1, \alpha_2\}$ and $X_2 = \{\alpha_3, \alpha_4\}$.
Then $\{ X_1, X_2\}$ is a partition of $\Aeul \setminus \Aeul_0$ and $\Teul_{X_1}, \Teul_{X_2}$ are as follows:
\begin{equation*}
\Teul_{X_1}\,:
\raisebox{-17mm}{\scalebox{.85}{\begin{picture}(40,38.5)(-12,-6.5)

\put(0,5.5){\line(0,1){9}}
\put(0,15.5){\line(0,1){14}}

\put(-.5,28){\makebox(0,0)[rt]{\tiny $5$}}
\put(-.5,17){\makebox(0,0)[rb]{\tiny $2$}}
\put(-.5,13){\makebox(0,0)[rt]{\tiny $3$}}
\put(-1,5){\makebox(0,0)[r]{\tiny $v_0$}}

\put(0,5){\circle{1}}
\put(0,15){\circle{1}}
\put(0,30){\circle{1}}

\put(-.5,30){\vector(-1,0){8}}
\put(-9,30){\makebox(0,0)[r]{\tiny $(0)$}}
\put(-2,30.5){\makebox(0,0)[rb]{\tiny $2$}}

\put(.5,30){\vector(1,0){8}}

\put(0.4472,15.2236){\line(2,1){15}}
\put(16,23){\circle{1}}

\put(16,22.5){\vector(0,-1){8}}
\put(16.5,14){\makebox(0,0)[l]{\tiny $(0)$}}
\put(16.5,21){\makebox(0,0)[lt]{\tiny $2$}}
\put(16.5,23){\vector(1,0){8}}
\put(25,23){\makebox(0,0)[l]{\tiny $(2)$}}

\put(13,22.5){\makebox(0,0)[dr]{\tiny $13$}}

\put(7,31){\makebox(0,0)[b]{\tiny $\alpha_1$}}
\put(23,24){\makebox(0,0)[b]{\tiny $\alpha_2$}}
\end{picture}}}
\qquad\quad
\Teul_{X_2}\,:
\raisebox{-17mm}{\scalebox{.85}{\begin{picture}(40,38.5)(-12,-6.5)

\put(0,-4.5){\line(0,1){9}}
\put(0,5.5){\line(0,1){9}}
\put(0,15.5){\vector(0,1){8}}

\put(0,24.5){\makebox(0,0)[b]{\tiny $(0)$}}

\put(-.5,17){\makebox(0,0)[rb]{\tiny $2$}}
\put(-.5,13){\makebox(0,0)[rt]{\tiny $3$}}
\put(-.5,-3){\makebox(0,0)[rb]{\tiny $3$}}
\put(-1,5){\makebox(0,0)[r]{\tiny $v_0$}}

\put(0,-5){\circle{1}}
\put(0,5){\circle{1}}
\put(0,15){\circle{1}}

\put(-.5,-5){\vector(-1,0){8}}
\put(-9,-5){\makebox(0,0)[r]{\tiny $(0)$}}
\put(-2,-5.5){\makebox(0,0)[rt]{\tiny $2$}}

\put(.5,-5){\vector(1,0){8}}
\put(9,-5){\makebox(0,0)[l]{\tiny $(3)$}}

\put(0.4472,14.7764){\line(2,-1){15}}
\put(16,7){\circle{1}}

\put(16,6.5){\vector(0,-1){8}}
\put(16.5,-2){\makebox(0,0)[l]{\tiny $(0)$}}
\put(16.5,5){\makebox(0,0)[lt]{\tiny $2$}}
\put(16.5,7){\vector(1,0){8}}

\put(13,8){\makebox(0,0)[tr]{\tiny $15$}}

\put(23,8){\makebox(0,0)[b]{\tiny $\alpha_3$}}
\put(7,-4){\makebox(0,0)[b]{\tiny $\alpha_4$}}
\end{picture}}}
\end{equation*}
According to Thm \ref{90323673738bbcgcbcnpss899aaw1a}(b),
\begin{equation}  \label {837yigv3r81uy1202ehe}
\displaystyle \delta(\Teul) = \sum_{i=1}^2 \delta(\Teul_{X_i}) \  - \  \sum_{i=1}^2 \Dgoth(\Teul,X_i) \  + \  I(X_1,X_2) .
\end{equation}
We verify that this is the case.
Direct calculation from the above pictures gives:
$$
\begin{array}{ccc}
M(\Teul) = -273, & M(\Teul_{X_1}) = -69, & M(\Teul_{X_2}) = -52, \\
F(\Teul) = 5, & F(\Teul_{X_1}) = 3, &  F(\Teul_{X_2}) = 2, \\
I(X_1,X_2) = 120, & \Dgoth(\Teul,X_1)=24, & \Dgoth(\Teul,X_2) = 20.
\end{array}
$$
We have $\delta(\Teul) = \frac12( F(\Teul) - M(\Teul) ) = 139$
and similarly $\delta(\Teul_{X_1}) =36$ and $\delta(\Teul_{X_2}) = 27$,
so equality does hold in \eqref{837yigv3r81uy1202ehe}.
\end{example}


\begin{parag}
{\bf Remark for the expert, related to Example \ref{o873646728378uejdwe9eo}.}
Let $s=xy+1$, $p=x^2y+x+1$, $u=s^2+y$. 
Fix $a_0, a_1 \in \Comp$ such that $a_0 \neq0$ and $a_1\neq1$.
For each $t \in \Comp$, define $F_t = p^2u+a_1ps+a_0s+t \in \Comp[x,y]$;
this $F_t$ is the case $n=2$ of the polynomial $F_n^t$ studied in \cite{BartoloCassouLuengo_1994}.
Thm 4 of that paper implies that $F_t$ is irreducible for every $t \in \Comp$.
For each $t \in \Comp$, \cite{BartoloCassouLuengo_1994} determines the Eisenbud-Neumann splice diagram of
the link at infinity of the curve $F_t=0$; let us denote this diagram by $\EN(F_t)$, for convenience.
It is shown that $\EN(F_t)$ is the same for all $t\in\Comp \setminus \{0,b\}$, where  $b = \frac14 (a_1-1)^2$.
Figure 2 (resp.\ 3, 4) of the cited paper displays $\EN(F_t)$ for $t \in \Comp \setminus\{0,b\}$ (resp.\ $\EN(F_0)$, $\EN(F_b)$).
These diagrams can be interpreted as elements of $\DTpr$ by replacing each vertex of valency $1$ other than the root by an arrow decorated by $(0)$.
If we do that, we find that $\EN(F_0)$ and $\EN(F_b)$ are, respectively, the trees $\Teul_{X_1}$ and $\Teul_{X_2}$ in part (a) of Ex.\ \ref{o873646728378uejdwe9eo}.

It is known that, given an irreducible polynomial $P \in \Comp[x,y]$, the genus of the curve ``$P = 0$'' can be computed from the diagram $\EN(P)$.
Moreover, the formula for doing so is consistent with our definition of the genus $g(\Teul)$
of the tree $\Teul$ obtained from $\EN(P)$ by replacing each vertex of valency $1$ other than the root by an arrow decorated by $(0)$.
It follows that $g(\Teul_{X_1}) = 0$ and $g(\Teul_{X_2}) = 1$ (in part (a) of Ex.\ \ref{o873646728378uejdwe9eo}) are the genera of the curves $F_0=0$ and $F_b=0$ respectively. 

The diagrams $\EN( F_0 F_b )$ and $\EN( F_0 F_b^2 )$ are not displayed in \cite{BartoloCassouLuengo_1994},
but one can see that $\EN( F_0 F_b )$ (resp.\ $\EN( F_0 F_b^2 )$) is the tree $\Teul$ of part (a) (resp.\ (b)) of Ex.\ \ref{o873646728378uejdwe9eo}.
Although it does not make sense to speak of the genus of a reducible curve such as $F_0 F_b = 0$ or $F_0 F_b^2 = 0$,
our theory assigns numbers $g(\Teul)$ to the corresponding trees.

Let us also point out that, in part (a) (resp.\ (b)), $I(X_1,X_2)$ is the intersection number of the curves
$F_0=0$ and $F_b=0$ (resp.\ $F_0=0$ and $F_b^2=0$), over all intersection points in $\Comp^2$.
Since these two curves are distinct fibers of a polynomial map $\Comp^2 \to \Comp$, they do not intersect in $\Comp^2$, which is consistent with the fact that $I(X_1,X_2)=0$.
\end{parag}

\bigskip
\bigskip

\begin{parag}
{\bf Remark for the expert, related to Example \ref{87652ewtg2s5tsx2qadfvg2xdj3eoo009pokl}.}
Let
$$
f =
((x^2-y^3)^2+xy^5)^2 \, (x^2-y^5) \, 
((x^2-2y^3)^2+x^5) \, (x^2-y^3)^3
\in \Comp[x,y]
$$
and consider the Eisenbud-Neumann splice diagram of the germ at the origin of the curve $f=0$.
The tree $\Teul$ in Ex.\ \ref{87652ewtg2s5tsx2qadfvg2xdj3eoo009pokl} is obtained from that diagram
by replacing each vertex of valency $1$ other than the root by an arrow decorated by $(0)$.
The trees $\Teul_{X_1}$ and $\Teul_{X_2}$ correspond, respectively, to the germs at the origin of the curves
$$
((x^2-y^3)^2+xy^5)^2(x^2-y^5) =0 \quad \text{and} \quad  ((x^2-2y^3)^2+x^5)(x^2-y^3)^3 =0 
$$
and $I(X_1,X_2)$ is the local intersection number of these two germs.
The example calculates the $\delta$-invariants of these three singularities.
\end{parag}


\section{Decorated rooted trees}

\begin{definition}  \label {9id98g84uhj3jm3m3mffCj38ddRY589}
Let $\Teul = (\Veul,\Aeul,\Eeul,f,q) \in \DT$.
\begin{enumerata}

\item Given a path $\gamma = (w_1, \dots, w_n)$ with $n\ge2$,
let $E^+_\gamma$ denote the set of edges $e$ of $\Teul$ incident to $w_n$ and distinct from $\{w_{n-1},w_n\}$.
We say that $\gamma$ \textit{satisfies} $(+)$ if
$q(e,w_n)\ge1$ for all $e \in E^+_\gamma$ and at most one $e \in E^+_\gamma$ satisfies $q(e,w_n)>1$.

\item An element $x$ of $\Veul \cup \Aeul$ is \textit{central} if it satisfies
\begin{center}
for every $v \in \Veul \setminus \{ x \}$, the path $\gamma_{x,v}$ satisfies $(+)$.
\end{center}

\item We define the notation $\Cent(\Teul) = \setspec{ x \in \Veul \cup \Aeul }{ \text{$x$ is central} }$.

\end{enumerata}
\end{definition}

For instance, $\Cent(\Teul) = \{u,v,\alpha_2\}$ for the tree $\Teul$
of \eqref{dpd99238gve912R2tw}, in paragraph \ref{pco9v0239jd0OiiJq0wjd}.

\begin{definition} \label {jdhbf2i3p9e}
Let $\Teul=(\Veul,\Aeul,\Eeul,f,q) \in \DT$.
A subset $S$ of $\Veul \cup \Aeul$ is {\it connected\/} if every path
$(x_0,\dots,x_n)$ in $\Teul$ that satisfies $x_0,x_n \in S$ also satisfies $x_i \in S$ for all $i$ such that $0<i<n$.
\end{definition}

Note that $\Veul$ is a connected set, in any decorated tree.

\begin{lemma} \label {p0d92iu5gOncmWh63eh38fkj03iru8r}
If $\Teul \in \DT$ then $\Cent(\Teul)$ is connected.
\end{lemma}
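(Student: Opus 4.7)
The plan is to fix $x,y \in \Cent(\Teul)$, write $\gamma_{x,y} = (x_0,\dots,x_n)$ with $x_0=x$, $x_n=y$, and show that every intermediate $x_i$ (with $0<i<n$) lies in $\Cent(\Teul)$. Since $x_i$ has valency at least $2$ (both $\{x_{i-1},x_i\}$ and $\{x_i,x_{i+1}\}$ are incident to it), it is forced to lie in $\Veul$. So I will only need to verify that for every $v \in \Veul \setminus \{x_i\}$ the path $\gamma_{x_i,v}$ satisfies condition $(+)$ of Definition \ref{9id98g84uhj3jm3m3mffCj38ddRY589}.

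The central observation I will exploit is that $(+)$ depends only on the endpoint $v$ and on the edge of the path incident to $v$: if $\gamma = (w_1,\dots,w_m)$ ends at $w_m$, then $E^+_\gamma$ is the set of edges incident to $w_m$ \emph{other than} $\{w_{m-1},w_m\}$, which is determined solely by the final edge. Consequently, two paths ending at $v$ with the same edge at $v$ have identical sets $E^+$, hence satisfy $(+)$ simultaneously.

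The key claim is then: for every $v \in \Veul \setminus \{x_i\}$, the edge of $\gamma_{x_i,v}$ incident to $v$ coincides with the edge of $\gamma_{x,v}$ or of $\gamma_{y,v}$ incident to $v$. To prove it, I use that in a tree the edge of $\gamma_{s,v}$ incident to $v$ is determined by which connected component of $\Teul\setminus\{v\}$ contains $s$, and then split on whether $v$ lies on $\gamma_{x,y}$:
\begin{itemize}
\item If $v$ is not on $\gamma_{x,y}$, then the path $\gamma_{x,y}$ survives in $\Teul\setminus\{v\}$, so $x$, $x_i$, $y$ all lie in the same component of $\Teul\setminus\{v\}$, and the three approaching edges coincide.
\item If $v = x_j$ for some $j \neq i$, say $j<i$, then the subpath $(x_i,x_{i+1},\dots,x_n)$ avoids $x_j$, so $x_i$ and $y$ lie in the same component of $\Teul\setminus\{x_j\}$; hence the approaching edge of $\gamma_{x_i,v}$ equals that of $\gamma_{y,v}$. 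The case $j>i$ is symmetric, using $x$ in place of $y$.
\end{itemize}
In the second case, the indices satisfy $j<n$ (resp.\ $j>0$), so $v \neq y$ (resp.\ $v\neq x$), which is exactly what is needed to quote centrality of $y$ (resp.\ $x$) applied to $v$. In every case, $(+)$ for $\gamma_{x_i,v}$ is inherited from $(+)$ for $\gamma_{x,v}$ or $\gamma_{y,v}$, and $x_i$ is central.

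There is no real obstacle in this argument: once one recognizes that condition $(+)$ is a \emph{local} condition at the endpoint (depending only on the last edge of the path), the result reduces to an elementary tree-combinatorial remark about which side of $v$ the source of a path lies on. The bookkeeping in the case analysis is the only thing to be careful about.
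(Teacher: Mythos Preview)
Your proof is correct and follows essentially the same approach as the paper's. Both arguments hinge on the observation that condition $(+)$ depends only on the last edge of the path, and then show that $\gamma_{x_i,v}$ shares its last edge with either $\gamma_{x,v}$ or $\gamma_{y,v}$; the paper compresses your component-based case analysis into the single remark ``for some $u\in\{x_1,x_n\}$, $x_i$ is in $\gamma_{u,y}$,'' but the content is identical.
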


\begin{proof}
Let $(x_1, \dots, x_n)$ be a path in $\Teul$ such that $n\ge3$ and $x_1, x_n \in \Cent(\Teul)$,
and let $i$ be such that $1<i<n$. To show that $x_{i} \in \Cent(\Teul)$, consider $y \in \Veul \setminus \{x_{i}\}$.
For some $u \in \{x_1,x_n\}$, $x_{i}$ is in the path $\gamma_{u,y}$. Since $u$ is central, $\gamma_{u,y}$ satisfies $(+)$;
since  $\gamma_{u,y}$ and $\gamma_{x_i,y}$ have the same last edge, $\gamma_{x_i,y}$ satisfies $(+)$.
This shows that $x_i \in \Cent(\Teul)$.
\end{proof}

\begin{remark}
In Def.\ \ref{pod293805h4s7b2w9310jdcjd}, we defined what we mean by a root of a decorated tree $\Teul = (\Veul, \Aeul, \Eeul, f, q ) \in \DT$.
We can rephrase that definition as follows:
a {\it root\/} of $\Teul$ is a vertex $v_0 \in \Veul$ that is central and satisfies $q(e,v_0)=1$ for all edges $e$ incident to $v_0$. 
\end{remark}

Recall from Def.\ \ref{pod293805h4s7b2w9310jdcjd} that a \textit{decorated rooted tree} is a tuple $(\Veul, \Aeul, \Eeul, f, q, v_0)$ such that 
$(\Veul, \Aeul, \Eeul, f, q) \in \DT$ and $v_0$ is a root of $(\Veul, \Aeul, \Eeul, f, q)$;
we write $\DTr$ for the set of decorated rooted trees.
If  $\Teul = (\Veul, \Aeul, \Eeul, f, q, v_0) \in \DTr$, then $v_0$ is called \underline{the} root of $\Teul$.

\begin{definition} \label {p0cb2398gfv821wid9bgd6}
Let  $\Teul = (\Veul, \Aeul, \Eeul, f, q, v_0) \in \DTr$.
We define a partial order on the set $\Veul \cup \Aeul$ by stipulating that, given distinct $x,y \in \Veul \cup \Aeul$, 
$$
x < y \iff \text{$x$ is in $\gamma_{v_0, y}$.}
$$
It follows in particular that $v_0 \le y$ for all $y \in \Veul \cup \Aeul$.
\end{definition}

In the context of decorated rooted trees, we often use the symbol ``$v_0$'' without defining it:  it always denotes the root.
We also use the order relation of Def.~\ref{p0cb2398gfv821wid9bgd6} without recalling what it is.

\begin{definition}  \label {87652q1azxcvbcp0oolmnkbrrtgs765731}
We say that a tree $\Teul \in \DT$ \textit{has negative\/} (resp.\ {\it positive, nonzero}) {\it determinants}
if $\det(e)<0$ (resp.\ $\det(e)>0$, $\det(e) \neq 0$)
for every edge $e = \{x,y\}$ such that $x,y \in \Veul$.
\end{definition}

\begin{proposition} \label {detpathnegative}
Let $\Teul \in \DTr$ and $v,w \in \Veul$, and assume that either $v<w$ or $w<v$.
If $\Teul$ has negative (resp.\ positive) determinants then $\det(\gamma_{v,w}) <0$ (resp.\ $\det(\gamma_{v,w}) >0$).
\end{proposition}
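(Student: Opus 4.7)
My plan is: first reduce to the case $v<w$ (the other case follows by reversing $\gamma_{v,w}$, which preserves $\det$), then proceed by induction on the length $n$ of $\gamma_{v,w}$. Write $\gamma=\gamma_{v,w}=(x_0,\ldots,x_n)$ with $x_0=v$ and $x_n=w$. Since $v<w$, the path $\gamma$ is a sub-path of $\gamma_{v_0,w}$, so $x_0<x_1<\cdots<x_n$; in particular every $x_i$ is a vertex (arrows have valency $1$), hence every edge $e_i=\{x_{i-1},x_i\}$ of $\gamma$ lies between two vertices. The base case $n=1$ is immediate from the hypothesis, since $\det(\gamma)=\det(e_1)$.

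For the inductive step with $n\ge2$, set $e_n=\{x_{n-1},x_n\}$ and $\gamma''=(x_0,\ldots,x_{n-1})$. The key step is to establish the recurrence
\begin{equation*}
q(e_n,x_{n-1})\,\det(\gamma)\;=\;q(\gamma,x_0)\,\det(e_n)\;+\;Q(\gamma,x_{n-1})\,Q(e_n,x_n)\,\det(\gamma'').
\end{equation*}
I would verify this by direct expansion of the formula in Def.~\ref{dpc9vin2o309d093detpathd920}, using the identities $Q^*(\gamma)=Q^*(\gamma'')\,Q(\gamma,x_{n-1})$, $Q(\gamma'',x_{n-1})=q(e_n,x_{n-1})\,Q(\gamma,x_{n-1})$, and $Q(e_n,x_{n-1})=q(\gamma'',x_{n-1})\,Q(\gamma,x_{n-1})$; after substituting, the cross-terms cancel and one is left with precisely the identity above.

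The next step is to show that each of the four factors $q(e_n,x_{n-1})$, $q(\gamma,x_0)$, $Q(\gamma,x_{n-1})$, $Q(e_n,x_n)$ is a positive integer ($\ge 1$). In each case the factor is either $1$ (by the empty-product convention or by root condition~(i) of Def.~\ref{pod293805h4s7b2w9310jdcjd} when the relevant vertex is $v_0$), or a product of decorations $q(e,x)$ with $x\neq v_0$ and $e\notin\gamma_{v_0,x}$; the chain $x_0<x_1<\cdots<x_n$ guarantees this non-membership in every case, and root condition~(iii) then gives $q(e,x)\ge1$. The only minor care point is for $q(\gamma,x_0)=q(\{x_0,x_1\},x_0)$: if $x_0=v_0$ use (i); otherwise $\{x_0,x_1\}\notin\gamma_{v_0,x_0}$ because $x_1>x_0$, so (iii) applies. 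Combined with the inductive hypothesis $\det(\gamma'')<0$ (resp.\ $>0$) and the hypothesis $\det(e_n)<0$ (resp.\ $>0$), the recurrence forces $\det(\gamma)<0$ (resp.\ $>0$).

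The main obstacle is spotting and verifying the recurrence relating $\det(\gamma)$ to $\det(e_n)$ and $\det(\gamma'')$; once that identity is in place the proof reduces to a clean sign propagation, the only moderately fiddly part of which is checking the four positivities by a short case analysis on whether $x_0=v_0$.
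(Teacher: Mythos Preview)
Your proof is correct and follows essentially the same strategy as the paper's: induction on the length of the path, using a recurrence that expresses $\det(\gamma)$ as a positive combination of the determinant of one edge and the determinant of a shorter sub-path, together with positivity of the coefficients coming from the root conditions. The only difference is cosmetic: the paper peels off the \emph{first} edge (working with $\gamma'=(x_1,\dots,x_n)$ and $e=\{x_0,x_1\}$, obtaining $\det\gamma=\frac{q}{b}\det\gamma'+\frac{Q_1 Q^*(\gamma')^2 Q'}{b}\det e$), whereas you peel off the \emph{last} edge (working with $\gamma''=(x_0,\dots,x_{n-1})$ and $e_n$); the two recurrences are dual to each other and the sign analysis is the same.
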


\begin{proof}
Write $\gamma = \gamma_{v,w} = (x_0, \dots, x_n)$.
We may assume that $x_0<x_n$. 

Assume that $\Teul$ has negative (resp.\ positive) determinants.
We show that $\det\gamma<0$ (resp.\ $\det\gamma>0$) by induction on $n$. Let $e=\{x_0,x_1\}$.

If $n=1$ then $\det\gamma = \det e < 0$ (resp.\ $\det\gamma = \det e > 0$). 
Assume that $n>1$ and that the result is true for paths shorter than $\gamma$.
Let $\gamma' = (x_1, \dots, x_n)$.  Then $\det(\gamma')<0$ (resp.\ $\det(\gamma')>0$).
We use the following notations:
\begin{gather*}
q = q(\gamma,x_0), \quad a = q(e,x_1), \quad b = q(\{x_1,x_2\},x_1), \quad q' = q(\gamma,x_n), \\
Q = Q(\gamma,x_0), \quad Q_1 = Q(\gamma,x_1), \quad Q' = Q(\gamma,x_n). \\ 
\setlength{\unitlength}{2mm}
\begin{picture}(40,7)(-5,-3.5)
\put(0,0){\circle{1}}
\put(10,0){\circle{1}}
\put(20,0){\makebox(0,0){\footnotesize $\cdots$}}
\put(30,0){\circle{1}}
\put(.5,0){\line(1,0){9}}
\put(10.5,0){\line(1,0){7}}
\put(29.5,0){\line(-1,0){7}}
\put(-.5,.5){\line(-1,1){2}}
\put(-.5,-.5){\line(-1,-1){2}}
\put(9.5,-.5){\line(-1,-1){2}}
\put(10.5,-.5){\line(1,-1){2}}
\put(30.5,.5){\line(1,1){2}}
\put(30.5,-.5){\line(1,-1){2}}
\put(5,-.5){\makebox(0,0)[t]{\tiny $e$}}
\put(0,2){\makebox(0,0)[b]{\tiny $x_0$}}
\put(10,2){\makebox(0,0)[b]{\tiny $x_1$}}
\put(30,2){\makebox(0,0)[b]{\tiny $x_n$}}
\put(2,.5){\makebox(0,0)[b]{\tiny $q$}}
\put(12,.5){\makebox(0,0)[b]{\tiny $b$}}
\put(8,.5){\makebox(0,0)[b]{\tiny $a$}}
\put(28,.5){\makebox(0,0)[b]{\tiny $q'$}}
\put(-1.5,0){\makebox(0,0)[r]{\tiny $Q$}}
\put(31.5,0){\makebox(0,0)[l]{\tiny $Q'$}}
\put(10,-1.5){\makebox(0,0)[t]{\tiny $Q_1$}}
\end{picture}
\end{gather*}
Since $x_0<x_n$, we have
$$
q>0, \quad b>0, \quad Q_1>0, \quad Q'>0 \quad \text{and} \quad Q^*(\gamma')>0 .
$$
Let $D = q a - Q Q_1 b = \det e$ and $D' = q' b  - Q^*(\gamma')^2 a Q_1 Q' = \det \gamma'$ and note that
$D,D'<0$ (resp.\ $D,D'>0$).
Then 
$$
Q = \frac{qa-D}{Q_1 b} \quad \text{and} \quad q' = \frac{D'+Q^*(\gamma')^2 aQ_1Q'}{b} .
$$
Noting that $Q^*(\gamma) = Q_1 Q^*(\gamma')$, we obtain
\begin{multline*}
\det\gamma
= qq'-Q^*(\gamma)^2 Q Q'
= q\left( \frac{D'+Q^*(\gamma')^2 aQ_1Q'}{b} \right) - Q_1^2 Q^*(\gamma')^2 \left( \frac{qa-D}{Q_1 b} \right) Q' \\
= \frac{q (D'+Q^*(\gamma')^2 aQ_1Q')  - Q_1 Q^*(\gamma')^2 ( qa-D )Q'}{b}
\textstyle = \left(\frac qb\right)  D' + \left(\frac{Q_1 Q^*(\gamma')^2 Q'}b\right) D ,
\end{multline*}
where $\frac qb > 0$ and $\frac{Q_1 Q^*(\gamma')^2 Q'}b > 0$. 
It follows that $\det\gamma<0$ if $\Teul$ has negative determinants, and that $\det\gamma>0$ if $\Teul$ has positive determinants.
\end{proof}

\begin{proposition}  \label {nouveau61324d1g3r}
Suppose that  $\Teul = (\Veul, \Aeul, \Eeul, f, q, v_0) \in\DTr$ has negative determinants and satisfies $f(\Aeul) \subseteq \Nat$,
consider $v,v' \in \Veul$ such that $v<v'$,
and suppose that the path $\gamma = \gamma_{v,v'}$ is linear. 
\begin{enumerata}

\item We have $q(\gamma,v)>0$, $Q(\gamma,v')>0$, and
$\left| \begin{smallmatrix} q(\gamma,v) & Q(\gamma,v') \\ N_v & N_{v'} \end{smallmatrix} \right| \le 0$. 

\item If there exists $\alpha \in \Aeul\setminus\Aeul_0$ such that $\alpha > v'$,
then $\left| \begin{smallmatrix} q(\gamma,v) & Q(\gamma,v') \\ N_v & N_{v'} \end{smallmatrix} \right| < 0$.

\end{enumerata}
\end{proposition}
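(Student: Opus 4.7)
The plan is to apply Proposition~\ref{kuwdhr12778} to reduce everything to understanding the sign of $\det(\gamma)\,p(v,e)$, where $e$ is the first edge of $\gamma$ (i.e., the edge joining $v$ to its neighbor $x_1$ in $\gamma$). Proposition~\ref{kuwdhr12778} gives
$\left|\begin{smallmatrix} q(\gamma,v) & Q(\gamma,v') \\ N_v & N_{v'} \end{smallmatrix}\right| = \det(\gamma)\,p(v,e)$,
so once the signs of $\det(\gamma)$ and $p(v,e)$ are pinned down, both (a) and (b) follow. Since $\Teul$ has negative determinants and $v<v'$, Proposition~\ref{detpathnegative} immediately yields $\det(\gamma)<0$.

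Next I would verify $q(\gamma,v)>0$ and $Q(\gamma,v')>0$ directly from the definition of a root (Def.~\ref{pod293805h4s7b2w9310jdcjd}). For $q(\gamma,v)$: if $v=v_0$ use condition (i); otherwise the edge $e=\{v,x_1\}$ lies on $\gamma_{v_0,v'}$ and not on $\gamma_{v_0,v}$ (because $v<x_1$), so (iii) gives $q(e,v)\ge1$. For $Q(\gamma,v')$: every edge $e'$ incident to $v'$ but not in $\gamma$ fails to lie in $\gamma_{v_0,v'}$, since the unique edge of $\gamma_{v_0,v'}$ at $v'$ is the last edge of $\gamma$; hence (iii) gives $q(e',v')\ge1$, and the product $Q(\gamma,v')$ is $\ge1$ (empty product if $v'$ has valency~$1$).

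The main nontrivial piece is showing $p(v,e)\ge0$, which occupies the rest of the argument. Recall $p(v,e)=\sum_{\alpha\in\Aeul(v,e)}\hat x_{v,\alpha}$, where every $\alpha$ appearing is strictly greater than $v$ (because $e$ lies on $\gamma_{v,\alpha}$ and $x_1>v$). For such $\alpha$, each factor $q(\epsilon,\gamma_{v,\alpha})$ arising in $\hat x_{v,\alpha}$ corresponds to an edge $\epsilon$ meeting $\gamma_{v,\alpha}$ at some vertex $u\neq v$; so $u>v$. A short tree-argument shows $\epsilon\notin\gamma_{v_0,u}$ (otherwise $\epsilon$ would lie in $\gamma_{v_0,v}$ or in $\gamma_{v,\alpha}$, both contradictions), so root condition (iii) forces $q(\epsilon,u)\ge1$. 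Combined with $f(\alpha)\ge1$ (since $f(\Aeul)\subseteq\Nat$ and $\alpha\notin\Aeul_0$), this gives $\hat x_{v,\alpha}>0$; hence $p(v,e)\ge0$, with equality iff $\Aeul(v,e)=\emptyset$. Multiplying by $\det(\gamma)<0$ proves (a). For (b), note that any $\alpha\in\Aeul\setminus\Aeul_0$ with $\alpha>v'$ satisfies $\gamma_{v,\alpha}\supset\gamma_{v,v'}\ni e$, hence $\alpha\in\Aeul(v,e)$; so $p(v,e)>0$ and the determinant is strictly negative. The only step requiring any care is the verification that the edges $\epsilon$ contributing to $\hat x_{v,\alpha}$ satisfy $q(\epsilon,u)\ge1$—a matter of correctly applying the rooted-tree condition (iii) to the right vertex.
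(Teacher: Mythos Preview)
Your proof is correct and follows essentially the same approach as the paper: apply Prop.~\ref{kuwdhr12778} to get $\det(\gamma)\,p(v,e)$, invoke Prop.~\ref{detpathnegative} for $\det(\gamma)<0$, and show $p(v,e)\ge0$ (with strict inequality when some $\alpha>v'$ exists) by checking that each $\hat x_{v,\alpha}$ is positive via the root condition~(iii) and $f(\Aeul)\subseteq\Nat$. The paper is terser---it writes $\hat x_{v,\alpha}=Q^*(\gamma_{v,\alpha})f(\alpha)$ and declares $q(\gamma,v)>0$, $Q(\gamma,v')>0$, $Q^*(\gamma_{v,\alpha})>0$ ``clear''---whereas you spell out why each edge decoration in question is $\ge1$; but the substance is the same.
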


\begin{proof}
(See Figure~\ref{83473yruer938d}.)
Let $e$ denote the unique edge of $\gamma$ incident to $v$ and note that
\begin{equation}  \label {9128d3bgd5rniouuv63d7eau73t1}
\left| \begin{smallmatrix} q(\gamma,v) & Q(\gamma,v') \\ N_v & N_{v'} \end{smallmatrix} \right| = \det(\gamma) p(v,e) 
\end{equation}
by Prop.\ \ref{kuwdhr12778}.  Also, $q(\gamma,v)>0$ and $Q(\gamma,v')>0$ are clear, and $\det(\gamma) < 0$ by Prop.~\ref{detpathnegative}.
The fact that $v<v'$ implies that, for each $\alpha \in \Aeul(v,e)$, we have $v < \alpha$ and hence $Q^*(\gamma_{v,\alpha})>0$;
the assumption $f(\Aeul) \subseteq \Nat$ implies that $f(\alpha)>0$ for all  $\alpha \in \Aeul(v,e)$;
so  $\hat x_{v,\alpha} = Q^*(\gamma_{v,\alpha}) f(\alpha) > 0$ for all  $\alpha \in \Aeul(v,e)$.
Thus,  $p(v,e) = \sum_{\alpha \in \Aeul(v,e)} \hat x_{v,\alpha} \ge 0$,  and if $\Aeul(v,e) \neq \emptyset$ then $p(v,e) > 0$.
We have $\Aeul(v,e) = \setspec{ \alpha \in \Aeul\setminus\Aeul_0 }{ \alpha > v' }$ because $\gamma$ is linear,
so the desired conclusion follows from \eqref{9128d3bgd5rniouuv63d7eau73t1}.
\end{proof}

We generalize Remark 2.5 of \cite{CND15:simples}.
See Def.\ \ref{jdhbf2i3p9e} for connectedness.

\begin{corollary} \label {nouveaui875863urj1dpi}
Suppose that  $\Teul = (\Veul, \Aeul, \Eeul, f, q, v_0) \in\DTr$ has negative determinants and satisfies $f(\Aeul) \subseteq \Nat$.
\begin{enumerata}

\item If $v,v' \in \Veul$ satisfy $v < v'$, then the following hold:
\begin{enumerata}

\item If $N_v < 0$ then $N_{v'}<0$.
\item If $N_v \le 0$ then $N_{v'} \le 0$.
\item If $N_v \le 0$ and there exists $\alpha \in \Aeul\setminus\Aeul_0$ such that $\alpha > v'$, then $N_{v'} < 0$.

\end{enumerata}

\item The set $\setspec{ v \in \Veul }{ N_v \ge 0}$ is connected.

\item The set $\setspec{ v \in \Veul }{ N_v > 0}$ is connected.

\end{enumerata}
\end{corollary}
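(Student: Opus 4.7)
The plan is to reduce part (a) to a repeated application of Proposition~\ref{nouveau61324d1g3r}, after decomposing the path $\gamma_{v,v'}$ into linear pieces; parts (b) and (c) will then follow from a standard least-common-ancestor decomposition of an arbitrary path in the rooted tree.

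First I would handle (a). Given $v < v'$ in $\Veul$, write $\gamma_{v,v'} = (y_0, y_1, \dots, y_n)$ with $v = y_0 < y_1 < \cdots < y_n = v'$, and let $0 = i_0 < i_1 < \cdots < i_k = n$ be the strictly increasing sequence of indices consisting of $0$, $n$, and every $j$ with $0 < j < n$ satisfying $\delta_{y_j} \neq 2$. Set $z_\ell = y_{i_\ell}$; by construction each path $\gamma_{z_{\ell-1}, z_\ell}$ is linear and $z_0 = v < z_1 < \cdots < z_k = v'$. Applying Proposition~\ref{nouveau61324d1g3r}(a) to each segment gives
\begin{equation*}
q_\ell\, N_{z_\ell} \le Q_\ell\, N_{z_{\ell-1}} \qquad (1 \le \ell \le k)
\end{equation*}
with $q_\ell = q(\gamma_{z_{\ell-1},z_\ell}, z_{\ell-1}) > 0$ and $Q_\ell = Q(\gamma_{z_{\ell-1},z_\ell}, z_\ell) > 0$. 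A straightforward induction on $\ell$ proves (a)(i): if $N_v < 0$, this inequality forces $N_{z_\ell} < 0$ for every $\ell$, so in particular $N_{v'} < 0$. The same induction with non-strict inequalities proves (a)(ii). For (a)(iii), the hypothesis that some $\alpha \in \Aeul\setminus\Aeul_0$ satisfies $\alpha > v'$ applies verbatim to the last segment $\gamma_{z_{k-1}, v'}$, so Proposition~\ref{nouveau61324d1g3r}(b) upgrades the final inequality to the strict one $q_k N_{v'} < Q_k N_{z_{k-1}}$; combined with $N_{z_{k-1}} \le 0$ from (a)(ii) and $q_k, Q_k > 0$, this yields $N_{v'} < 0$.

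For (b) and (c), I would consider a path $(x_0, \dots, x_n)$ in $\Teul$ with $x_0, x_n$ in the set $S$ in question. Since arrows have valency $1$, the endpoints being vertices forces every $x_i \in \Veul$. In the rooted tree $\Teul$ there exists an index $j$ with $x_0 > x_1 > \cdots > x_j < x_{j+1} < \cdots < x_n$ (the $j$ is determined by the least common ancestor of $x_0$ and $x_n$ in the partial order of Definition~\ref{p0cb2398gfv821wid9bgd6}). For any $i$ with $0 < i < n$, either $i \le j$, in which case $x_i < x_0$, or $i \ge j$, in which case $x_i < x_n$. To prove (b), suppose some such $x_i$ lies outside $S = \setspec{v \in \Veul}{N_v \ge 0}$, so $N_{x_i} < 0$; then (a)(i) applied to the appropriate strict inequality $x_i < x_0$ or $x_i < x_n$ forces $N_{x_0} < 0$ or $N_{x_n} < 0$, contradicting the assumption $x_0, x_n \in S$. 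The argument for (c) is identical but uses (a)(ii) to conclude $N_{x_0} \le 0$ or $N_{x_n} \le 0$ from $N_{x_i} \le 0$, contradicting $x_0, x_n \in \setspec{v \in \Veul}{N_v > 0}$.

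The main technical content is already packaged into Proposition~\ref{nouveau61324d1g3r}, so the obstacles here are essentially organizational. The one place that requires a little care is (a)(iii): one has to notice that the existence of an arrow $\alpha > v'$ transfers to the final linear segment $\gamma_{z_{k-1}, v'}$ without needing any adjustment, since the hypothesis depends only on the endpoint $v'$ shared by $\gamma_{v,v'}$ and $\gamma_{z_{k-1},v'}$.
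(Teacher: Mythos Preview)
Your proof is correct and follows essentially the same approach as the paper: reduce (a) to Proposition~\ref{nouveau61324d1g3r} by walking along $\gamma_{v,v'}$, then derive (b) and (c) from (a-i) and (a-ii) via the least-common-ancestor decomposition of a path. The only cosmetic difference is that the paper applies Proposition~\ref{nouveau61324d1g3r} edge by edge (each edge is trivially a linear path), whereas you decompose into maximal linear segments; both work and the inductive structure is the same.
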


\begin{proof}
By Prop.\ \ref{nouveau61324d1g3r}, (a) is true whenever $\{v,v'\}$ is an edge; it follows that (a) is true in general. 
Assertions (b) and (c) follow from (a-i) and (a-ii), respectively.
\end{proof}

\section{Genus formula}

\begin{notation}  \label {liuyc34t7Hi87tgebfdcnmG1Yk4i37tg467km}
Let $\Teul = (\Veul,\Aeul,\Eeul,f,q) \in \DT$.
\begin{enumerata}

\item Given a path $\gamma = (x_0, \dots, x_n)$ such that $n\ge1$, define $\phi(\gamma)$ to be the product of the numbers $q(e,u)$ such that
$u \in \{x_1, \dots, x_n\}$, $e$ is an edge incident to $u$ and $e$ is not in $\gamma$.

\item Given $x,y \in \Veul \cup \Aeul$ such that $x \neq y$, define $\phi(x,y) = \phi( \gamma_{x,y} )$.

\item Given a pair $(x,e)$ such that $x \in \Veul \cup \Aeul$ and $e$ is an edge incident to $x$, define
$Y(x,e) = \setspec{ y \in \Veul \cup \Aeul_0 }{ \text{$e$ is in $\gamma_{x,y}$} }$.

\end{enumerata}
\end{notation}

\begin{definition}
Let $\Teul = (\Veul,\Aeul,\Eeul,f,q,v_0) \in \DTr$ and consider a pair $(x,e)$ such that $x \in \Veul \cup \Aeul$ and $e$ is an edge incident to $x$.
Define $x'$ by the condition $e = \{x,x'\}$ and note that either $x < x'$ or $x>x'$.
If $x<x'$ (resp. $x>x'$), we say that $(x,e)$ is an {\it out-pair\/} (resp.\ an {\it in-pair\/}) of $\Teul$.
\end{definition}

\begin{lemma}  \label {873gsDXNHty76fdXCVBNi2378462b7w8jse2}
Let $\Teul = (\Veul,\Aeul,\Eeul,f,q,v_0) \in \DTr$ be such that $f(\alpha) \in \{ 0, 1 \}$ for all $\alpha \in \Aeul$.
The following holds for each out-pair $(x,e)$ of $\Teul$:
$$
p(x,e) = 1 + \sum_{y \in Y(x,e)} \phi(x,y) (\delta_y - 2) .
$$
\end{lemma}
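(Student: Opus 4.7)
The plan is to proceed by induction on the size of the subtree $T$ consisting of $x'$ (the other endpoint of $e$; note $x<x'$ since $(x,e)$ is an out-pair) together with all its descendants. The key preliminary observation is that, since $f(\alpha)=1$ for every $\alpha \in \Aeul \setminus \Aeul_0$, the definitions immediately give $\hat x_{x,\alpha} = \phi(x,\alpha)$, so that $p(x,e) = \sum_{\alpha \in \Aeul(x,e)} \phi(x,\alpha)$. Throughout, I will write $R(x,e)$ for the right-hand side of the claimed identity.

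For the base case I would take $x'$ a leaf, i.e., $\delta_{x'}=1$, and handle the three subcases directly. If $x' \in \Aeul\setminus\Aeul_0$ then $\Aeul(x,e)=\{x'\}$ and $Y(x,e)=\emptyset$, so both sides equal $1$. If $x' \in \Aeul_0$ then $\Aeul(x,e)=\emptyset$, $Y(x,e)=\{x'\}$ and $\phi(x,x')=1$, so both sides equal $1+(\delta_{x'}-2)=0$. If $x' \in \Veul$ with $\delta_{x'}=1$ the verification is identical to the previous subcase.

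For the inductive step, $\delta_{x'}\ge 2$, and necessarily $x' \in \Veul$. Let $e_1, \dots, e_k$ be the edges at $x'$ other than $e$ (so $k=\delta_{x'}-1\ge 1$), write $e_i=\{x',y_i\}$, $a_i = q(e_i,x')$, and put $A=\prod_i a_i$. Each $(x',e_i)$ is an out-pair whose associated subtree is strictly smaller than $T$, so the inductive hypothesis gives $p(x',e_i)=R(x',e_i)$. A direct inspection of the definition of $\phi$ shows that for every $y$ in the subtree past $e_i$ one has $\phi(x,y) = \bigl(\prod_{j\ne i} a_j\bigr)\phi(x',y)$, because the path $\gamma_{x,y}$ is obtained from $\gamma_{x',y}$ by prepending $x$, which contributes exactly the decorations of the edges at $x'$ not lying on the new path, namely the $e_j$ with $j\ne i$. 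Using this together with $\phi(x,x')=A$ and splitting $Y(x,e) = \{x'\}\sqcup\bigsqcup_i Y(x',e_i)$, both sides can be rewritten in terms of the $p(x',e_i)$, and the difference $R(x,e)-p(x,e)$ collapses to
$$1 + A(k-1) - \sum_{i=1}^k \prod_{j\ne i} a_j.$$

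The hard part of the proof, and the only place where the rootedness of $\Teul$ is used, is to show that this expression vanishes. Since $v_0 \le x < x'$ we have $x' \ne v_0$, and the edges $e_1, \dots, e_k$ are exactly the edges incident to $x'$ that are not in $\gamma_{v_0,x'}$ (as $e$ is the last edge of that path). Condition (ii) in the definition of a root (Def.~\ref{pod293805h4s7b2w9310jdcjd}) therefore forces at most one of $a_1, \dots, a_k$ to differ from $1$; a trivial check in the two resulting cases (all $a_j=1$, giving $1+(k-1)=k=\sum_i 1$; or a single $a_j = a$ with the rest equal to $1$, giving $1+a(k-1)=1+(k-1)a$) confirms the identity and closes the induction. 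The main obstacle is essentially this algebraic cancellation, which would fail without the ``at most one special decoration at each non-root vertex'' condition built into $\DTr$.
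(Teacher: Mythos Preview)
Your proof is correct and follows essentially the same route as the paper's: induction on the size of the subtree beyond $e$, the same leaf base cases, and the same decomposition of $p(x,e)$ and of $Y(x,e)$ according to the edges at $x'$. The only cosmetic difference is that the paper invokes the root condition at the beginning of the inductive step (relabelling so that $q(e_i,x')=1$ for $i>1$ and writing $b=q(e_1,x')$) and then computes directly, whereas you carry the general $a_i$'s through and use the root condition at the end to kill the residual $1+A(k-1)-\sum_i\prod_{j\ne i}a_j$; the underlying computation is identical.
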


\begin{proof}
Let $x'$ be such that $e = \{x,x'\}$, let $\bar Y(x,e) = \setspec{ y \in \Veul \cup \Aeul }{ \text{$e$ is in $\gamma_{x,y}$} }$ and note that $x' \in \bar Y(x,e)$.
We proceed by induction on $| \bar Y(x,e) |$.

If $| \bar Y(x,e) | = 1$ then there are two cases: either $x' \in \Veul \cup \Aeul_0$ or $x' \in \Aeul \setminus \Aeul_0$.
In the first case we have $p(x,e)=0$ and  $\sum_{y \in Y(x,e)} \phi(x,y) (\delta_y - 2) = \phi(x,x')(\delta_{x'}-2) = 1(1-2)=-1$, so the result is valid.
In the second case we have $p(x,e) = f(x')=1$ (because $\image(f) \subseteq \{0,1\}$)
and $Y(x,e) = \emptyset$, so $\sum_{y \in Y(x,e)} \phi(x,y) (\delta_y - 2) = 0$, so the result is valid.
Hence, the claim is true whenever $| \bar Y(x,e) | = 1$.

Let $N>1$ be such that the claim is true for all out-pairs $(x,e)$ such that $| \bar Y(x,e) | < N$. 
Suppose that $(x,e)$ is an out-pair such that $| \bar Y(x,e) | = N$.
Let $x_1, \dots, x_n$ ($n \ge 1$) be the distinct elements of $(\Veul \cup \Aeul) \setminus \{x\}$ which are adjacent to $x'$
and consider the edges $e_i = \{ x', x_i \}$ ($1 \le i \le n$).
Since $(x,e)$ is an out-pair of $\Teul$, so are $(x',e_1), \dots, (x',e_n)$.
It is clear that (for each $i$) $| \bar Y(x',e_i) | < N$,
so the inductive hypothesis implies that (for each $i$) $p(x',e_i) = 1 + \sum_{y \in Y(x',e_i)} \phi(x',y) (\delta_y - 2)$.
Since $(x,e)$ is an out-pair of $\Teul$, at most one $i \in \{1,\dots,n\}$ is such that $q(e_i,x') \neq 1$;
so we may assume that the labelling of $x_1, \dots, x_n$ is such that $q(e_i,x') = 1$ for all $i>1$. Let $b = q(e_1,x')$. Then
\begin{align*}
p(x,e) &= p(x',e_1) + b \sum_{i=2}^n p(x',e_i) \\
&=\left[ 1 + \sum_{ y \in Y(x',e_1) } \phi(x',y)(\delta_y - 2) \right] + b \sum_{i=2}^n \left[ 1 + \sum_{ y \in Y(x',e_i) } \phi(x',y)(\delta_y - 2) \right]
\end{align*}
We have $\phi(x,y) = \phi(x',y)$ for all $y \in Y(x',e_1)$, and if $i>1$, $\phi(x,y) = b \phi(x',y)$ for all $y \in Y(x',e_i)$.
Thus,
$$
p(x,e) = 1 + b(n-1) + \sum_{y \in Y(x,e) \setminus \{x'\}} \phi(x,y) (\delta_y - 2) \, .
$$
Since $\phi(x,x') = b$ and $\delta_{x'} = n+1$, we get $p(x,e) = 1 + \sum_{y \in Y(x,e) } \phi(x,y) (\delta_y - 2)$.
\end{proof}

\begin{smallremark}
It is easy to see that the converse of Lemma \ref{873gsDXNHty76fdXCVBNi2378462b7w8jse2} is true, i.e.,
if $p(x,e) = 1 + \sum_{y \in Y(x,e)} \phi(x,y) (\delta_y - 2)$ for every out-pair $(x,e)$ of $\Teul \in \DTr$
then $f(\alpha) \in \{0,1\}$ for every $\alpha \in \Aeul$.
\end{smallremark}

\begin{lemma}  \label {8273ygv4r87gfo124d40239ur}
Let $\Teul = (\Veul,\Aeul,\Eeul,f,q) \in \DT$ and let $\eta \in \Cent(\Teul)$. 
There exists a unique tree $\Teul^{(\eta)} \in \DT$ which satisfies the following conditions {\rm(i--v)}.
\begin{enumerati}

\item $\Teul^{(\eta)} = (\Veul,\Aeul,\Eeul,f,q^*)$ for some $q^*$.

\item $q^*(e,\eta) = q(e,\eta)$ for all edges $e$ incident to $\eta$.

\item $q^*(e,v) = q(e,v)$ for all  $v \in \Veul \setminus \{ \eta \}$ and all edges $e$ incident to $v$ and not in $\gamma_{\eta,v}$.

\item If $e = \{ \eta, v\}$ is an edge and $v \in \Veul$ then $q^*(e,v) = Q(e,v) - q(e,v)$. 

\item If $e = \{ u,v \}$ is an edge and $u,v \in \Veul \setminus\{\eta\}$ then $\det^*(e) = -\det(e)$, where $\det(e)$ is computed in $\Teul$ and $\det^*(e)$ in $\Teul^{(\eta)}$.

\end{enumerati}
Moreover, $\Teul^{(\eta)}$ has the following property:
\begin{enumerati}
\item[\rm(vi)] If $v \in \Veul \setminus \{ \eta \}$ and $e$ is the unique edge  in $\gamma_{\eta,v}$ which is incident to $v$, then
$Q(e,v)$ divides $q(e,v) + q^*(e,v)$.
\end{enumerati}
\end{lemma}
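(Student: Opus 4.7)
The plan is to construct $q^*$ explicitly, proceeding outward from $\eta$, and to verify property (vi) simultaneously by induction on distance from $\eta$. First I identify where $q^*$ can possibly differ from $q$: condition (ii) fixes $q^*(e, \eta) = q(e, \eta)$; the arrow axiom in Def.\ \ref{cJioI0qobfoWvvnxzhb83edfAiE7oqnkl} forces $q^*(e, \alpha) = 1 = q(e, \alpha)$ at every arrow endpoint; and condition (iii) gives $q^*(e, v) = q(e, v)$ whenever $v \in \Veul \setminus \{\eta\}$ and $e$ is not in $\gamma_{\eta, v}$. Hence the only free values are $q^*(e_v, v)$ for $v \in \Veul \setminus \{\eta\}$, where $e_v$ denotes the unique edge of $\gamma_{\eta, v}$ incident to $v$; uniqueness of $\Teul^{(\eta)}$ will follow once I show that the remaining conditions (iv) and (v) pin these values down.

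I define $q^*(e_v, v)$ by induction on the length $n$ of $\gamma_{\eta, v}$. For $n = 1$, condition (iv) directly gives the integer $q^*(e_v, v) = Q(e_v, v) - q(e_v, v)$, and the identity $q^*(e_v, v) + q(e_v, v) = Q(e_v, v)$ establishes (vi) at $v$. For $n \ge 2$, write $e_v = \{u, v\}$ so that $u \in \Veul \setminus \{\eta\}$ has already been treated. Using $q^*(e_v, u) = q(e_v, u)$ and $Q^*(e_v, v) = Q(e_v, v)$ (both by (iii)), condition (v) rearranges to
\[
q(e_v, u)\,[q^*(e_v, v) + q(e_v, v)] = [Q^*(e_v, u) + Q(e_v, u)]\,Q(e_v, v).
\]
Let $A$ be the product of $q(e', u)$ over edges $e'$ incident to $u$ other than $e_v$ and $e_u$. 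Then $Q(e_v, u) = q(e_u, u)\,A$ and $Q^*(e_v, u) = q^*(e_u, u)\,A$, and since $Q(e_u, u) = q(e_v, u)\,A$, the inductive hypothesis (vi) at $u$ supplies an integer $K_u$ with $q(e_u, u) + q^*(e_u, u) = K_u\,q(e_v, u)\,A$. Substituting yields
\[
q^*(e_v, v) + q(e_v, v) = K_u\,A^2\,Q(e_v, v),
\]
which both defines $q^*(e_v, v) \in \Integ$ and establishes (vi) at $v$. Centrality of $\eta$ enters through the $(+)$ condition on $\gamma_{\eta, u}$, which guarantees $q(e_v, u) \ge 1$, so condition (v) has a nonzero leading coefficient and the recursion genuinely determines a unique value.

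Finally, I check $\Teul^{(\eta)} \in \DT$. Only the coprimality axiom of Def.\ \ref{cJioI0qobfoWvvnxzhb83edfAiE7oqnkl} is at stake, and only at vertices $v \in \Veul \setminus \{\eta\}$ where $q^*(e_v, v)$ has been altered. For any other edge $e'$ incident to such a $v$, property (vi) of the lemma gives $q^*(e_v, v) \equiv -q(e_v, v) \pmod{Q(e_v, v)}$, and since $q(e', v) \mid Q(e_v, v)$ this congruence passes to $q(e', v)$; hence $\gcd(q^*(e_v, v), q(e', v)) = \gcd(q(e_v, v), q(e', v)) = 1$ by the $\gcd$ axiom in $\Teul$. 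The main obstacle is arranging the induction so that property (vi) at the parent $u$ delivers precisely the divisibility $q(e_v, u)\,A \mid q(e_u, u) + q^*(e_u, u)$ needed to make $q^*(e_v, v)$ integer-valued while simultaneously propagating (vi) to $v$; once the recursion $K_v = K_u\,A^2$ is recognized, both integrality and property (vi) unfold in lockstep.
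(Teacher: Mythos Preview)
Your proof is correct and follows essentially the same approach as the paper's: both argue by induction on distance from $\eta$, derive the same recursion for $q^*(e_v,v)+q(e_v,v)$ (your $K_u A^2\,Q(e_v,v)$ is exactly the paper's $Q_v\,(Q_{\bar v}/y_v)^2\,(x_{\bar v}+x^*_{\bar v})/Q_{\bar v}$), simultaneously establish (vi) to guarantee integrality at each step, and deduce the coprimality axiom from (vi). The only cosmetic difference is notation.
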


\begin{proof}
For each $v \in \Veul \setminus \{ \eta \}$, let $e_v$ be the unique edge in $\gamma_{\eta,v}$ which is incident to $v$
and define $\bar v$ by the condition $e_v = \{ v, \bar v \}$
(so $v \mapsto \bar v$ is a set map from $\Veul \setminus \{ \eta \}$ to $\Veul \cup \{ \eta \}$).
We also define, for each $v \in \Veul \setminus \{ \eta \}$,
$$
x_v = q(e_v, v), \quad
y_v = q(e_v, \bar v) = q^*(e_v, \bar v) , \quad Q_v = Q(e_v, v) = Q^*(e_v, v) .
$$
Observe:
\begin{equation} \label {87tD237654drChv82ehdbB4za2qw}
\text{for all $v \in \Veul \setminus \{\eta\}$ not adjacent to $\eta$ we have $y_v>0$, $Q_{\bar v}>0$ and $y_v \mid Q_{\bar v}$.}
\end{equation}
Indeed, if  $v \in \Veul \setminus \{\eta\}$ is not adjacent to $\eta$ then $\bar v \in \Veul \setminus \{\eta\}$,
so the fact that $\eta \in \Cent(\Teul)$ implies that $y_v = q(e_v, \bar v) > 0$ and $Q_{\bar v} > 0$, and it is clear that $y_v \mid Q_{\bar v}$.
So \eqref{87tD237654drChv82ehdbB4za2qw} is true.

To complete the definition of $q^*$, it suffices to specify the value of $q^*(e_v,v)$ for each $v \in \Veul \setminus \{ \eta \}$.
We say that a family of integers $\big( x_v^* \big)_{ v \in \Veul \setminus \{ \eta \} }$ is ``good'' if conditions (iv) and (v) are satisfied 
when we set $q^*(e_v,v) = x_v^*$ for all $v \in \Veul \setminus \{ \eta \}$.
One can see that conditions (iv) and (v) are equivalent to
\begin{align}
x_v + x_v^* &=  Q_v \ \ \text{for all $v \in \Veul \setminus \{ \eta \}$ adjacent to $\eta$,} \\
\label {87u4yrhflmknbvg1f5q34wg5y} x_v + x_v^* &= Q_v \left( \frac{Q_{\bar v}}{y_v} \right)^2  \left( \frac{x_{\bar v} + x^*_{\bar v}}{Q_{\bar v}} \right)
\ \ \text{for all $v \in \Veul \setminus \{ \eta \}$ not adjacent to $\eta$.}
\end{align}
By \eqref{87tD237654drChv82ehdbB4za2qw} and \eqref{87u4yrhflmknbvg1f5q34wg5y}, if $x^*_{\bar v} \in \Integ$ and $Q_{\bar v} \mid (x_{\bar v} + x^*_{\bar v})$
then $x^*_v \in \Integ$ and  $Q_{v} \mid (x_{v} + x^*_{v})$.
It follows by induction that there is a unique good family $\big( x_v^* \big)_{ v \in \Veul \setminus \{ \eta \} }$,
and that this family is such that $Q_v \mid (x_v + x^*_v)$ for all $v \in \Veul \setminus \{ \eta \}$.
Since $\gcd(x_v,Q_v)=1$ and $Q_v \mid (x_v + x^*_v)$, we have $\gcd(x^*_v,Q_v)=1$.
This guarantees that if we set $q^*(e_v,v) = x_v^*$ for each  $v \in \Veul \setminus \{ \eta \}$ then,
in the tree $\Teul^{(\eta)} = (\Veul,\Aeul,\Eeul,f,q^*)$,
the edge decorations near any given vertex are pairwise relatively prime; so $\Teul^{(\eta)} \in \DT$.
This proves the lemma.
\end{proof}

\begin{definition} \label {987654xcrv1tyd2ewdpomjlhkj98v7165f2exrzq4f}
Let $\Teul = (\Veul,\Aeul,\Eeul,f,q, v_0) \in \DTr$ be such that $\delta_{v_0} > 0$,
let $v_1, \dots, v_n$ ($n\ge1$) be the distinct elements of $\Veul \cup \Aeul$ that are adjacent to $v_0$,
and let $e_i = \{v_0,v_i\}$ for $i=1,\dots,n$.  We define $\Teul_1, \dots, \Teul_n \in \DT$ as follows.

Delete the edges $e_1,\dots,e_n$ from $\Teul$ and (for each $i \in \{1,\dots,n\}$)
let $\Ceul_i \in \DT$ be the connected component that contains $v_i$.  Let $\Teul_i'$ be the tree obtained from $\Ceul_i$ by:
\begin{itemize}
\item adding an arrow $\alpha_i$ decorated by $(0)$ and an edge $\epsilon_i = \{\alpha_i,v_i\}$;
\item letting the decorations of $\epsilon_i$ near $v_i$ and $\alpha_i$ be  $q(e_i,v_i)$ and $1$, respectively.
\end{itemize}
Since $\Teul_i' \in \DT$ and $\alpha_i \in \Cent( \Teul_i' )$, we may consider the tree $(\Teul_i')^{(\alpha_i)} \in \DT$ defined in Lemma \ref{8273ygv4r87gfo124d40239ur}.
For each $i \in \{1,\dots,n\}$, we define 
$$
\Teul_i = (\Teul_i')^{(\alpha_i)} .
$$
When considering $\Teul_1,\dots,\Teul_n$, we shall use the following notation.
We write $\Teul_i = (\Veul_i,\Aeul_i,\Eeul_i,f_i,q_i)$ and $\Aeul_{0,i} = \setspec{ \alpha \in \Aeul_i }{ f_i(\alpha) = 0 }$.
We write $Q(e,x)$, $p(x,e)$, $\det(e)$ and $N_x$ when these quantities are computed in $\Teul$, and $Q_i(e,x)$, $p_i(x,e)$, $\det_i(e)$ and $N_x^{(i)}$ when computed in $\Teul_i$.
We use the symbols $v_i, e_i, \alpha_i, \epsilon_i$ (defined in the above paragraph) without recalling their definition.
\end{definition}

\begin{notation}  \label {987625q8e9j2dnc627j2q938euh}
We define the {\it degree\/} of a rooted decorated tree $\Teul \in \DTr$ by
$$
\deg(\Teul) = N_{v_0} \, ,
$$
where (as always) $v_0$ is the root of $\Teul$.
\end{notation}

\begin{lemma}  \label {6ft2xewmlkijp2n098h7h6zxcv4rt3dnjt56hajcgj}
Let $\Teul = (\Veul,\Aeul,\Eeul,f,q, v_0) \in \DTr$ and $n = \delta_{v_0}$.
Assume that $n>0$ and consider $\Teul_i = (\Veul_i,\Aeul_i,\Eeul_i,f_i,q_i) \in \DT$ $(1 \le i \le n)$ as in Def.\ \ref{987654xcrv1tyd2ewdpomjlhkj98v7165f2exrzq4f}.
For each  $x \in (\Veul \setminus \{v_0\}) \cup \Aeul_0$ we have
$$
N_x + N_x^{(i)} = \phi(v_0,x) \deg(\Teul) ,
$$
where $i$ is the unique element of $\{1,\dots,n\}$ such that $x \in \Veul_i \cup ( \Aeul_{0,i} \setminus \{\alpha_i\} )$.
\end{lemma}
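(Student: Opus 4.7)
I would prove the identity by induction on the distance of $x$ from $v_i$ in $\Ceul_i$, pairing Proposition~\ref{kuwdhr12778} in $\Teul$ with the same proposition in $\Teul_i$ and exploiting the sign-flip of determinants from Lemma~\ref{8273ygv4r87gfo124d40239ur}(v).

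For the base case $x=v_i$ (assume $v_i\in\Veul$; the case $v_i\in\Aeul_0$ is degenerate, with $N_{v_i}^{(i)}=0$ since $\Teul_i$ then has no nonzero arrow, and Remark~\ref{es5s6xjc17lcy4l} giving $N_{v_i}=N_{v_0}=\deg(\Teul)$ while $\phi(v_0,v_i)=1$), apply Prop.~\ref{kuwdhr12778} to $\gamma=(v_0,v_i)$ in $\Teul$: since $q(e_i,v_0)=Q(e_i,v_0)=1$, this gives
\begin{equation*}
N_{v_i}=Q(e_i,v_i)N_{v_0}+(q(e_i,v_i)-Q(e_i,v_i))\,p(v_0,e_i),
\end{equation*}
and analogously applied to $\gamma=(\alpha_i,v_i)$ in $\Teul_i$, using Lemma~\ref{8273ygv4r87gfo124d40239ur}(iv) which yields $\det_i(\epsilon_i)=-q(e_i,v_i)$, gives
\begin{equation*}
N_{v_i}^{(i)}=Q(e_i,v_i)N^{(i)}_{\alpha_i}-q(e_i,v_i)\,p_i(\alpha_i,\epsilon_i).
\end{equation*}
Unwinding the definitions of $\hat x$ shows that $p(v_0,e_i)$, $N^{(i)}_{\alpha_i}$, and $p_i(\alpha_i,\epsilon_i)$ all equal $\sum_{\alpha\in\Ceul_i\cap(\Aeul\setminus\Aeul_0)}f(\alpha)\phi(v_0,\alpha)$, because the $(\alpha_i)$-transformation touches only up-edge decorations and leaves all sibling contributions intact. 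Since $N_{v_0}=\deg(\Teul)$, adding the two displays makes this common sum cancel and leaves $N_{v_i}+N_{v_i}^{(i)}=Q(e_i,v_i)\deg(\Teul)=\phi(v_0,v_i)\deg(\Teul)$.

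For the inductive step, let $x\neq v_i$, let $\bar x$ be its parent on $\gamma_{v_i,x}$, and set $e=\{\bar x,x\}$. Proposition~\ref{kuwdhr12778} applied to $(\bar x,x)$ in both trees gives
\begin{align*}
q(e,\bar x)\,N_x - Q(e,x)\,N_{\bar x} &= \det(e)\,p(\bar x,e),\\
q_i(e,\bar x)\,N_x^{(i)} - Q_i(e,x)\,N_{\bar x}^{(i)} &= \det_i(e)\,p_i(\bar x,e).
\end{align*}
Four equalities make the right-hand sides cancel upon addition. First, $q_i(e,\bar x)=q(e,\bar x)$ and $Q_i(e,x)=Q(e,x)$ follow from the locality of Lemma~\ref{8273ygv4r87gfo124d40239ur}(iii), because $e$ is not the up-edge at $\bar x$ and the only modification at $x$ is on $e$ itself, which is excluded from $Q_i(e,x)$. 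Second, $p_i(\bar x,e)=p(\bar x,e)$: both sums range over arrows of $\Aeul\setminus\Aeul_0$ strictly below $x$, and for any such $\alpha$ every modified decoration along $\gamma_{\bar x,\alpha}$ sits on the path itself (hence is discarded in $\hat x$), so the two $\hat x$-values coincide. Third, $\det_i(e)=-\det(e)$ is Lemma~\ref{8273ygv4r87gfo124d40239ur}(v) when $x\in\Veul$; when $x\in\Aeul_0$ the determinant terms vanish anyway, since $e$ is then a dead end and $p(\bar x,e)=p_i(\bar x,e)=0$. Addition now yields $q(e,\bar x)(N_x+N_x^{(i)})=Q(e,x)(N_{\bar x}+N_{\bar x}^{(i)})$, and combining the inductive hypothesis with the elementary factorization $\phi(v_0,x)=(Q(e,x)/q(e,\bar x))\phi(v_0,\bar x)$---read off Notation~\ref{liuyc34t7Hi87tgebfdcnmG1Yk4i37tg467km} by observing that extending $\gamma_{v_0,\bar x}$ by $e$ strips the factor $q(e,\bar x)$ from the contribution at $\bar x$ and appends $Q(e,x)$ at $x$---closes the induction.

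The hardest part is not the induction but the verification of the four equalities linking $\Teul$ and $\Teul_i$, the most delicate being $p_i(\bar x,e)=p(\bar x,e)$. This demands pinning down that, for any arrow $\alpha$ strictly below $x$, every decoration the $(\alpha_i)$-transformation alters along $\gamma_{\bar x,\alpha}$ sits on an edge of that path, so the sibling decorations---the only ones entering $\hat x$---remain unchanged; once this locality is in hand, the sign-flip supplied by Lemma~\ref{8273ygv4r87gfo124d40239ur}(v) is precisely what lines the two instances of Prop.~\ref{kuwdhr12778} up for cancellation.
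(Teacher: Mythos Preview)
Your argument is correct and follows essentially the same route as the paper: induction on the length of $\gamma_{v_0,x}$, with the inductive step pairing two instances of Prop.~\ref{kuwdhr12778} and invoking the sign-flip $\det_i(e)=-\det(e)$ from Lemma~\ref{8273ygv4r87gfo124d40239ur}(v) (or $p(\bar x,e)=0$ when $x\in\Aeul_0$) so that the right-hand sides cancel upon addition. The only difference is cosmetic: for the base case $x=v_i\in\Veul$, the paper reaches $N_{v_i}$ and $N_{v_i}^{(i)}$ directly via Rem.~\ref{52retfxcxntgBmA98WjmHkkj498736762} rather than through Prop.~\ref{kuwdhr12778}, which makes the cancellation appear a line sooner; your route through Prop.~\ref{kuwdhr12778} with $\det_i(\epsilon_i)=-q(e_i,v_i)$ and the identification $p(v_0,e_i)=N_{\alpha_i}^{(i)}=p_i(\alpha_i,\epsilon_i)$ is slightly longer but equally valid.
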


\begin{proof}
Consider the case where $x$ is adjacent to $v_0$. 
There are two subcases: $x \in \Aeul_0$ or $x \in \Veul$.
The subcase $x \in \Aeul_0$ is as follows:
$$
{\begin{picture}(36,6)(-12,-3)
\put(-8,0){\makebox(0,0)[r]{$\Teul:$}}
\put(0,0){\circle{1}}
\put(0.5,0){\vector(1,0){19.5}}
\put(-.4472,.2236){\line(-2,1){4}}
\put(-.4472,-.2236){\line(-2,-1){4}}
\put(21,0){\makebox(0,0)[l]{\tiny $(0)$}}
\put(0,1){\makebox(0,0)[b]{\tiny $v_0$}}
\put(19,1){\makebox(0,0)[b]{\tiny $x$}}
\put(10,-.7){\makebox(0,0)[t]{\tiny $e_i$}}
\end{picture}}
\qquad\quad
{\begin{picture}(36,6)(-12,-3)
\put(-8,0){\makebox(0,0)[r]{$\Teul_i:$}}
\put(10,0){\vector(1,0){10}}
\put(10,0){\vector(-1,0){10}}
\put(-1,0){\makebox(0,0)[r]{\tiny $(0)$}}
\put(21,0){\makebox(0,0)[l]{\tiny $(0)$}}
\put(1,1){\makebox(0,0)[b]{\tiny $\alpha_i$}}
\put(19,1){\makebox(0,0)[b]{\tiny $x$}}
\put(10,-.7){\makebox(0,0)[t]{\tiny $\epsilon_i$}}
\end{picture}}
$$
so $N_x = p(x,e_i)$ and $N^{(i)}_x = 0$, so $N_x + N^{(i)}_x = p(x,e_i) = \deg(\Teul) = \phi(v_0,x) \deg(\Teul)$, i.e., the result is valid in this case.
In the second subcase ($x \in \Veul$) we have: 
$$
{\begin{picture}(36,6)(-12,-3)
\put(-8,0){\makebox(0,0)[r]{$\Teul:$}}
\put(0,0){\circle{1}}
\put(20,0){\circle{1}}
\put(0.5,0){\line(1,0){19}}
\put(-.4472,.2236){\line(-2,1){4}}
\put(-.4472,-.2236){\line(-2,-1){4}}
\put(20.4472,.2236){\line(2,1){4}}
\put(20.4472,-.2236){\line(2,-1){4}}
\put(0,1){\makebox(0,0)[b]{\tiny $v_0$}}
\put(20,1){\makebox(0,0)[b]{\tiny $x$}}
\put(10,-.7){\makebox(0,0)[t]{\tiny $e_i$}}
\put(17,1){\makebox(0,0)[br]{\tiny $t$}}
\put(23,0){\makebox(0,0)[l]{\tiny $T$}}
\end{picture}}
\qquad\quad
{\begin{picture}(36,6)(-12,-3)
\put(-8,0){\makebox(0,0)[r]{$\Teul_i:$}}
\put(20,0){\circle{1}}
\put(10,0){\line(1,0){9.5}}
\put(10,0){\vector(-1,0){10}}
\put(20.4472,.2236){\line(2,1){4}}
\put(20.4472,-.2236){\line(2,-1){4}}
\put(-1,0){\makebox(0,0)[r]{\tiny $(0)$}}
\put(1,1){\makebox(0,0)[b]{\tiny $\alpha_i$}}
\put(20,1){\makebox(0,0)[b]{\tiny $x$}}
\put(10,-.7){\makebox(0,0)[t]{\tiny $\epsilon_i$}}
\put(17,1){\makebox(0,0)[br]{\tiny $T-t$}}
\end{picture}}
$$
Let $t = q(e_i,x)$ and $T = Q(e_i,x)$, and note that $q_i(\epsilon_i,x) = T-t$.
Rem.\ \ref{52retfxcxntgBmA98WjmHkkj498736762} gives
$N_x = t p(v_0,e_i) + T p(x,e_i)$ and $N^{(i)}_x = (T-t) p_i(\alpha_i,\epsilon_i) = (T-t) p(v_0,e_i)$, 
so $N_x + N^{(i)}_x = T p(v_0,e_i) + T p(x,e_i) = T \deg(\Teul) = \phi(v_0,x) \deg(\Teul)$.

So the result is valid when $x$ is adjacent to $v_0$.

From now-on, assume that $x$ is not adjacent to $v_0$.
Let $e = \{u,x\}$ be the unique edge in $\gamma_{v_0,x}$ which is incident to $x$.
Note that $u$ belongs to $\Veul \setminus\{v_0\}$ and $\Veul_i \setminus\{\alpha_i\}$.
Proceeding by induction on the length of $\gamma_{v_0,x}$, we may assume that $N_{u} + N^{(i)}_{u} = \phi( v_0 , u ) \deg(\Teul)$.
Define
$$
b = q(e,u) = q_i(e,u) \quad \text{and} \quad B = Q(e,x) = Q_i(e,x) 
$$
and note that $b>0$, $B>0$ and $\phi(v_0,x) = (B/b) \phi(v_0,u)$.
Prop.\ \ref{kuwdhr12778} gives
$$
\left| \begin{smallmatrix} b & B \\ N_u & N_x \end{smallmatrix} \right| = \det(e) p(u,e)
\quad \text{and} \quad
\left| \begin{smallmatrix} b & B \\ N^{(i)}_u & N^{(i)}_x \end{smallmatrix} \right| = \textstyle \det_i(e) p_i(u,e).
$$
Since $p(u,e) = p_i(u,e)$, it follows that 
\begin{equation}  \label {8v7y2wmlkojqCe0fsqwdfxdtlgvk7v6g2x}
\textstyle
\left| \begin{smallmatrix} b & B \\ N_u+N^{(i)}_u & N_x+N^{(i)}_x \end{smallmatrix} \right| = (\det(e) + \det_i(e)) p(u,e) .
\end{equation}
Note that if $x \in \Veul$ then $\det(e) + \det_i(e) = 0$, and if $x \in \Aeul_0$ then $p(u,e)=0$;
so the right-hand-side of \eqref{8v7y2wmlkojqCe0fsqwdfxdtlgvk7v6g2x} is $0$.
This implies that 
$$
N_x + N^{(i)}_x = (B/b) (N_u+N^{(i)}_u) = (B/b) \phi(v_0,u) \deg(\Teul) = \phi(v_0,x) \deg(\Teul) .
$$
\end{proof}

\begin{theorem}  \label {FormuleDuGenre}
Let $\Teul = (\Veul,\Aeul,\Eeul,f,q, v_0) \in \DTr$ be such that $f(\alpha) \in \{0,1\}$ for all $\alpha \in \Aeul$.
Let $n = \delta_{v_0}$, assume that $n\ge1$,
and consider $\Teul_1, \dots, \Teul_n \in \DT$ as in Def.\ \ref{987654xcrv1tyd2ewdpomjlhkj98v7165f2exrzq4f}.
Let $d = \deg(\Teul)$. Then
$$
g(\Teul) =  \frac{(d-1)(d-2)}2 - \sum_{i=1}^n \delta(\Teul_i) \, .
$$
\end{theorem}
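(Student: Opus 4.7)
The strategy is to unwind the definitions of $g(\Teul)$ and $\delta(\Teul_i)$ and reduce the statement to a clean identity on multiplicities. Since $f(\Aeul)\subseteq\{0,1\}$, Remark \ref{DdjkfblowicblwFT983764t1} gives $F(\Teul)=|\Aeul\setminus\Aeul_0|$, and because each added arrow $\alpha_i$ lies in $\Aeul_{0,i}$, likewise $F(\Teul_i)=|(\Aeul\setminus\Aeul_0)\cap\Ceul_i|$; in particular $\sum_i F(\Teul_i)=F(\Teul)$. Multiplying the theorem's identity by $2$ and using the definitions $g(\Teul)=\frac{1}{2}(2-M(\Teul)-F(\Teul))$ and $\delta(\Teul_i)=\frac{1}{2}(F(\Teul_i)-M(\Teul_i))$, the content of the theorem becomes
\[
M(\Teul)+\sum_{i=1}^n M(\Teul_i)\;=\;2-(d-1)(d-2)\;=\;d(3-d).
\]

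Next I would expand the left-hand side cell by cell. Set $W=(\Veul\setminus\{v_0\})\cup\Aeul_0$. In $M(\Teul)$ the root contributes $-N_{v_0}(n-2)$ and $W$ contributes $-\sum_{x\in W}N_x(\delta_x-2)$. In $M(\Teul_i)$, the added $\alpha_i$ (with $\delta_{\alpha_i}^{(i)}=1$) contributes $N_{\alpha_i}^{(i)}$, while the pieces $(\Veul_i\cup\Aeul_{0,i})\setminus\{\alpha_i\}$, on which the valencies are unchanged, partition $W$ as $i$ varies. Applying Lemma \ref{6ft2xewmlkijp2n098h7h6zxcv4rt3dnjt56hajcgj} to substitute $N_x^{(i(x))}=\phi(v_0,x)\,d-N_x$ makes the two copies of $\sum_{x\in W}N_x(\delta_x-2)$ cancel, leaving
\[
M(\Teul)+\sum_i M(\Teul_i)\;=\;-N_{v_0}(n-2)+\sum_{i=1}^n N_{\alpha_i}^{(i)} -d\sum_{x\in W}\phi(v_0,x)(\delta_x-2).
\]

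I would then tame the two remaining sums. The third is handled by Lemma \ref{873gsDXNHty76fdXCVBNi2378462b7w8jse2}: summing $p(v_0,e_i)=1+\sum_{y\in Y(v_0,e_i)}\phi(v_0,y)(\delta_y-2)$ over $i$ and using that the sets $Y(v_0,e_i)$ partition $W$ yields $\sum_{x\in W}\phi(v_0,x)(\delta_x-2)=\sum_i p(v_0,e_i)-n$. Because $v_0$ is a root, the side edges $e_j$ at $v_0$ each carry decoration $1$, so $x_{v_0,\alpha}=\hat{x}_{v_0,\alpha}$ for every non-zero arrow $\alpha$; partitioning the non-zero arrows according to the branch $\Ceul_i$ that contains them then gives $\sum_i p(v_0,e_i)=N_{v_0}$. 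Finally $N_{v_0}=d$ follows (as already used in the proof of Lemma \ref{6ft2xewmlkijp2n098h7h6zxcv4rt3dnjt56hajcgj}) from Remark \ref{52retfxcxntgBmA98WjmHkkj498736762} applied to any $e_i$ at the root, which gives $N_{v_0}=p(v_0,e_i)+p(v_i,e_i)=\deg(\Teul)$.

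The main obstacle, as I see it, is the identity $N_{\alpha_i}^{(i)}=p(v_0,e_i)$, which has to be checked branch by branch. By Remark \ref{52retfxcxntgBmA98WjmHkkj498736762} applied to $\epsilon_i$ in $\Teul_i$ one has $N_{\alpha_i}^{(i)}=p_i(\alpha_i,\epsilon_i)=\sum_{\alpha}\hat{x}^{(i)}_{\alpha_i,\alpha}$, with $\alpha$ running over the non-zero arrows of $\Ceul_i$. I would show, summand by summand, that $\hat{x}^{(i)}_{\alpha_i,\alpha}=\hat{x}_{v_0,\alpha}$: the paths $\gamma^{i}_{\alpha_i,\alpha}$ in $\Teul_i$ and $\gamma_{v_0,\alpha}$ in $\Teul$ agree from $v_i$ onwards; the side-edge decorations at every intermediate vertex of $\gamma_{v_i,\alpha}$ are preserved by the $(\alpha_i)$-construction, since Lemma \ref{8273ygv4r87gfo124d40239ur}(iii) leaves these edges untouched; and the contributions at the ``endpoint vertices'' $v_0$ (in $\Teul$) and $v_i$ (in $\Teul_i$) match, because the extra side edges at $v_0$ all carry decoration $1$ while at $v_i$ the edge $\epsilon_i$ is excluded from $\hat{E}_i$. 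Summing over $i$ then gives $\sum_i N_{\alpha_i}^{(i)}=N_{v_0}=d$. Substituting back collapses the displayed identity to $-d(n-2)+d-d(d-n)=3d-d^2=d(3-d)$, which is exactly what was needed.
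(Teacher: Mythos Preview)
Your proof is correct and follows essentially the same route as the paper's: both reduce the genus formula to the multiplicity identity $M(\Teul)+\sum_i M(\Teul_i)=d(3-d)$, then evaluate it by combining Lemma~\ref{6ft2xewmlkijp2n098h7h6zxcv4rt3dnjt56hajcgj} (to cancel the $N_x$ terms), Lemma~\ref{873gsDXNHty76fdXCVBNi2378462b7w8jse2} (for the $\phi(v_0,x)(\delta_x-2)$ sum), and the identity $N_{\alpha_i}^{(i)}=p_i(\alpha_i,\epsilon_i)=p(v_0,e_i)$. The only differences are cosmetic: the paper works branch by branch while you aggregate over $W$ first, and you spell out the justification of $p_i(\alpha_i,\epsilon_i)=p(v_0,e_i)$ that the paper asserts in one line.
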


\begin{proof}
It suffices to show that $2g(\Teul) + \sum_{i=1}^n 2\delta(\Teul_i) = (d-1)(d-2)$.
We have $g(\Teul) = \frac12 ( 2 - M(\Teul) - F(\Teul) )$ and $\delta(\Teul_i) = \frac12 ( F(\Teul_i) - M(\Teul_i) )$ by definition,
and the hypothesis $\image(f) \subseteq \{0,1\}$ implies that $F(\Teul) = \sum_{i=1}^n F(\Teul_i)$;
so $2g(\Teul) + \sum_{i=1}^n 2\delta(\Teul_i) = 2 - M(\Teul) - \sum_{i=1}^n M(\Teul_i)$
and it suffices to show that
\begin{equation} \label {98g32uef64ewlkokpoi09iuyhmkloir9d09yt}
2 - M(\Teul) - \sum_{i=1}^n M(\Teul_i) = (d-1)(d-2) .
\end{equation}
Let $X_i = \Veul_i \cup (\Aeul_{i,0} \setminus \{ \alpha_i \})$ and note that $(\Veul \cup \Aeul_0) \setminus \{v_0\} = \bigcup_{i=1}^n X_i$ is a disjoint union.
Since $N_{v_0}(\delta_{v_0}-2) = d(n-2)$, we have
\begin{equation} \label {8h7y5dSE45ybnb5or9j38cg7trd2H}
- M(\Teul) = \sum_{x \in \Veul \cup \Aeul_0} N_x (\delta_x - 2) = d(n-2) +\sum_{i=1}^n \sum_{x \in X_i} N_x (\delta_x - 2) .
\end{equation}
Let $i \in \{1,\dots,n\}$.
For each $x \in X_i$ we have $N_x = \phi(v_0,x)d - N^{(i)}_x$ by Lemma \ref{6ft2xewmlkijp2n098h7h6zxcv4rt3dnjt56hajcgj}, so
\begin{align} \label {Atvyb782923tfVg5hn9H6oiPik665789n}
\sum_{x \in X_i} N_x (\delta_x - 2) &= \sum_{x \in X_i} (\phi(v_0,x)d - N^{(i)}_x) (\delta_x - 2) \\
\notag &= d \sum_{x \in X_i} \phi(v_0,x) (\delta_x - 2) - \sum_{x \in X_i} N^{(i)}_x (\delta_x - 2) .
\end{align}
Let $d_i = p(v_0,e_i)$ and note that $X_i = Y(v_0,e_i)$
(see Def.\ \ref{987654xcrv1tyd2ewdpomjlhkj98v7165f2exrzq4f} for $e_i$ and Notation \ref{liuyc34t7Hi87tgebfdcnmG1Yk4i37tg467km} for $Y(v_0,e_i)$);
we get $\sum_{x \in X_i} \phi(v_0,x) (\delta_x - 2) = p(v_0,e_i) - 1 = d_i-1$ by Lemma \ref{873gsDXNHty76fdXCVBNi2378462b7w8jse2}.
Also, $ - \sum_{x \in X_i} N^{(i)}_x (\delta_x - 2)
= M(\Teul_i) + N^{(i)}_{\alpha_i}(\delta_{\alpha_i}-2)
= M(\Teul_i) - N^{(i)}_{\alpha_i}
= M(\Teul_i) - p_i(\alpha_i,\epsilon_i)
= M(\Teul_i) - p(v_0,e_i)
= M(\Teul_i) - d_i$.
Substituting these values in \eqref{Atvyb782923tfVg5hn9H6oiPik665789n} gives $\sum_{x \in X_i} N_x (\delta_x - 2) = d(d_i-1) + M(\Teul_i) - d_i$.
Substituting this in \eqref{8h7y5dSE45ybnb5or9j38cg7trd2H} and noting that $\sum_{i=1}^n d_i = d$ gives
$$
- M(\Teul) =  d(n-2) + \sum_{i=1}^n \big[ d(d_i-1) + M(\Teul_i) - d_i \big] =  d^2-3d + \sum_{i=1}^n M(\Teul_i) .
$$
It follows that \eqref{98g32uef64ewlkokpoi09iuyhmkloir9d09yt} is true, and this proves the Theorem.
\end{proof}


\end{document}